\renewcommand{\baselinestretch}{1.1} 
\numberwithin{equation}{section}
\numberwithin{equation}{subsection}
\newtheorem{theorem}{Theorem}[subsection]
\newtheorem{lemma}[theorem]{Lemma}
\newtheorem{prop}[theorem]{Proposition}
\newtheorem{corollary}[theorem]{Corollary}
\newtheorem{conj}[theorem]{Conjecture}
\theoremstyle{definition}
\newtheorem{definition}[theorem]{Definition}
\newtheorem{remark}[theorem]{Remark}
\newtheorem{example}[theorem]{Example}
\newtheorem{notation}[theorem]{Notation}
\newcommand{\cO}{{\mathcal O}}
\newcommand{\pU}{{\partial U}}
\newcommand{\bZ}{{\mathbb Z}}
\newcommand{\bC}{{\mathbb C}}
\newcommand{\bP}{{\mathbb P}}
\newcommand{\bN}{\mathbb{N}}
\newcommand{\bH}{\mathbb{H}}
\newcommand{\ra}{\rightarrow}
\newcommand{\bR}{\mathbb{R}}
\newcommand{\bM}{\mathbb{M}}
\title[Lattice cohomology and the embedded topological type of plane curve germs]{Lattice cohomology and the embedded topological type of plane curve singularities}
\author{Alexander A. Kubasch}
\address{ \scriptsize Alfr\'ed R\'enyi Institute of Math.,
Re\'altanoda utca 13-15, H-1053, Budapest, Hungary \newline
 \hspace*{3mm} ELTE - Univ. of Budapest, Dept. of Geo.,
 P\'azm\'any P\'eter s\'et\'any 1/A, 1117, Budapest, Hungary}
\email{\scriptsize kubasch.alexander@renyi.hu}
\author{Gerg\H{o} Schefler}
\address{\scriptsize Alfr\'ed R\'enyi Institute of Math.,
Re\'altanoda utca 13-15, H-1053, Budapest, Hungary \newline
 \hspace*{3mm} ELTE - Univ. of Budapest, Dept. of Geo.,
 P\'azm\'any P\'eter s\'et\'any 1/A, 1117, Budapest, Hungary}
\email{\scriptsize schefler.gergo@renyi.hu}
\subjclass[2010]{Primary. 32S05, 32S10, 32S25;
Secondary. 14Bxx, 57K18}
\thanks{The authors are partially supported by NKFIH Grant ``\'Elvonal (Frontier)'' KKP 144148. }
\begin{document}
\begin{abstract}
    Analytic lattice cohomology is a new invariant of reduced curve singularities. In the case of plane curves, it is an algebro-geometric analogue of Heegaard Floer Link homology. However, by the rigidity of the analytic structure, lattice cohomology can be naturally defined in higher codimensions as well.
    
    In this paper we show that in the case of irreducible plane curve singularities the lattice cohomology is a complete embedded topological invariant. We also compare it to the integral Seifert form in the case of multiple branches.
\end{abstract}
\maketitle

\section{Introduction}

\subsection{Context} \emph{Lattice cohomology theory} encompasses a family of closely related invariants associated to isolated singularities of complex analytic varieties. The \emph{topological lattice cohomology of normal surface singularities} was introduced in \cite{N08} in an attempt to relate their analytic invariants with the topology of their link, most prominently, its Heegaard Floer homology. Since then, building from similar ideas, many other versions have been constructed including the \emph{analytic lattice cohomology of normal surface singularities} \cite{AgNeSurf} and the \emph{analytic lattice cohomology of isolated curve singularities} \cite{AgNeCurve}. In the present paper we compare this latter invariant to the embedded topological type and the Seifert form of plane curve singularities.

The topological lattice cohomology of a normal surface singularity with $\mathbb{Q}HS^3$ link was recently proved to be isomorphic to the Heegard-Floer (HF) homology of its link \cite{Z21}. Similarly, the analytic lattice cohomology of a plane curve singularity is related to Heegard Floer Link (HFL) homology via a spectral sequence \cite{FiltLC}. The diagram below outlines some of these connections.\vspace{1mm}
\begin{center}
\small
        \begin{tikzcd}[ampersand replacement=\&, column sep = 20mm, row sep = 8mm]
            \begin{array}{c} \text{LC of curve} \\ \text{singularities} \end{array} \arrow[r, leftrightsquigarrow,"\substack{\text{work in}\\ \text{progress} \\ \quad}"] \arrow[d, Leftarrow, "\substack{\exists \ \text{a spectral} \\ \text{seq. \cite{FiltLC}}} \ \ "'] \& \begin{array}{c} \text{Topological LC} \\ \text{of NSS's} \end{array} \arrow[d, leftrightarrow, "\ \ \substack{\exists \text{ an isom. in the}\\ \mathbb{Q}HS^3 \text{case \cite{Z21}}}"] \& \begin{array}{c} \text{Analytic LC} \\ \text{of NSS's}\end{array} \arrow[l,swap, "\ \substack{\exists \ \text{a morphism} \\ \quad} \ \ "]\\
            \begin{array}{c} \text{HF Link} \\ \text{homology} \end{array} \arrow[r, swap, leftrightsquigarrow, "\substack{ \quad \\ \quad  \\ \text{different } \partial \\ \text{and gradings} \\ \quad}"'] \& \begin{array}{c} \text{HF} \\ \text{homology} \end{array} \&
        \end{tikzcd}
    \end{center}\vspace{1mm}
    Thus, one can think of lattice cohomology as an complex analytic analogue of Heegaard-Floer theory. Note however, that despite of these deep connections to low-dimensional topology, \emph{all} lattice cohomology theories have purely analytic/algebro-geometric constructions.

    \subsection{Curve singularities} The study of curve singularities, particularly in the complex plane, is a crucial area within algebraic geometry. Initially driven by algebraic and analytic tools, its understanding has been enriched by low-dimensional topology. This interdisciplinary approach has yielded a wealth of algebraic and topological invariants and deep insights. More recently invariants like the HOMFLY polynomial, HF homology and Khovanov homology have further propelled this field. This paper fits into this context by studying the lattice cohomology of curve singularities, a complex analytic analogue of the HF theory which works in higher codimensions as well.
    
    \subsection{Basic structure of lattice cohomology} The lattice cohomology module of an isolated curve singularity $(C,o)$ is a bigraded $\bZ[U]$-module denoted by $\bH^*(C,o)$. It admits a \emph{cohomological grading} $\bH^*(C,o) = \bigoplus_q \bH^q(C,o)$ and a \emph{weight grading} $\bH^q(C,o) = \bigoplus_{2n}\bH^q_{2n}(C,o)$. This latter grading is \emph{even} in order to emphasize the compatibility with the HFL theory. If $(C,o) = \bigcup_{i=1}^r(C_i,o)$ has $r$ irreducible components, then $\bH^q(C,o)=0$ for $q \geq r$. Moreover, $\bH^*_{2n}(C,o) = 0$ for $n \ll 0$ \cite{AgNeCurve} and $\bH^*_{2n}(C,o) = \bH^0_{2n}(C,o) \cong \bZ$ for $n > 0$ \cite{StrucProp}.  The Euler characteristic of $\bH^*(C,o)$ is the delta invariant $\delta(C,o)$ \cite{AgNeCurve} and $\bH^0(C,o)$ determines wether or not $(C,o)$ is Gorenstein \cite{StrucProp}.
    
    \subsection{Embedded topological type and other invariants}

    For planar germs, lattice cohomology is an embedded topological invariant and in this case $\bH^0(C,o)$ determines the multiplicity $m(C,o)$ \cite{LCandmult,StrucProp}. It is natural to ask how $\bH^*(C,o)$ is related to other topological invariants.

    \begin{theorem}\normalfont \textbf{[Theorem \ref{thm:MAIN}]} \textit{Let $(C,o)$ be an irreducible plane curve singularity. Then $\bH^*(C,o) = \bH^0(C,o)$ is a complete embedded topological invariant.}        
    \end{theorem}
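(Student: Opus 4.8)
The plan is to reduce the statement to a purely combinatorial reconstruction problem for the value semigroup of the branch and then to solve that problem. First, since $(C,o)$ is irreducible we have $r=1$, so the vanishing $\bH^q(C,o)=0$ for $q\geq r$ quoted above gives $\bH^*(C,o)=\bH^0(C,o)$ immediately. That $\bH^0$ is an embedded topological \emph{invariant} is already recorded in the excerpt (via \cite{LCandmult,StrucProp}); what remains is \emph{completeness}, i.e. that $\bH^0(C,o)$ \emph{determines} the embedded topological type. By Zariski's classical equisingularity theory for irreducible plane branches, the embedded topological type is equivalent data to the value semigroup $\Gamma=\{\,\mathrm{ord}_t g : g\in\cO_{C,o}\setminus\{0\}\,\}\subset\bN$ (equivalently, the Puiseux pairs, or the Alexander polynomial). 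Hence it suffices to prove that the bigraded $\bZ[U]$-module $\bH^0(C,o)$ determines $\Gamma$.

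Second, I would make the model of $\bH^0$ explicit. For $r=1$ the underlying lattice is $\bZ$, and (up to a fixed normalization) the weight function can be taken to be $w(\ell)=2\mathfrak h(\ell)-\ell$, where $\mathfrak h(\ell)=\dim_{\bC}\cO_{C,o}/\big(\cO_{C,o}\cap t^{\ell}\widetilde{\cO}\big)=|\Gamma\cap[0,\ell)|$. The key elementary observation is that the increments satisfy $w(\ell+1)-w(\ell)=+1$ if $\ell\in\Gamma$ and $=-1$ if $\ell\notin\Gamma$; thus $w$ is a lattice path whose up/down steps are exactly the indicator of $\Gamma$, tending to $+\infty$ at both ends. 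Moreover, since plane curve germs are Gorenstein, $\Gamma$ is symmetric ($s\in\Gamma\Leftrightarrow c-1-s\notin\Gamma$, with conductor $c=2\delta$), so $w$ is symmetric about $\ell=\delta$ and returns to its starting height at $\ell=c$. With this model, $\bH^0$ is precisely the sublevel-set persistence module of $w$: its free $\bZ[U]$-summand records the single infinite bar (born at $\min_\ell w$), while its torsion summands $\bZ[U]/U^{k}$ record the finite bars produced as the local minima merge at local maxima under the elder rule.

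Finally, the heart of the proof — and the step I expect to be the main obstacle — is to \emph{invert} this construction: to show that the isomorphism type of the persistence module (equivalently, the bar code) recovers the full step sequence of $w$, hence the indicator of $\Gamma$. The difficulty is that persistence forgets the horizontal positions of the excursions, so a priori two distinct step sequences could yield the same bar code, and one must rule this out \emph{within} the class of symmetric-semigroup paths. I would reconstruct $\Gamma$ greedily from the bottom level upward, using the Gorenstein symmetry of the path together with the additive closure of $\Gamma$ to pin down, at each height, which local extrema occur and in which order, thereby eliminating the would-be ``excursion swaps'' that a bare bar code permits. Several small cases (e.g. the $\delta=3$ branches $\langle 2,7\rangle$ and $\langle 3,4\rangle$, whose infinite bars sit at weights $0$ and $-1$ respectively) already illustrate how the weight of the free summand together with the pattern of finite bars separates the semigroups. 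Once the bar code is shown to determine $\Gamma$, Zariski's theorem closes the argument and yields that $\bH^*(C,o)=\bH^0(C,o)$ is a complete embedded topological invariant.
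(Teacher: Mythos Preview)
Your setup is correct and matches the paper: the reduction to recovering the semigroup $\Gamma$, the explicit model $w(\ell+1)-w(\ell)=\pm 1$ according as $\ell\in\Gamma$ or not, Gorenstein symmetry, and the persistence/bar-code interpretation of $\bH^0$ are all exactly what the paper uses. You also correctly name the central difficulty: the bar code forgets horizontal positions, so distinct up/down sequences can give the same module.

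The gap is in your final paragraph. ``Reconstruct $\Gamma$ greedily from the bottom level upward, using Gorenstein symmetry together with additive closure'' is a hope, not an argument, and the paper shows why it cannot be left at that: it exhibits a symmetric cofinite set $\mathcal S'\subset\bN$ (not a semigroup) with exactly the same graded root as $\langle 4,11\rangle$. So symmetry alone is not enough, and you must say \emph{precisely} how the semigroup axiom breaks the ambiguity. The paper does this in a way quite different from your bottom-up sketch. It works \emph{top-down}: it isolates the largest weight interval $\mathfrak N=[e,\infty)$ on which the module is ``simple'' (the trimmed truncated root has a single leaf), proves that for $n\in\mathfrak N$ the sublevel set $S_n$ has a rigid form forcing the small semigroup elements to sit at positions $s_n,s_n+2,\dots$, and thereby reads off an \emph{initial segment} $\mathcal E=\mathcal S_{C,o}\cap[0,d]$ of the semigroup directly from $\bH^0$. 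The decisive, plane-curve-specific input---which your proposal never invokes---is the inequality $n_i\beta_i<\beta_{i+1}$ for the minimal generators; this forbids consecutive semigroup elements below $\beta_g$ and is what drives the proof that $\beta_0,\dots,\beta_{g-1}\in\mathcal E$. The last generator $\beta_g$ is then recovered not from the bar code shape at all, but numerically from $\delta$ (which $\bH^0$ determines) via the conductor formula. Without this inequality and the top-down ``simple part'' mechanism, your greedy scheme has no way to resolve, e.g., the five indistinguishable leaves at the bottom of the $\langle 4,11\rangle$ root.
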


    \noindent Even more, in subsection \ref{subs:alg} we give an explicit algorithm for reconstructing the semigroup $\mathcal{S}_{C,o}$ from the graded $\bZ[U]$-module structure of $\bH^*(C,o)$. The proof of this statement (together with the necessary preliminaries) takes up the majority of this article. It is based on the technique of `local minima' developed in \cite{LCandmult, StrucProp}.

    We also present examples of plane curve singularities $(C,o)$ with multiple branches showing that $\bH^*(C,o)$ and the Seifert form $S(C,o)$ do not determine each other. It follows, in particular, that in this case the lattice cohomology module is not a complete embedded topological invariant. Note however that there exists an enhanced version of this theory, the \emph{filtered lattice homology of curve singularities}, which \emph{is} a complete topological invariant even in this case \cite{FiltLC}.

    The multivariate Alexander polynomial $\Delta_{C,o}(t_1,\dots,t_r)$ is a complete embedded topological invariant of plane curve singularities \cite{Y84}. In particular, it determines $\bH^*(C,o)$. In the final section of this paper we present an example that this is no longer the case for non-planar curves (where the Alexander polynomial is replaced by the multivariate Poincaré series).

\subsection{Structure of the paper.} In section 2 we give a brief overview of the analytic lattice cohomology of curve singularities. Section 3 is devoted to proving the main theorem. We introduce the required machinery and give several examples. In section 4 we compare the analytic lattice cohomology with the Seifert form of plane curve singularities. In section 5 we present its relation with the multivariable Poincaré series in the non-planar case.

\subsection{Acknowledgements.} The authors would like to express their deep gratitude to Professor András Némethi for his constant help and encouragement.

\section{Lattice cohomology of curve singularities}

\subsection{} In this section we give a \emph{very} brief description of the analytic lattice cohomology of curve singularities introduced in \cite{AgNeCurve}. For more details on the theory see \cite{AgNeCurve, FiltLC, LCandmult, StrucProp}.\vspace{3mm}

\noindent Let $(C,o)$ be a reduced complex curve singularity and $(C,o) = \bigcup_{i=1}^r(C_i,o)$ its irreducible decomposition . Denote the local algebras by $\mathcal{O}:=\mathcal{O}_{C,o}$ and $\mathcal{O}_i := \mathcal{O}_{C_i,o}$ respectively. The integral closure of $\mathcal{O}$ is $\overline{\mathcal{O}} = \bigoplus_i \overline{\mathcal{O}_i} \cong \bigoplus_i \bC\{t_i\}$ and the inclusions (i.e. the normalization maps) $n_i^* : \mathcal{O}_i \hookrightarrow \overline{\mathcal{O}_i}$ bring about the valuations
$$\mathfrak{v}_i : \mathcal{O} \to \mathcal{O}_i \to \bN \cup \{\infty \} \ \ ; \quad f \mapsto \text{ord}_{t_i}(n_i^*f).$$
They induce the $\bZ^r$-filtration $\mathcal{F}$ on $\mathcal{O}$ given by $\mathcal{F}(\ell) = \{\, f \in \mathcal{O}\ | \ \mathfrak{v}_i(f) \geq \ell_i \text{ for all }i \, \}$. Let $\mathfrak{h} : \bZ^r \to \bN$ denote the corresponding Hilbert function defined as $\mathfrak{h}(\ell) := \dim_\bC \mathcal{O} \big/ \mathcal{F}(\ell)$.

\begin{remark}\label{rem:poincaré}
    It is often convenient to encode the Hilbert function $\mathfrak{h}$ in the formal \textit{Hilbert series} $H_{C,o}(t) := \sum_{\ell \in \bZ^r}\mathfrak{h}(\ell)t^\ell$. The \textit{Poincaré series} is defined as $P_{C,o}(t):=-H(t) \cdot \prod_{i=1}^r\big(1-\frac{1}{t_i}\big)$. The former always determines the latter but the converse does not always hold, see section \ref{sec:poincaré}. 
\end{remark}

\begin{definition}
    The \textit{weight function} $w_0 : \bN^r \to \bZ$ is defined as
    $$w_0(\ell) := 2\cdot \mathfrak{h}(\ell) - |\ell |$$
    where $|\ell | = \sum_i\ell_i$. The \textit{extended weight function} $w : \bR^r_{\geq 0} \to \bZ$ is defined as
    $$w(x) = \max\big\{\, w_0(\ell) \ \big| \ \ell_i = \lfloor x_i \rfloor \text{ or } \lceil x_i \rceil \text{ for all }i \,\big\}.$$
\end{definition}
\noindent Equivalently, the lattice $\bN^r \subset \bR^r_{\geq 0}$ induces a cubical decomposition of $\bR^r_{\geq 0}$ and $w(x)$ is the maximal weight attained by the vertices of the smallest cube containing $x$.

Given an integer $n$, define the sublevel set
$S_n := \{\, x \in \bR^r_{\geq 0} \ | \ w(x) \leq n \,\}.$ It turns out that $w_0$ is bounded from below, see e.g. \cite{AgNeCurve} or Proposition \ref{rem:prelim}. Furthermore, $S_n$ is a finite cubical complex for all $n \geq \min w_0$ and empty for $n < \min w_0$.

\begin{definition}
    The lattice cohomology module of $(C,o)$ is defined as
    \begin{equation*}
        \bH^*(C,o) := \bigoplus_{q=0}^{r-1}\bigoplus_{n \in \bZ} \bH^q_{2n}(C,o), \text{ where }\bH^q_{2n}(C,o) := H^q(S_n,\bZ).
    \end{equation*}
    It admits the structure of a $\bZ[U]$-module with $U$-action $\bH^q_{2n+2}(C,o) \to \bH^q_{2n}(C,o)$ induced by the inclusions $S_n \hookrightarrow S_{n+1}$.
\end{definition}
\begin{theorem} \label{thm:prop} The following list provides a short summary of some of the more important properties of this invariant.

\begin{enumerate}
    \item The Euler characteristic of $\bH^*(C,o)$ is the delta invariant $\delta(C,o) :=\dim_\mathbb{C} \overline{\mathcal{O}}/\mathcal{O}$, see \cite[Corollary 4.2.2]{AgNeCurve}.
    \item The $\mathbb{Z}[U]$-submodule $\bH^0(C,o)$ detects whether or not $(C,o)$ is Gorenstein, see \cite[Theorem 4.2.1]{StrucProp}
    \item In the case of a plane curve singularity $\bH^*(C,o)$ determines the multiplicity $m(C,o)$, see \cite{LCandmult} for the irreducible case and \cite[Section 5]{StrucProp} for the general case.
    \item
    The space $S_n$ is contractible for $n> 0$, hence $\bH^*_{2n}(C,o) = \bH^0_{2n}(C,o) \cong \bZ$ for all $n > 0$, see \cite[Nonpositivity Theorem 6.1.2]{StrucProp}.
\end{enumerate}
\end{theorem}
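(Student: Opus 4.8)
The plan is to treat the four assertions separately, since each reflects a different combinatorial feature of the weight function $w_0$, yet all of them reduce to manipulations of the Hilbert function $\mathfrak h$ on the cubical sublevel sets $S_n$. I would prove $(4)$ first, as the tower it produces is what makes the Euler characteristic in $(1)$ well defined. Here the essential input is that $\mathfrak h(\ell)-\mathfrak h(\ell-e_i)\in\{0,1\}$ for every coordinate direction, so $w_0$ changes by exactly $\pm1$ along each lattice edge; together with $w_0(0)=0$ and the growth $w_0(\ell)=|\ell|-2\delta$ far from the origin, this coordinatewise structure should let me build, for each $n>0$, an explicit deformation retraction (or a discrete Morse matching) contracting $S_n$ to a point. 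Contractibility then yields $\bH^0_{2n}(C,o)\cong\bZ$ and $\bH^q_{2n}(C,o)=0$ for $q\ge1$ whenever $n>0$.

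With $(4)$ in place, $\bH^0_{2n}\cong\bZ$ for $n\gg0$, so $\operatorname{eu}(\bH^*)$ is defined, and for $(1)$ I would write it as the bottom-of-tower normalization plus the alternating sum $\sum_q(-1)^q\sum_n\dim_\bZ\tilde H^q(S_n)$ of reduced Betti numbers. Expanding each reduced Betti number by inclusion--exclusion over the cubes of the complex---each cube $\square$ entering $S_n$ precisely at level $w(\square)=\max_{v\in\square}w_0(v)$---and telescoping in $n$ collapses the whole expression to a single lattice sum of increments of $\mathfrak h$. The stabilization $\mathfrak h(\ell)=|\ell|-\delta$ past the conductor, which comes from $\overline{\mathcal F}(\ell)\subseteq\mathcal O$ there (so that the constant defect is exactly $\dim_\bC\overline{\mathcal O}/\mathcal O=\delta$), then forces this sum to equal $\delta$.

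The remaining two items I would read off the graded $\bZ[U]$-module $\bH^0$. For $(2)$, the Gorenstein property is equivalent to the symmetry of the value semigroup, i.e. to $\mathfrak h(\ell)-\mathfrak h(\mathbf c-\ell)=|\ell|-|\mathbf c|+\delta$ with $\mathbf c$ the conductor; a direct computation turns this into the palindromic identity $w_0(\ell)=w_0(\mathbf c-\ell)$, which manifests as a symmetry (self-duality) of the reduced module $\bH^0_{\mathrm{red}}$, so detecting Gorensteinness amounts to recognizing this duality intrinsically. For $(3)$ the multiplicity appears as the location of the first local minimum of $w_0$: in the irreducible case $1,\dots,m-1$ are forced gaps, so $w_0$ strictly descends from $w_0(e_1)=1$ to $w_0(m\,e_1)=2-m$ and then rises, pinning down $m$ as the first turn of the tower; the general planar statement follows from the multivariable `local minima' technique of \cite{LCandmult,StrucProp}, with planarity entering through the constraints it places on the admissible semigroups.

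I expect the genuine difficulties to lie in $(3)$ and in the multibranch case of $(4)$. For $(4)$, contracting $S_n$ in several variables requires ruling out spurious higher-dimensional topology created by the interaction of the branches, which is exactly where a naive inward flow can fail. For $(3)$, the global minimum of $w_0$ need not occur at $m\,e_i$---deeper minima can appear farther out, as already for the semigroup $\langle 3,7,8\rangle$---so one cannot simply read $m$ off $\min w_0$; the careful local analysis that isolates the \emph{first} critical descent, and its planar incarnation, is the main obstacle.
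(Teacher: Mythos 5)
You should first note a structural point: this paper does not prove Theorem \ref{thm:prop} at all. It is a survey statement, each item being cited to external work (\cite[Corollary 4.2.2]{AgNeCurve} for (1), \cite[Theorem 4.2.1]{StrucProp} for (2), \cite{LCandmult} and \cite[Section 5]{StrucProp} for (3), and the Nonpositivity Theorem \cite[6.1.2]{StrucProp} for (4)), so your sketch is measured against those references rather than against any argument in the present text. At the level of ingredients you point in the right directions: the edge-increment property $\mathfrak{h}(\ell)-\mathfrak{h}(\ell-e_i)\in\{0,1\}$ (hence $w_0$ changing by $\pm 1$ along lattice edges), the stabilization $\mathfrak{h}(\ell)=|\ell|-\delta$ beyond the conductor for the Euler characteristic, the symmetry $w_0(\ell)=w_0(c-\ell)$ of (\ref{eq:Gorsym}) in the Gorenstein case, and recovering $m$ from the kernel of the $U$-action exactly as in item \textit{(vii)} of Proposition \ref{rem:prelim} are all the correct tools.

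As a proof, however, the proposal has three genuine gaps. For (4), the contractibility of $S_n$ for $n>0$ \emph{is} the Nonpositivity Theorem; saying a deformation retraction or discrete Morse matching ``should'' exist is precisely the step you yourself admit fails for a naive inward flow in the multibranch case, and no replacement mechanism is given---so the item on which the rest of your plan leans (well-definedness of the Euler characteristic in (1)) is assumed rather than proved. For (2), you only argue Gorenstein $\Rightarrow$ a self-duality of $\bH^0$; the theorem asserts \emph{detection}, i.e.\ the converse as well, and that is the substantive direction: the $\bZ[U]$-module genuinely forgets information (the paper's own Example \ref{eg:rootlattice} exhibits two distinct graded roots with isomorphic modules), so ``recognizing this duality intrinsically'' requires an actual argument that no non-Gorenstein curve can produce a module with the same symmetry of ranks. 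For (3), reading $m$ off as ``the first turn of the tower'' needs the lemma that no local minimum point of $w_0$ has weight strictly between $2-m$ and $0$---equivalently that $2-m$ equals $\max\{\,n<0 \mid \ker(U:\bH^0_{2n}\to\bH^0_{2n-2})\neq 0\,\}$---which you flag but never establish; note also that your illustrative semigroup $\langle 3,7,8\rangle$ is not symmetric, hence is not the semigroup of a plane branch (harmless as an illustration of deep minima, but it cannot calibrate the planar argument), and the general multibranch planar case of (3) is deferred wholesale to the cited papers rather than sketched.
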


\begin{notation} \label{def:ZUmod}
    Given two integers $m \leq n$ with $n$ possibly being equal to $\infty$, define the graded $\bZ[U]$-module $T_{2m}^{2n}$ as follows. Consider the graded Abelian group $A=A_{2n}\oplus A_{2n-2}\oplus \dots \oplus A_{2m}$ where $A_{2j} \cong \bZ$ for all $m \leq j \leq n$. Endow it with the $\bZ[U]$-module structure given by isomorphisms $U : A_{2j} \xrightarrow{\, \cong \,} A_{2j-2}$ if $m < j \leq n$ and let $UA_{2m}=0$. In the case $m=n$, we will also use the notation $T_{2m} := T_{2m}^{2m}$
\end{notation}

\begin{prop}\cite[Remark 2.7]{LCandmult} \label{prop:directsumdecomp}
    The graded $\bZ[U]$-module $\bH^0(C,o)$ admits a direct sum decomposition
    $\bH^0(C,o) \cong T_{2\min w_0}^\infty\oplus \bigoplus_k T_{2m_k}^{2n_k}$
    which is unique (but not canonical) up to graded $\bZ[U]$-module isomorphism.
\end{prop}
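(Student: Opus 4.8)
The plan is to regard $\bH^0(C,o)$, together with the maps $U$, as an integral persistence module and to produce its interval (tower) decomposition. First I would record the three structural facts that are already available. Each graded piece $\bH^0_{2n}(C,o) = H^0(S_n,\bZ)$ is a finitely generated free abelian group, of rank equal to the number of connected components of $S_n$; it vanishes for $n < \min w_0$ since there $S_n = \emptyset$; and by part (4) of Theorem \ref{thm:prop} one has $\bH^0_{2n}(C,o) \cong \bZ$ with $U$ an isomorphism for all $n > 0$. Thus the doubly infinite sequence $\cdots \xrightarrow{\,U\,} \bH^0_{2n+2}(C,o) \xrightarrow{\,U\,} \bH^0_{2n}(C,o) \xrightarrow{\,U\,} \cdots$ consists of free modules, is zero far below and stabilizes to $\bZ$ (with $U$ an isomorphism) far above, so only finitely many transitions are nontrivial.

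The key observation is that each map $U : H^0(S_{n+1},\bZ) \to H^0(S_n,\bZ)$ is of a very rigid combinatorial type. Writing $\phi : \pi_0(S_n) \to \pi_0(S_{n+1})$ for the map on components induced by the inclusion $S_n \hookrightarrow S_{n+1}$, the map $U$ is the $\bZ$-linear dual of the ``merge'' map $\phi_* : \bZ^{\pi_0(S_n)} \to \bZ^{\pi_0(S_{n+1})}$, $e_p \mapsto e_{\phi(p)}$. In the component bases $\phi_*$ is a $0$-$1$ matrix with exactly one nonzero entry per row; consequently its image $\bZ^{\phi(\pi_0(S_n))}$ is a direct summand with free complement, and dually $U$ has free kernel $\bZ^{\pi_0(S_{n+1}) \setminus \phi(\pi_0(S_n))}$ and image a direct summand, with $U$ restricting to an isomorphism between complementary summands. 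Since a composite of such component maps is again induced by a map of finite sets (the composite on $\pi_0$), the same holds for every power $U^k$: all the submodules $\operatorname{im}(U^k)$ and $\ker(U^k)$ of each graded piece are direct summands with free complements.

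Granting this, I would construct the decomposition by the integral analogue of the interval decomposition for persistence modules, working downward from the stable range. The all-ones class is preserved by $U$ (restriction sends the unit of $H^0(S_{n+1},\bZ)$ to the unit of $H^0(S_n,\bZ)$) and survives for every $n \ge \min w_0$, dying only in passing to $S_{\min w_0 - 1} = \emptyset$; it generates the summand $T^\infty_{2\min w_0}$. Each remaining tower $T_{2m_k}^{2n_k}$ records a class that is new at level $n_k$ (a generator of $\operatorname{coker} U$ there) and is annihilated by the first $U$ below level $m_k$. Because at each level the relevant images and kernels are direct summands with free complements, the short exact sequences splitting off these towers split over $\bZ$, and compatible bases at consecutive levels can be chosen so that $U$ sends each basis vector to a basis vector or to $0$; assembling these yields the asserted direct sum decomposition as graded $\bZ[U]$-modules.

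Finally, uniqueness is the easy part: the number of summands $T_{2m}^{2n}$ (and the presence of the single infinite tower) is determined by the ranks of the composed maps $U^k$ between graded pieces through the usual inclusion–exclusion formula, and these ranks are isomorphism invariants of the module. I expect the main obstacle to be the third step --- not the existence of splittings at a single level, which is immediate from the $0$-$1$ structure, but the bookkeeping required to choose them coherently across all levels so that the global $\bZ[U]$-module splits; the fact that every $\operatorname{im}(U^k)$ and $\ker(U^k)$ is a free direct summand is exactly what prevents any integral extension problem from obstructing this.
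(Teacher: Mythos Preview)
The paper does not give its own proof of this proposition; it is stated with a citation to \cite[Remark 2.7]{LCandmult}. The only relevant material in the present paper is the graded root construction in \S3.3: once one knows $\bH^0(C,o)\cong\bH(R(C,o),\chi)$ for the tree $R(C,o)$, the tower decomposition amounts to decomposing a rooted tree into one ascending path per leaf, which is elementary.

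Your argument is correct and is essentially the same idea phrased in the language of persistence modules rather than graded roots. Your key observation---that each $U$ is the pullback along a map of finite sets $\phi:\pi_0(S_n)\to\pi_0(S_{n+1})$, hence a $0$--$1$ matrix whose images and kernels (and those of all iterates) are free direct summands---is exactly the content of the graded root: the tree $R(C,o)$ has vertex set $\bigsqcup_n\pi_0(S_n)$ with edges recording $\phi$. What the graded root viewpoint buys is that the ``bookkeeping'' you flag as the main obstacle becomes transparent: choosing, for every non-leaf vertex, a distinguished child decomposes the tree into paths, and a triangular change of basis at each level (replacing $e_q$ by a suitable combination supported on descendants of $q$) turns $U$ into a map sending each new basis vector either to a basis vector or to $0$. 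Your persistence-module formulation is more portable (it would apply to any situation where the structure maps are induced by maps of finite sets), while the tree picture makes the existence of globally compatible splittings visually obvious. Both routes yield the same uniqueness argument you give, via ranks of the iterates $U^k$.
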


\noindent Note also the following corollary of the Nonpositivity Theorem (item \textit{(4)} of Theorem \ref{thm:prop}).

\begin{corollary}\label{cor:canemb}
    In fact, the graded $\bZ[U]$-submodule $T^\infty_{2 \min w_0}$ mentioned in Proposition \ref{prop:directsumdecomp} is canonically embedded into $\bH^0(C,o)$. Indeed, $\bH_{2n}^0\cong \bZ$ and $U : \bH^0_{2n+2} \xrightarrow{\, \cong \, } \bH^0_{2n}$ for any $n>0$, and  $T^\infty_{2 \min w_0} \subseteq \bH^0$ is the $\bZ[U]$-submodule generated by $\bH^0_{> 0} := \bigoplus_{n>0}\bH^0_{2n}$.
\end{corollary}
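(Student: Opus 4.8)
The plan is to deduce everything from the Nonpositivity Theorem (item (4) of Theorem \ref{thm:prop}) together with the decomposition of Proposition \ref{prop:directsumdecomp}. First I would record the two displayed assertions directly. For $n>0$ the set $S_n$ is contractible, so $\bH^0_{2n}=H^0(S_n;\bZ)\cong\bZ$; this is already part of item (4). For the $U$-action, note that for $n>0$ both $S_n$ and $S_{n+1}$ are contractible, hence connected, so the inclusion $S_n\hookrightarrow S_{n+1}$ induces on $H^0$ the restriction map sending the class of the constant function $1$ to the class of the constant function $1$. Thus $U:\bH^0_{2n+2}\to\bH^0_{2n}$ is an isomorphism between copies of $\bZ$ for every $n>0$.

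Next I would exploit that $\min w_0\le w_0(0)=0$ (indeed $\mathcal{F}(0)=\mathcal{O}$, so $\mathfrak{h}(0)=0$ and $w_0(0)=0$). Fix a decomposition $\bH^0(C,o)\cong T^\infty_{2\min w_0}\oplus\bigoplus_k T^{2n_k}_{2m_k}$ as in Proposition \ref{prop:directsumdecomp}. For every $n>0$ we have $n>\min w_0$, so the summand $T^\infty_{2\min w_0}$ already contributes a full copy of $\bZ$ in weight $2n$; since $\bH^0_{2n}\cong\bZ$, each finite tower $T^{2n_k}_{2m_k}$ must contribute $0$ in weight $2n$. As such a tower is supported exactly in weights $2m_k,\dots,2n_k$, this forces $n_k\le 0$ for all $k$, i.e. every finite tower lives in nonpositive weights. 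The upshot is that $\bH^0_{2n}=(T^\infty_{2\min w_0})_{2n}$ for all $n>0$, so the intrinsically defined subgroup $\bH^0_{>0}=\bigoplus_{n>0}\bH^0_{2n}$ is contained in the summand $T^\infty_{2\min w_0}$ and is precisely its positive-weight part.

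Finally I would check that the $\bZ[U]$-submodule generated by $\bH^0_{>0}$ fills out the entire summand. Within $T^\infty_{2\min w_0}$ the operator $U$ restricts to isomorphisms between consecutive weights all the way down to the bottom weight $2\min w_0$ (this is the definition of $T^\infty_{2\min w_0}$ in Notation \ref{def:ZUmod}); since $\min w_0\le 0$, applying $U$ repeatedly to a generator of $\bH^0_2$ produces generators in every weight $2j$ with $\min w_0\le j\le 1$, while the generators of $\bH^0_{2n}$ for $n\ge 1$ account for the weights above. Hence $\bZ[U]\cdot\bH^0_{>0}=T^\infty_{2\min w_0}$. Because the left-hand side does not depend on the chosen decomposition, the $T^\infty$ summand of every decomposition must coincide with this canonical submodule, which is the content of the corollary. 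The main point to get right — really the only subtlety — is the confinement of all finite towers to nonpositive weights; once that is secured via contractibility for $n>0$, canonicity is automatic, since $\bH^0_{>0}$ is defined without reference to any decomposition.
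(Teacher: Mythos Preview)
Your argument is correct and follows exactly the route the paper intends: the corollary is stated as an immediate consequence of the Nonpositivity Theorem (item (4) of Theorem \ref{thm:prop}), and you have simply spelled out the details that the paper leaves implicit in its ``Indeed, \dots'' clause. In particular, your observation that the finite towers $T^{2n_k}_{2m_k}$ must satisfy $n_k\le 0$ (because $T^\infty_{2\min w_0}$ already saturates the rank in each positive weight) is precisely the mechanism behind the paper's assertion, and your verification that $\bZ[U]\cdot\bH^0_{>0}=T^\infty_{2\min w_0}$ is the intended justification for canonicity.
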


\section{The lattice cohomology of plane branches}
\subsection{} The main result of this section is the following

\begin{theorem}\label{thm:MAIN}
    Let $(C,o) \subset (\bC^2,o)$ be an irreducible plane curve singularity. Then the graded $\bZ[U]$-module $\bH^0(C,o)$ is a complete embedded topological invariant.
\end{theorem}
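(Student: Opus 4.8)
The plan is to reduce the statement to a purely combinatorial reconstruction problem and then solve it. By a classical theorem (Zariski), the embedded topological type of an irreducible plane branch is equivalent to its semigroup $\mathcal{S}_{C,o} = \{\mathfrak{v}(f) : f \in \cO \setminus \{0\}\} \subseteq \bN$, where $\mathfrak{v} = \mathfrak{v}_1$ is the single valuation. Since $r=1$ forces $\bH^q(C,o)=0$ for $q \geq 1$, we have $\bH^*(C,o) = \bH^0(C,o)$, and this module is already known to be an embedded topological invariant; hence it suffices to prove \emph{completeness}, i.e.\ that the graded $\bZ[U]$-module $\bH^0(C,o)$ determines $\mathcal{S}_{C,o}$. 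Because the valuation is recorded by the Hilbert function via $\mathfrak{h}(\ell+1)-\mathfrak{h}(\ell) = 1$ exactly when $\ell \in \mathcal{S}_{C,o}$ (and $0$ otherwise), and since $w_0(\ell) = 2\mathfrak{h}(\ell)-\ell$, this is equivalent to reconstructing the weight function $w_0 \colon \bN \to \bZ$ from $\bH^0(C,o)$.

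Next I would translate the module into the combinatorics of $w_0$. For $r=1$ the sublevel set $S_n \subseteq \bR_{\geq 0}$ is a finite union of intervals, so $\bH^0_{2n}(C,o) = H^0(S_n,\bZ)$ is free of rank equal to the number of connected components of $S_n$. Under the decomposition of Proposition \ref{prop:directsumdecomp}, the graded $\bZ[U]$-module $\bH^0(C,o)$ is then equivalent to the data of $\min w_0$ together with the multiset of pairs $\{(m_k,n_k)\}_k$, which is precisely the $0$-dimensional sublevel-set persistence barcode of $w_0$: its births $\{m_k\}_k \cup \{\min w_0\}$ are the values of the local minima of $w_0$, and its deaths $\{n_k+1\}_k$ are the values of the local maxima at which the corresponding valleys merge (the infinite tower $T^\infty_{2\min w_0}$ being the unbounded component by Corollary \ref{cor:canemb}). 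Thus $\bH^0(C,o)$ determines the multiset of local minimum values and the multiset of local maximum values of $w_0$, together with their elder-rule pairing, and in particular it determines $\delta(C,o)$ as the (normalised) Euler characteristic by Theorem \ref{thm:prop}(1).

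The heart of the argument is to recover the actual function $w_0$ from this barcode. The decisive simplification is that all increments of $w_0$ equal $\pm 1$, so the horizontal distance between two consecutive critical points equals the difference of their values; consequently it suffices to recover the left-to-right ordered sequence of critical values $m_1, X_1, m_2, X_2, \dots, X_s, m_{s+1}$ alternating minima and maxima. I would pin this ordering down using the rigidity of plane-branch weight functions: the boundary value $w_0(0)=0$ gives $m_1 = 0$ and, since $(C,o)$ is singular, $X_1 = w_0(1) = 1$; the Gorenstein symmetry $c = 2\delta$, $w_0(\ell) = w_0(c-\ell)$ on $[0,c]$ with $w_0$ strictly increasing for $\ell \geq c$, gives $m_{s+1} = w_0(c)=0$; and the additive closure $\mathcal{S}_{C,o}+\mathcal{S}_{C,o}\subseteq \mathcal{S}_{C,o}$ together with the constraints $X_j > m_j, m_{j+1}$ restricts the admissible interleavings. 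An inductive ``local minima'' peeling procedure in the spirit of \cite{LCandmult,StrucProp} then reads the critical values off in order, recovering first the multiplicity $m(C,o)$ and successively the remaining data; this is exactly the explicit reconstruction algorithm promised in subsection \ref{subs:alg}. Once $w_0$ (equivalently $\mathfrak{h}$) is known, $\mathcal{S}_{C,o}$ and hence the embedded topological type follow.

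The main obstacle I anticipate is precisely this last reconstruction. A priori the barcode forgets the horizontal positions of the critical points and the left-to-right order of the branches of the merge tree, so several distinct $\pm 1$ paths can share the same barcode. The crux is therefore to prove that the additional arithmetic rigidity special to plane branches --- the additive closure of the semigroup and the Gorenstein symmetry, exploited through the local-minima technique --- removes this ambiguity and singles out a \emph{unique} admissible weight function, thereby showing that non-isomorphic germs cannot yield isomorphic $\bH^0(C,o)$.
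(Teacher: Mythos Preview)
Your reduction is correct and matches the paper's: the embedded topological type is the semigroup $\mathcal{S}_{C,o}$, and the task is to reconstruct $\mathcal{S}_{C,o}$ from the graded $\bZ[U]$-module $\bH^0(C,o)$, equivalently from its direct-sum decomposition (barcode). You also correctly identify the obstacle: distinct $\pm 1$ paths can share the same barcode, so one needs extra rigidity specific to plane branches.

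However, your resolution of that obstacle is not a proof but an assertion. You write that ``an inductive `local minima' peeling procedure \dots\ then reads the critical values off in order'' and that ``the additional arithmetic rigidity \dots\ singles out a \emph{unique} admissible weight function''. This is precisely the content of the theorem, and you have not supplied it. In fact your stated target---recovering the entire left-to-right sequence of critical values of $w_0$---is stronger than what the paper achieves, and the paper's own example $\mathcal{S}=\langle 11,14\rangle$ shows that the ordering $s_1<s_2 \Rightarrow w_0(s_1)\geq w_0(s_2)$ among semigroup elements below $\delta$ can fail, so the na\"ive ``peel from the outside in'' reading of the barcode is not globally valid. The generic rigidity you invoke (additive closure, Gorenstein symmetry) holds for \emph{any} symmetric numerical semigroup and is not enough by itself; the paper needs the plane-branch-specific inequality $n_i\beta_i<\beta_{i+1}$ in an essential way.

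The paper's actual argument is different in strategy. It does \emph{not} attempt to recover all of $w_0$. Instead it isolates a threshold $e=\min\mathfrak{N}$, defined purely from the $\bZ[U]$-module as the least level at which the $U$-action on $\bM_{\geq 2n}/T^\infty_{2\min w_0}$ becomes trivial, and proves that for $n\in\mathfrak{N}$ the sublevel sets $S_n$ have a rigid shape (isolated points flanking at most one interval). This yields an explicitly computable \emph{initial part} $\mathcal{E}=\mathcal{S}_{C,o}\cap[0,d]$. The key arithmetic step, which your sketch lacks entirely, is Proposition~\ref{prop:genericcase}: using $n_i\beta_i<\beta_{i+1}$ and a windowing lemma (Lemma~\ref{lem:ablakos}) one shows $\beta_{g-1}\in\mathcal{E}$, hence $\beta_0,\dots,\beta_{g-1}\in\mathcal{E}$. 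The last generator $\beta_g$ is then not read off from the barcode at all but computed from $\delta$ via the partial-conductor formula~(\ref{eq:ci}). To turn your outline into a proof you would need to supply exactly these ingredients.
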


\begin{remark}
    Note that for irreducible curve singularities $\bH^{q}(C,o) = 0$ for all $q \geq 1$ by construction. (This is not true in general: see subsection \ref{subs:seifert} for an example with $\bH^1 \neq 0.)$
\end{remark}

\noindent Before proving Theorem \ref{thm:MAIN}, we will need some preliminaries.

\subsection{Preliminaries on irreducible plane curve singularities}

\begin{prop} \label{rem:prelim}
 Let $(C,o) \subset (\bC^2,o)$ be an irreducible plane curve singularity. The semigroup of values of $(C,o)$ is defined as
    $$\mathcal{S}_{C,o}:= \{\, \text{\normalfont ord}(n^*f) \ | \ f \in \mathcal{O}_{\bC^2,o} \, \} \cap \mathbb{N},$$
    where $n : (\bC,0) \to (C,o)$ denotes the normalization / Puiseux parametrization (see e.g. \cite{Garcia, delaMata87} for more details). We now list some key properties of $\mathcal{S}_{C,o}$.

\begin{enumerate}[label=(\roman*)]

    \item The semigroup $\mathcal{S}_{C,o}$ is a complete embedded topological invariant of the curve singularity $(C,o)$, see e.g. \cite{BK86}. 

    \item The semigroup $\mathcal{S}_{C,o}$ is a \emph{numerical semigroup}, meaning that $\#(\bN \setminus \mathcal{S}_{C,o}) < \infty$.
    
    \item The multiplicity of the germ $(C,o)$ is defined as $m(C,o):= \min(\mathcal{S}_{C,o}\setminus \{0\})$ and its delta invariant as $\delta(C,o) := \#(\bN\setminus\mathcal{S}_{C,o})$.
    
    \item The semigroup $\mathcal{S}_{C,o}$ is symmetric in the sense that $\ell \in \mathcal{S}$ if and only if $c-1-\ell \not\in\mathcal{S}$ (see \cite{K70}) where $c:= \min\{ \,\ell \in \bN \ | \ \ell+\bN \subseteq \mathcal{S} \,\}$ is the \textit{conductor}. (Note that $c=2\delta$.)
    
    \item Let $\beta_0,\dots,\beta_g$ be the (unique) minimal generating set of $\mathcal{S}_{C,o}$. (Note that $\beta_0 = m(C,o)$.) Define $l_i := \gcd(\beta_0, \dots, \beta_i)$, and $n_i := l_{i-1}/l_i$. Then
    \begin{equation}\label{eq:nibetai<}
        n_i\beta_i < \beta_{i+1} \text{ for all } 0 \leq i \leq g-1,
    \end{equation}
    see e.g. \cite[Theorem 2]{B72}.
    
\end{enumerate}

    \noindent The next three properties describe the relationship of the semigroup of values $\mathcal{S}_{C,o}$ to the lattice cohomology module $\bH^*(C,o)$.

\begin{enumerate}[resume, label=(\roman*)]
    \item The weight function of $(C,o)$ can be given by the formula 
    \begin{equation} \label{eq:wfromS}
        w_0(\ell) = \#\big([0,\ell)\cap \mathcal{S}_{C,o}\big) - \#\big([0,\ell)\setminus \mathcal{S}_{C,o}\big),
    \end{equation}
    see {\normalfont(\cite{AgNeCurve}, Example 4.7.3 or \cite{LCandmult}, subsection 5.2)}. It immediately follows from item \textit{(iv)} that $w_0$ is symmetric in the sense that 
    \begin{equation} \label{eq:Gorsym}
        w_0(\ell) = w_0(c-\ell) \text{ for all } \ell \in \bN.
    \end{equation} Also, for any $\ell \in \mathbb{N}$ we have 
    \begin{equation}\label{eq:w_0valtozasa}
       \vert w_0(\ell+1)-w_0(\ell)\vert\equiv1 \text{ as } w_0(\ell+1) = \begin{cases}
            w_0(\ell)+1, \hspace{5mm} \text{if }\ell \in \mathcal{S}_{C,o};
            \\
            w_0(\ell)-1, \hspace{5mm} \text{if }\ell \notin \mathcal{S}_{C,o}.
        \end{cases}
    \end{equation}
    \item The multiplicity $m(C,o)$ can be determined from $\bH^0(C,o)$ as follows (see \cite{LCandmult}). First, $m(C,o) \leq 2$ if and only if $\min\{\,n \in \bZ \ | \ \bH^0_{2n}(C,o) \not= 0 \,\}=0$. In this case the germ $(C,o)$ is smooth if and only if $ {\rm rank}_{\bZ}\bH^0_0(C,o) = 1$ and $m(C,o)=2 $ otherwise. On the other hand $m(C,o)>2$ if and only if $\min\{\,n \in \bZ \ | \ \bH^0_{2n}(C,o) \not= 0 \,\}<0$ and in this case
    $$m(C,o) = 2-\max\big\{\, n < 0 \ | \ \ker\big(U:\bH^0_{2n}(C,o)\to\bH^0_{2n-2}(C,o)\big) \neq 0 \, \big\}.$$
    \item The delta invariant can be determined from $\bH^0(C,o)$ via the formula
    $$\delta(C,o) = \text{\normalfont rank}_{\bZ}\bH^0_{\leq 0}(C,o)-1,$$ see \cite[Remark 2.3.3 (c)]{StrucProp}.
\end{enumerate}
\end{prop}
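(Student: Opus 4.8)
The plan is to prove Proposition \ref{rem:prelim} item by item, since each of the eight properties is either a standard fact about the semigroup of an irreducible plane curve germ (items (i)--(v)) or a consequence of the formula \eqref{eq:wfromS} connecting $w_0$ to $\mathcal{S}_{C,o}$ (items (vi)--(viii)). For the purely semigroup-theoretic statements I would primarily cite the classical references already indicated in the excerpt and sketch the arguments only where a short verification clarifies the logic. Concretely, for (i) I would invoke the Brieskorn--Kn\"orrer account \cite{BK86} of the equivalence between the embedded topological type, the Puiseux characteristic exponents, and the semigroup; for (ii) and (iii) the finiteness of $\bN \setminus \mathcal{S}_{C,o}$ and the identifications $m(C,o) = \min(\mathcal{S}_{C,o}\setminus\{0\})$ and $\delta(C,o) = \#(\bN\setminus\mathcal{S}_{C,o})$ are immediate from the definition of the semigroup together with the fact that $\mathcal{O}_{C,o}$ and its integral closure $\bC\{t\}$ agree in all sufficiently high orders.

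Next I would handle the structural semigroup facts (iv) and (v). For the symmetry (iv) I would appeal to Kunz's theorem \cite{K70}, which characterizes Gorenstein one-dimensional local rings by the symmetry $\ell \in \mathcal{S} \iff c - 1 - \ell \notin \mathcal{S}$, and recall that complete intersections — in particular plane curve germs — are Gorenstein, so the symmetry holds; the relation $c = 2\delta$ then follows by pairing each of the $\delta$ gaps with an element of $\mathcal{S}_{C,o}$ below the conductor. For (v) I would reproduce the inequality $n_i\beta_i < \beta_{i+1}$ as a direct consequence of the semigroup's generation by the characteristic (Bertin--Carbonne / Brieskorn) data, citing \cite{B72}: the point is that $n_i\beta_i = (l_{i-1}/l_i)\beta_i$ already lies in the subgroup generated by $\beta_0,\dots,\beta_i$, so the genuinely new generator $\beta_{i+1}$ must strictly exceed it.

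For the three properties relating $\mathcal{S}_{C,o}$ to $\bH^*(C,o)$, the central tool is the formula \eqref{eq:wfromS}, which I would quote from \cite{AgNeCurve, LCandmult}. Given this formula, everything in (vi) is a short computation: the incremental relation \eqref{eq:w_0valtozasa} follows by comparing the defining counts for $w_0(\ell+1)$ and $w_0(\ell)$, noting that passing from $[0,\ell)$ to $[0,\ell+1)$ adds exactly the single point $\ell$, which contributes $+1$ if $\ell \in \mathcal{S}_{C,o}$ and $-1$ otherwise; the symmetry \eqref{eq:Gorsym} then follows from (iv) by a telescoping argument on \eqref{eq:w_0valtozasa}, or equivalently by pairing the semigroup symmetry with the count in \eqref{eq:wfromS}. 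Items (vii) and (viii) I would cite directly from \cite{LCandmult} and \cite[Remark 2.3.3(c)]{StrucProp} respectively, indicating only the mechanism: (vii) reads off $m(C,o)$ from the lowest weight level and the first nontrivial $U$-kernel, reflecting the fact that the first gap of $\mathcal{S}_{C,o}$ occurs at $m(C,o)-1$ and hence produces a local minimum of $w_0$ detectable in the module structure, while (viii) follows since $\mathrm{rank}_{\bZ}\bH^0_{\leq 0}(C,o)$ counts the lattice points contributing to the nonpositive sublevel sets, which by \eqref{eq:wfromS} and symmetry equals $\delta(C,o) + 1$.

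Since this proposition is essentially a curated dictionary of known results assembled for later use, the main task is organizational rather than a single hard deduction: I expect the one place requiring genuine care to be the derivation of the $w_0$-symmetry \eqref{eq:Gorsym} and the multiplicity/delta formulas (vii)--(viii) from \eqref{eq:wfromS}, where one must correctly match the boundary conventions (half-open intervals $[0,\ell)$, the conductor $c = 2\delta$, and the even weight grading convention $\bH^q_{2n}$) so that the index bookkeeping in the $\bZ[U]$-module picture agrees precisely with the semigroup counts; I would verify these on a small explicit example such as $\mathcal{S} = \langle 2,3\rangle$ before presenting the general argument.
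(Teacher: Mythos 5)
Your proposal is correct and takes essentially the same route as the paper: Proposition \ref{rem:prelim} is given there without proof, as a curated dictionary of cited results (items (i)--(v) from \cite{BK86}, \cite{K70}, \cite{B72}, items (vi)--(viii) from \cite{AgNeCurve}, \cite{LCandmult}, \cite{StrucProp}), and your short verifications --- the one-point comparison giving \eqref{eq:w_0valtozasa}, the telescoping/pairing derivation of \eqref{eq:Gorsym} from (iv), and the gap-counting behind $c=2\delta$ --- are exactly the intended elementary deductions. One caution: your heuristic for (v), namely that $n_i\beta_i$ already lies in the group generated by $\beta_0,\dots,\beta_i$ so the new generator must exceed it, is not a valid argument --- minimality only yields $\beta_{i+1}\notin\langle\beta_0,\dots,\beta_i\rangle$, not the inequality, and \eqref{eq:nibetai<} fails for general numerical semigroups (e.g.\ $\langle 4,6,7\rangle$ has $n_1\beta_1=12>7=\beta_2$), so it is a genuinely planar fact about characteristic sequences and the citation of \cite{B72} must carry the full weight there.
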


 \noindent We remark that statements \textit{(i)-(viii)} of Proposition \ref{rem:prelim} all have analogues in the case of plane curves with multiple branches.

 \begin{corollary} \label{cor:w(s)leq0}
    For every $s \in \mathcal{S}_{C,o} \cap [0, c]$, we have $w(s) \leq 0$.
\end{corollary}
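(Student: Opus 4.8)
The plan is to reduce the assertion about the extended weight function $w$ to the combinatorial walk $w_0$, reformulate it as a counting inequality, and then extract the inequality from the additive structure of $\mathcal{S}_{C,o}$ by a single reflection argument. First I would observe that $s$ is an integer, so the smallest cube containing the point $s\in\bR_{\geq 0}$ is the degenerate cube $\{s\}$; hence $w(s)=w_0(s)$ and it suffices to prove $w_0(s)\leq 0$. By the formula \eqref{eq:wfromS} this is exactly the statement that
$$\#\big(\mathcal{S}_{C,o}\cap[0,s)\big)\leq \#\big([0,s)\setminus\mathcal{S}_{C,o}\big),$$
i.e. that there are at least as many gaps as semigroup elements strictly below $s$.

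The crux of the proof is the following observation, which uses \emph{only} that $\mathcal{S}_{C,o}$ is closed under addition. For any gap $g\in\bN\setminus\mathcal{S}_{C,o}$, the reflection $x\mapsto g-x$ injects $\mathcal{S}_{C,o}\cap[0,g]$ into $[0,g]\setminus\mathcal{S}_{C,o}$: if both $x$ and $g-x$ belonged to $\mathcal{S}_{C,o}$, then their sum $g$ would too, contradicting $g\notin\mathcal{S}_{C,o}$. Counting, and using $\#(\mathcal{S}_{C,o}\cap[0,g])+\#([0,g]\setminus\mathcal{S}_{C,o})=g+1$, this injection forces $2\,\#(\mathcal{S}_{C,o}\cap[0,g])\leq g+1$, which by \eqref{eq:wfromS} is precisely $w_0(g+1)\leq 0$. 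Thus I would establish, as a lemma, that $w_0(g+1)\leq 0$ for every gap $g$.

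To finish I would transfer this from gaps to semigroup elements using the symmetry of $w_0$. For $s\in\mathcal{S}_{C,o}$ with $1\leq s\leq c-1$, property \textit{(iv)} of Proposition \ref{rem:prelim} shows that $g:=c-1-s$ is a gap, and since $g+1=c-s$, the symmetry relation \eqref{eq:Gorsym} gives $w_0(s)=w_0(c-s)=w_0(g+1)\leq 0$ by the lemma. The two boundary cases are immediate: $w_0(0)=0$, and $w_0(c)=w_0(0)=0$ again by \eqref{eq:Gorsym}.

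The step I expect to be the real obstacle is the treatment of \emph{runs of consecutive semigroup elements} near the conductor, where \eqref{eq:w_0valtozasa} makes $w_0$ increase by $1$ at each step. A naive reflection of $[0,s)$ about its midpoint $(s-1)/2$ would prove the counting inequality directly, but it breaks down exactly when $s-1\in\mathcal{S}_{C,o}$, i.e. inside such a run; routing through the gap-lemma and symmetry is what sidesteps this. As an alternative closing that avoids symmetry altogether, the same lemma shows that any last maximizer of $w_0$ on $[0,c]$ is a gap (its successor is smaller), whence $\max_{[0,c]}w_0\leq 1$; since every $s\in\mathcal{S}_{C,o}\cap[0,c)$ is an ascent, $w_0(s)=w_0(s+1)-1\leq 0$.
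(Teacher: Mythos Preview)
Your proof is correct and takes a genuinely different route from the paper's. The paper argues indirectly via topology: assuming $w_0(s)\geq 1$ for some $s\in\mathcal{S}_{C,o}\cap[1,c-1]$, equation \eqref{eq:w_0valtozasa} forces $w_0(s+1)\geq 2$, so the point $s+1$ separates $0$ from $c$ in $S_1$, contradicting the contractibility of $S_1$ (item \textit{(4)} of Theorem \ref{thm:prop}, the Nonpositivity Theorem). Your argument is purely combinatorial: the reflection $x\mapsto g-x$ about a gap $g$ yields the elementary inequality $w_0(g+1)\leq 0$, and then Gorenstein symmetry converts this into the desired bound at semigroup elements. Your approach is more self-contained, since it avoids importing the Nonpositivity Theorem from \cite{StrucProp}; the paper's proof is shorter on the page but leans on that external result. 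Your alternative closing (bounding $\max_{[0,c]}w_0\leq 1$ via the last maximizer) is also valid and has the pleasant feature of avoiding symmetry entirely, so it would apply to any numerical semigroup, not just symmetric ones.
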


\begin{proof}
    Let us suppose indirectly, that $w_0(s)\geq1$. From Gorenstein symmetry (\ref{eq:Gorsym}) we get $w_0(0)=w_0(c)=0$, hence $s \in \mathcal{S}_{C, o} \cap [1, c-1]$. Even more, from equation (\ref{eq:w_0valtozasa}) we see that $w_0(s+1)=w_0(s)+1 >1$, with $s+1 \in [2, c-2]$. But this implies that $0$ and $c$ are contained in different connected components of $S_1$, which contradicts its  contractibility.
\end{proof}

\subsection{Graded roots}

\begin{definition}\cite[3.2 (a)]{NOSz}\label{def:gradedroot}
    Let $R$ be an infinite tree with vertex set $V$ and edge set $E$ equipped with a grading $\chi : V \to \bZ$. We say that the pair $(R,\chi)$ is a \emph{graded root} if
    \begin{itemize}
        \item[(a)] $\chi(u)-\chi(v) = \pm 1$ for eny edge $\{u,v\} \in E$
        \item[(b)] $\chi(u) > \min\big\{\chi(v),\chi(w)\big\}$ whenever $\{u,v\},\{u,w\} \in E$
        \item[(c)] $\chi$ is bounded from below, $\big|\,\chi^{-1}(k)\,\big|$ is finite for all $k$ and equal to $1$ for all $k \gg 0$.
    \end{itemize}
\end{definition}

\begin{definition}
    Let $(R,\chi)$ be a graded root. We associate to it a graded $\bZ[U]$-module $\bH(R,\chi)$. Its underlying Abelian group is the free Abelian group $\bZ \langle V \rangle$ generated by the vertex set $V$ of $R$. The grading of the $\bZ$-summand $v\cdot \bZ \leq \bZ\langle V\rangle$ generated by $v \in V$ is set to be $\chi(v)$. Given any $v \in V$, the $U$-action is defined as
    $$U\cdot v := \sum_{\substack{ w \in V \\ \{v,w\} \in E \\ \chi(w) = \chi(v)-1 }}  w.$$
\end{definition}

\begin{example}\label{eg:rootlattice} Consider the two graded roots depicted in the diagram below.
    \begin{center}
        
\tiny

\tikzset{every picture/.style={line width=0.75pt}} 

\begin{tikzpicture}[x=0.75pt,y=0.75pt,yscale=-.6,xscale=.6]

\draw [color={rgb, 255:red, 225; green, 225; blue, 225 }  ,draw opacity=1 ]   (70,60) -- (510,60) ;
\draw [color={rgb, 255:red, 225; green, 225; blue, 225 }  ,draw opacity=1 ]   (70,90) -- (510,90) ;
\draw [color={rgb, 255:red, 225; green, 225; blue, 225 }  ,draw opacity=1 ]   (70,120) -- (510,120) ;
\draw [color={rgb, 255:red, 225; green, 225; blue, 225 }  ,draw opacity=1 ]   (70,150) -- (510,150) ;
\draw [color={rgb, 255:red, 225; green, 225; blue, 225 }  ,draw opacity=1 ]   (70,180) -- (510,180) ;
\draw    (140,150) -- (110,180) ;
\draw [shift={(110,180)}, rotate = 135] [color={rgb, 255:red, 0; green, 0; blue, 0 }  ][fill={rgb, 255:red, 0; green, 0; blue, 0 }  ][line width=0.75]      (0, 0) circle [x radius= 3.35, y radius= 3.35]   ;
\draw    (140,150) -- (170,180) ;
\draw [shift={(170,180)}, rotate = 45] [color={rgb, 255:red, 0; green, 0; blue, 0 }  ][fill={rgb, 255:red, 0; green, 0; blue, 0 }  ][line width=0.75]      (0, 0) circle [x radius= 3.35, y radius= 3.35]   ;
\draw    (180,120) -- (140,150) ;
\draw [shift={(140,150)}, rotate = 143.13] [color={rgb, 255:red, 0; green, 0; blue, 0 }  ][fill={rgb, 255:red, 0; green, 0; blue, 0 }  ][line width=0.75]      (0, 0) circle [x radius= 3.35, y radius= 3.35]   ;
\draw    (140,150) -- (140,180) ;
\draw [shift={(140,180)}, rotate = 90] [color={rgb, 255:red, 0; green, 0; blue, 0 }  ][fill={rgb, 255:red, 0; green, 0; blue, 0 }  ][line width=0.75]      (0, 0) circle [x radius= 3.35, y radius= 3.35]   ;
\draw    (180,120) -- (220,150) ;
\draw [shift={(220,150)}, rotate = 36.87] [color={rgb, 255:red, 0; green, 0; blue, 0 }  ][fill={rgb, 255:red, 0; green, 0; blue, 0 }  ][line width=0.75]      (0, 0) circle [x radius= 3.35, y radius= 3.35]   ;
\draw    (180,90) -- (180,120) ;
\draw [shift={(180,120)}, rotate = 90] [color={rgb, 255:red, 0; green, 0; blue, 0 }  ][fill={rgb, 255:red, 0; green, 0; blue, 0 }  ][line width=0.75]      (0, 0) circle [x radius= 3.35, y radius= 3.35]   ;
\draw    (180,60) -- (180,90) ;
\draw [shift={(180,90)}, rotate = 90] [color={rgb, 255:red, 0; green, 0; blue, 0 }  ][fill={rgb, 255:red, 0; green, 0; blue, 0 }  ][line width=0.75]      (0, 0) circle [x radius= 3.35, y radius= 3.35]   ;
\draw    (180,40) -- (180,60) ;
\draw [shift={(180,60)}, rotate = 90] [color={rgb, 255:red, 0; green, 0; blue, 0 }  ][fill={rgb, 255:red, 0; green, 0; blue, 0 }  ][line width=0.75]      (0, 0) circle [x radius= 3.35, y radius= 3.35]   ;
\draw  [dash pattern={on 0.84pt off 2.51pt}]  (180,40) -- (180,20) ;
\draw    (220,150) -- (250,180) ;
\draw [shift={(250,180)}, rotate = 45] [color={rgb, 255:red, 0; green, 0; blue, 0 }  ][fill={rgb, 255:red, 0; green, 0; blue, 0 }  ][line width=0.75]      (0, 0) circle [x radius= 3.35, y radius= 3.35]   ;
\draw    (360,150) -- (330,180) ;
\draw [shift={(330,180)}, rotate = 135] [color={rgb, 255:red, 0; green, 0; blue, 0 }  ][fill={rgb, 255:red, 0; green, 0; blue, 0 }  ][line width=0.75]      (0, 0) circle [x radius= 3.35, y radius= 3.35]   ;
\draw    (440,150) -- (410,180) ;
\draw [shift={(410,180)}, rotate = 135] [color={rgb, 255:red, 0; green, 0; blue, 0 }  ][fill={rgb, 255:red, 0; green, 0; blue, 0 }  ][line width=0.75]      (0, 0) circle [x radius= 3.35, y radius= 3.35]   ;
\draw    (400,120) -- (360,150) ;
\draw [shift={(360,150)}, rotate = 143.13] [color={rgb, 255:red, 0; green, 0; blue, 0 }  ][fill={rgb, 255:red, 0; green, 0; blue, 0 }  ][line width=0.75]      (0, 0) circle [x radius= 3.35, y radius= 3.35]   ;
\draw    (360,150) -- (390,180) ;
\draw [shift={(390,180)}, rotate = 45] [color={rgb, 255:red, 0; green, 0; blue, 0 }  ][fill={rgb, 255:red, 0; green, 0; blue, 0 }  ][line width=0.75]      (0, 0) circle [x radius= 3.35, y radius= 3.35]   ;
\draw    (400,120) -- (440,150) ;
\draw [shift={(440,150)}, rotate = 36.87] [color={rgb, 255:red, 0; green, 0; blue, 0 }  ][fill={rgb, 255:red, 0; green, 0; blue, 0 }  ][line width=0.75]      (0, 0) circle [x radius= 3.35, y radius= 3.35]   ;
\draw    (400,90) -- (400,120) ;
\draw [shift={(400,120)}, rotate = 90] [color={rgb, 255:red, 0; green, 0; blue, 0 }  ][fill={rgb, 255:red, 0; green, 0; blue, 0 }  ][line width=0.75]      (0, 0) circle [x radius= 3.35, y radius= 3.35]   ;
\draw    (400,60) -- (400,90) ;
\draw [shift={(400,90)}, rotate = 90] [color={rgb, 255:red, 0; green, 0; blue, 0 }  ][fill={rgb, 255:red, 0; green, 0; blue, 0 }  ][line width=0.75]      (0, 0) circle [x radius= 3.35, y radius= 3.35]   ;
\draw    (400,40) -- (400,60) ;
\draw [shift={(400,60)}, rotate = 90] [color={rgb, 255:red, 0; green, 0; blue, 0 }  ][fill={rgb, 255:red, 0; green, 0; blue, 0 }  ][line width=0.75]      (0, 0) circle [x radius= 3.35, y radius= 3.35]   ;
\draw  [dash pattern={on 0.84pt off 2.51pt}]  (400,40) -- (400,20) ;
\draw    (440,150) -- (470,180) ;
\draw [shift={(470,180)}, rotate = 45] [color={rgb, 255:red, 0; green, 0; blue, 0 }  ][fill={rgb, 255:red, 0; green, 0; blue, 0 }  ][line width=0.75]      (0, 0) circle [x radius= 3.35, y radius= 3.35]   ;

\draw (41,172.4) node [anchor=north west][inner sep=0.75pt]  [color={rgb, 255:red, 225; green, 225; blue, 225 }  ,opacity=1 ]  {$-2$};
\draw (39,142.4) node [anchor=north west][inner sep=0.75pt]  [color={rgb, 255:red, 225; green, 225; blue, 225 }  ,opacity=1 ]  {$-1$};
\draw (47,112.4) node [anchor=north west][inner sep=0.75pt]  [color={rgb, 255:red, 225; green, 225; blue, 225 }  ,opacity=1 ]  {$0$};
\draw (47,82.4) node [anchor=north west][inner sep=0.75pt]  [color={rgb, 255:red, 225; green, 225; blue, 225 }  ,opacity=1 ]  {$1$};
\draw (47,52.4) node [anchor=north west][inner sep=0.75pt]  [color={rgb, 255:red, 225; green, 225; blue, 225 }  ,opacity=1 ]  {$2$};

\end{tikzpicture}

    \end{center}
    They are clearly non-isomorphic, however, the associated $\bZ[U]$-modules  are both isomorphic to $T^\infty_{-4} \oplus T^{-2}_{-4} \oplus T_{-4}\oplus T_{-4}.$ It follows that in general $(R,\chi)$ is strictly stronger than $\bH(R,\chi)$
\end{example}

\begin{definition}\label{def:singroot}
Let $(C,o)$ be an isolated curve singularity. Its graded root $\big(R(C,o),\chi\big)$ is defined as follows: the vertices $\{v_{n,i}\}_i$ of weight $\chi = n$ correspond bijectively to the connected components $S_{n,i}$ of $S_n$; and two vertices $v_{n,i}$ and $v_{n+1,j}$ are connected by an edge, if and only if $S_{n,i}\subseteq S_{n+1,j}$.
\end{definition}

\begin{prop}
    Let $(C,o)$ be an isolated curve singularity. Then there is a canonical isomorphism
    $$\bH^0(C,o) \cong \bH\big( R(C,o),\chi \big),$$
    see \cite[Proposition 2.2.6]{AgNeCurve}.
\end{prop}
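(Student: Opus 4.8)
The plan is to check that both sides of the claimed isomorphism are, as graded abelian groups, freely generated by the connected components of the sublevel sets $\{S_n\}_n$, and that the two $U$-actions agree under this identification; the whole argument amounts to unwinding Definitions \ref{def:gradedroot} and \ref{def:singroot}.

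First I would use the elementary fact that for a space with finitely many path components the group $H^0(S_n,\bZ)$ is canonically the free abelian group on the set $\pi_0(S_n)$ of components, a basis being given by the indicator cocycles $\mathbf{1}_{S_{n,i}}$ (the cocycle equal to $1$ on the component $S_{n,i}$ and $0$ on the others). Since $S_n$ is a finite cubical complex for $n \geq \min w_0$ and empty for $n<\min w_0$, the module $\bH^0(C,o)=\bigoplus_n H^0(S_n,\bZ)$ is freely generated by $\bigsqcup_n \pi_0(S_n)$. By Definition \ref{def:singroot} the vertices of $R(C,o)$ of weight $n$ are precisely the components of $S_n$, so the vertex set $V$ is identified with $\bigsqcup_n \pi_0(S_n)$ as well; sending $\mathbf{1}_{S_{n,i}}$ to the vertex $v_{n,i}$ therefore gives an isomorphism of the underlying graded groups $\bH^0(C,o)\cong \bZ\langle V\rangle=\bH(R(C,o),\chi)$, under which the weight-$2n$ summand on the left corresponds to the span of the $\chi$-level-$n$ vertices on the right.

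Next I would verify compatibility with the $U$-actions. On the lattice cohomology side $U:\bH^0_{2n+2}\to\bH^0_{2n}$ is by definition the restriction $H^0(S_{n+1},\bZ)\to H^0(S_n,\bZ)$ induced by $S_n\hookrightarrow S_{n+1}$. Because $S_n\subseteq S_{n+1}$, each component $S_{n,i}$ is contained in a unique component $S_{n+1,j}$, and restriction acts on the indicator basis by
$$\mathbf{1}_{S_{n+1,j}}\big|_{S_n}=\sum_{\,i\,:\,S_{n,i}\subseteq S_{n+1,j}}\mathbf{1}_{S_{n,i}}.$$
On the graded root side $U\cdot v_{n+1,j}=\sum_w w$, summed over vertices $w$ of weight $n$ adjacent to $v_{n+1,j}$, which by Definition \ref{def:singroot} are exactly the $v_{n,i}$ with $S_{n,i}\subseteq S_{n+1,j}$. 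The two expressions coincide term by term, so the isomorphism is one of $\bZ[U]$-modules; note that $U$ lowers the $\chi$-level by $1$ and the weight by $2$, consistent with the grading identification. The isomorphism is canonical because neither the indicator-cocycle basis nor the component description of $V$ involves any choice — both are intrinsic to the filtration $\{S_n\}_n$.

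I expect the only points requiring care to be bookkeeping rather than substance: one must know that each component of $S_n$ lies in a single component of $S_{n+1}$ (clear from $S_n\subseteq S_{n+1}$), that the number of components at each level is finite (so that $H^0$ is the stated finite direct sum and restriction is the displayed finite sum), and, logically prior to the statement, that $\big(R(C,o),\chi\big)$ really is a graded root so that $\bH(R(C,o),\chi)$ is defined. The last point follows from Definition \ref{def:gradedroot}: the merging of components as $n$ grows forces a unique up-edge at each vertex (giving axiom (b) and the tree property), $\chi$ is bounded below since $w_0$ is by Proposition \ref{rem:prelim}, and $S_n$ has a single component for $n>0$ by the contractibility in item (4) of Theorem \ref{thm:prop}, which yields condition (c).
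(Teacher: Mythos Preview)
Your argument is correct and is precisely the natural one: identify both sides with the free abelian group on $\bigsqcup_n \pi_0(S_n)$ via indicator cocycles, and check that restriction along $S_n\hookrightarrow S_{n+1}$ matches the combinatorial $U$-action on the root. Note, however, that the paper does not supply its own proof of this proposition --- it is stated with a reference to \cite[Proposition 2.2.6]{AgNeCurve} and used as background --- so there is no in-paper argument to compare against; your write-up is essentially what the cited proof must be.
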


\noindent According to Theorem \ref{thm:MAIN}, in the case of an irreducible plane curve singularity $(C,o)$, the lattice cohomology module $\bH^0(C,o)$ determines the embedded topological type and hence the graded root $\big(R(C,o),\chi\big)$ as well. In spite of Example \ref{eg:rootlattice}  and that there is no immediate reason to expect this in the case of arbitrary curve singularities, no counterexamples are known. Hence, we propose the following: 

\begin{conj}
    Let $(C,o)$ ben isolated curve singularity. Its graded root $\big(R(C,o),\chi\big)$ is determined by the lattice cohomology module $\bH^0(C,o)$.
\end{conj}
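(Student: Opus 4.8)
The plan is to reconstruct the semigroup $\mathcal{S}_{C,o}$ — a complete embedded topological invariant by item \textit{(i)} of Proposition \ref{rem:prelim} — directly from the graded $\bZ[U]$-module $\bH^0(C,o)$. The first step is to pass from $\mathcal{S}_{C,o}$ to the weight function: by (\ref{eq:w_0valtozasa}) the sequence $\big(w_0(\ell)\big)_{\ell\geq0}$ and the semigroup determine one another (indeed $\ell\in\mathcal{S}_{C,o}$ iff $w_0(\ell+1)=w_0(\ell)+1$), so it suffices to recover the function $w_0$ from $\bH^0(C,o)$. Since $r=1$, the spaces $S_n\subset\bR_{\geq0}$ are unions of closed intervals on the half-line, and a connected component of $S_n$ is exactly a maximal run of consecutive integers on which $w_0\leq n$. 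Thus $\mathrm{rank}_{\bZ}\bH^0_{2n}(C,o)=\#\pi_0(S_n)$, and the full $\bZ[U]$-module records how these runs are born (at local minima of $w_0$) and merge (at local maxima) as $n$ grows. Concretely, writing $\bH^0(C,o)\cong T^\infty_{2\min w_0}\oplus\bigoplus_k T^{2n_k}_{2m_k}$ as in Proposition \ref{prop:directsumdecomp}, each finite summand $T^{2n_k}_{2m_k}$ is a bar recording a local minimum of value $m_k$ that merges into an older component at level $n_k+1$; in other words, $\bH^0(C,o)$ is precisely the degree-zero persistence barcode of the lattice path $w_0$.

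Next I would fix the global shape. From item \textit{(viii)} one reads $\delta(C,o)=\mathrm{rank}_{\bZ}\bH^0_{\leq0}(C,o)-1$, hence the conductor $c=2\delta$ by item \textit{(iv)}, while the multiplicity $m(C,o)=\beta_0$ is recovered by the recipe of item \textit{(vii)}. By Gorenstein symmetry (\ref{eq:Gorsym}) the path $w_0$ is symmetric about $\ell=\delta$; since $0\in\mathcal{S}_{C,o}$ it ascends on its first step, and since $c-1\notin\mathcal{S}_{C,o}$ it descends into $\ell=c$, after which it increases by $1$ at every step. These constraints pin down the behaviour of $w_0$ near the boundary and reduce the problem to reconstructing the ``half-path'' on $[0,\delta]$.

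The heart of the argument is then an inductive reconstruction of $w_0$ by depth, following the ``local minima'' technique of \cite{LCandmult,StrucProp}. Reading the barcode from the bottom up, the summands of lowest weight $2\min w_0$ locate the deepest local minima; Gorenstein symmetry fixes the position of the central feature, and the elder-rule pairing encoded in the $U$-action determines which minima sit in a common sub-comb and at what level they coalesce. Processing one weight level at a time, one peels off the deepest comb and is left with the barcode of a strictly simpler symmetric path, into which the next generator is read. I expect this to terminate after $g+1$ stages, producing the minimal generating set $\beta_0,\dots,\beta_g$ of $\mathcal{S}_{C,o}$ and thereby an explicit reconstruction algorithm; this is essentially the content promised in subsection \ref{subs:alg}.

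The main obstacle is that the $\bZ[U]$-module remembers only the \emph{weights} (the values $m_k,n_k$) of the births and merges, not the horizontal positions of the local extrema along the $\ell$-axis, nor even the full shape of the merge tree: as Example \ref{eg:rootlattice} shows, distinct graded roots — and a fortiori distinct paths — can yield the same module. The crux is therefore to prove that for an irreducible \emph{plane} germ this lost information is nonetheless forced. Here the rigidity of plane-branch semigroups must be used decisively: symmetry alone resolves the low-genus cases, but in general the inequalities $n_i\beta_i<\beta_{i+1}$ of (\ref{eq:nibetai<}) are what prevent two distinct admissible nestings of the combs from sharing a barcode. That this rigidity genuinely depends on irreducibility, rather than being a formal feature of the construction, is confirmed by the multi-branch examples of subsection \ref{subs:seifert}, where $\bH^*$ fails to be a complete invariant.
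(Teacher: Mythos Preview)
The statement you were asked to address is a \emph{Conjecture}, not a theorem: it concerns an \emph{arbitrary} isolated curve singularity $(C,o)$, possibly reducible and possibly non-planar. The paper does not prove it. It is proposed precisely because, outside the irreducible planar case, no argument is known and Example \ref{eg:rootlattice} shows the statement fails for abstract graded roots. Your entire proposal, however, is written for an irreducible plane branch: you invoke a one-variable semigroup $\mathcal{S}_{C,o}$, Gorenstein symmetry (\ref{eq:Gorsym}), the conductor $c=2\delta$, the generators $\beta_0,\dots,\beta_g$ and the inequalities (\ref{eq:nibetai<}). None of these objects exist, or exist in this form, for a general isolated curve singularity with $r>1$ branches or for non-planar germs. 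So your proposal simply does not address the conjecture as stated.

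What you have actually sketched is an approach to Theorem \ref{thm:MAIN}, and in that restricted setting your outline is in the same spirit as the paper's proof: recover $\delta$ and $m$ from $\bH^0$, exploit symmetry, and use the local-minima technique to read off semigroup elements. But even there your sketch stops at exactly the hard point. You correctly identify that the barcode forgets horizontal positions and the merge tree, and then assert that (\ref{eq:nibetai<}) ``prevents two distinct admissible nestings'' without saying how. The paper's actual mechanism is more specific and not what you describe: it isolates the set $\mathfrak{N}$ of levels $n$ for which the quotient $\bM_{\geq 2n}/T^\infty_{2\min w_0}$ has trivial $U$-action, proves (Lemma \ref{lem:Snalakja}, Proposition \ref{prop:ninNhaSn}) that for such $n$ the space $S_n$ has a rigid prescribed shape, reconstructs from this only an \emph{initial part} $\mathcal{E}=\mathcal{S}_{C,o}\cap[0,d]$ of the semigroup, and then shows via Lemmas \ref{lem:ablakos} and \ref{lem:wbetag-1} that $\beta_0,\dots,\beta_{g-1}\in\mathcal{E}$, recovering $\beta_g$ last from $\delta$. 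There is no level-by-level ``peeling of combs'' terminating in $g+1$ stages; rather, one gives up on reconstructing the whole path and proves that the controllable top portion already contains enough generators.
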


\subsection{Local minima}

Let us fix an irreducible plane curve singularity $(C,o) \subset (\mathbb{C}^2, o)$ with semigroup of values $\mathcal{S}_{C,o} \subset \bZ_{\geq 0}$ and weight function $w_0: \bZ_{\geq 0} \to \bZ$. The authors and A. Némethi defined the local minimum points of the weight function $w_0$ in \cite{LCandmult, StrucProp} as follows:

\begin{definition}\label{def:locmin}
    A lattice point $\ell \in \bZ_{\geq 0}$ is said to be a \textit{local minimum point} of the weight function $w_0$ if either $\ell = 0$, or  $w_0(\ell-1) > w_0(\ell) < w_0(\ell +1)$. (Note that  $w_0(0) =0 < w_0(1)=1$ always holds.)
    Local minimum points are precisely the elements of the set $\mathcal{S}_{C,o} \cap \big( (\bZ \setminus \mathcal{S}_{C,o})+1\big)$ by formula (\ref{eq:w_0valtozasa}). The weight $w_0(\ell)$ of a local minimum point $\ell$ is said to be a \textit{local minimum value}.
\end{definition}

\noindent Let us compare this notion with the local minimum points of the corresponding graded root.

\begin{definition} \cite[3.2 (b)]{NOSz}
    Let $R(C,o)$ be the graded root of the curve singularity $(C,o)$. The vertex $v \in V$ is said to be a local minimum point of the graded root if $\chi(v) < \chi(w)$ for all vertices $w$ adjacent to $v$. Note that, by item (b) of Definition \ref{def:gradedroot}, this is equivalent to $v$ being a leaf (i.e. a vertex with only a single neighbour) of the tree $R(C,o)$.
\end{definition}

\noindent Notice that, by its definition, a local minimum point $\ell \in \bZ_{\geq 0}$ of the weight function $w_0$ with $w_0(\ell)=n$ gives a distinct connected component $\{\ell\}$ of $S_n$. Also, $\{\ell\} \cap S_{n-1} = \emptyset$, hence the vertex corresponding to this connected component is a local minimum point with degree $\chi=n$ of the graded root $R(C, o)$. By formula  (\ref{eq:w_0valtozasa}), all local minimum points of $R(C, o)$ arise in this way. Thus we have the following bijective correspondence \cite{LCandmult,StrucProp}
\begin{equation*}
    \left\{ \begin{array}{c}
        \text{local minimum points of } w_0\\
         \text{ with weight } w_0=n
    \end{array}
\right\}
\overset{ 1-1}{\longleftrightarrow}
\left\{ \begin{array}{c}
        \text{local minimum points of } R(C, o)\\
         \text{ with degree } \chi=n
    \end{array}
\right\}.
\end{equation*}

\noindent It is also clear, that the generator of $\mathbb{H}^0_{2n}$ corresponding to the component $\{\ell\}$ of $S_n$ lives in the kernel of the $U$-action, whence we have the equalities
\begin{equation}\label{eq:locmin}
   \# \left\{ \begin{array}{c}
        \text{loc. min. pts of}\\
         w_0 \text{ with } w_0=n
    \end{array}
\right\}
= \#
\left\{ \begin{array}{c}
        \text{loc. min. pts of }\\
         R \text{ with } \chi= n
    \end{array}
\right\} = \text{rank}_\bZ\ker( U : \bH^0_{2n} \to \bH^0_{2n-2} ).
\end{equation}
That is, we can recover the number of local minimum points with a given weight from the graded $\bZ[U]$-module structure of $\bH^{0}(C, o)$.\vspace{2mm}

\subsection{Idea of proof of Theorem \ref{thm:MAIN}}\label{subs:idea} We will formulate the idea of the proof in the language of the graded root for the sake of presentation, but the actual proof will only make use of the graded $\bZ[U]$-module structure of the lattice cohomology module.\\

\noindent Let $(C,o)$ be an irreducible plane curve singularity with semigroup $\mathcal{S}_{C,o} = \langle \beta_0,\dots, \beta_g \rangle$ (as in item \textit{(v)} of Proposition \ref{rem:prelim}) and graded root $R(C,o)$. According to item \textit{(i)} of Proposition \ref{rem:prelim}, to prove Theorem \ref{thm:MAIN}, it is sufficient to recover $\mathcal{S}_{C,o}$.

We will define the set of numbers $\mathfrak{N}  \subseteq \bZ_{\geq \min w_0}$ characterized by the property that for any $n \in \mathfrak{N}$ the `truncated root' $R_{\geq n}:= \{\, v\in V \ | \ \chi(v) \geq n\,\}$ is `simple'. By this we mean the following. Define the `trimmed' root $\overline{R}:=R \setminus \{ \text{ leaves of } R\,\}$. We now say that $R_{\geq n}$ is \emph{simple} if and only if $(\overline{R})_{\geq n}$ has exactly one leaf. We are interested in this `simple' part, where the leaves behave in a controlled, separated way, since the local minimum points of the graded root correspond to semigroup elements, which we can then hope to recognize. Indeed, this will turn out to be the case.

Here, a warning is in order. First, note that $[1,\infty) \subseteq \mathfrak{N}$ by item \textit{(4)} of Theorem \ref{thm:prop}. If $\overline{R}$ has more than one leaf then $\mathfrak{N}$ has the form $[e,\infty)$ for some integer $e > \min w_0$. Otherwise, if $\overline{R}$ has only one leaf, then $(\overline{R})_{\geq n}$ has only one leaf for any $n \in \bZ$. However, in this case we set $\mathfrak{N} = [\min w_0, \infty )\cap \mathbb{Z}$ by definition.

As an example consider the semigroup $\mathcal{S}:=\langle 4, 11 \rangle$ with graded root $R$, trimmed root $\overline{R}$ and truncated root $R_{\geq -3}$ depicted below.

\begin{center}

\tikzset{every picture/.style={line width=0.75pt}} 
\tiny

\tikzset{every picture/.style={line width=0.75pt}} 
\resizebox{12cm}{!}{ 
\begin{tikzpicture}[x=0.75pt,y=0.75pt,yscale=-.6,xscale=.7]

\draw [color={rgb, 255:red, 225; green, 225; blue, 225 }  ,draw opacity=1 ]   (50,60) -- (740,60) ;
\draw [color={rgb, 255:red, 225; green, 225; blue, 225 }  ,draw opacity=1 ]   (50,90) -- (740,90) ;
\draw [color={rgb, 255:red, 225; green, 225; blue, 225 }  ,draw opacity=1 ]   (50,120) -- (740,120) ;
\draw [color={rgb, 255:red, 225; green, 225; blue, 225 }  ,draw opacity=1 ]   (50,150) -- (740,150) ;
\draw [color={rgb, 255:red, 225; green, 225; blue, 225 }  ,draw opacity=1 ]   (50,180) -- (740,180) ;
\draw [color={rgb, 255:red, 225; green, 225; blue, 225 }  ,draw opacity=1 ]   (50,210) -- (740,210) ;
\draw [color={rgb, 255:red, 225; green, 225; blue, 225 }  ,draw opacity=1 ]   (50,240) -- (740,240) ;
\draw [color={rgb, 255:red, 225; green, 225; blue, 225 }  ,draw opacity=1 ]   (50,270) -- (740,270) ;
\draw [color={rgb, 255:red, 225; green, 225; blue, 225 }  ,draw opacity=1 ]   (50,300) -- (740,300) ;
\draw    (170,210) -- (110,240) ;
\draw [shift={(110,240)}, rotate = 153.43] [color={rgb, 255:red, 0; green, 0; blue, 0 }  ][fill={rgb, 255:red, 0; green, 0; blue, 0 }  ][line width=0.75]      (0, 0) circle [x radius= 3.35, y radius= 3.35]   ;
\draw    (170,210) -- (230,240) ;
\draw [shift={(230,240)}, rotate = 26.57] [color={rgb, 255:red, 0; green, 0; blue, 0 }  ][fill={rgb, 255:red, 0; green, 0; blue, 0 }  ][line width=0.75]      (0, 0) circle [x radius= 3.35, y radius= 3.35]   ;
\draw    (170,210) -- (140,240) ;
\draw [shift={(140,240)}, rotate = 135] [color={rgb, 255:red, 0; green, 0; blue, 0 }  ][fill={rgb, 255:red, 0; green, 0; blue, 0 }  ][line width=0.75]      (0, 0) circle [x radius= 3.35, y radius= 3.35]   ;
\draw    (170,210) -- (170,240) ;
\draw [shift={(170,240)}, rotate = 90] [color={rgb, 255:red, 0; green, 0; blue, 0 }  ][fill={rgb, 255:red, 0; green, 0; blue, 0 }  ][line width=0.75]      (0, 0) circle [x radius= 3.35, y radius= 3.35]   ;
\draw    (170,210) -- (200,240) ;
\draw [shift={(200,240)}, rotate = 45] [color={rgb, 255:red, 0; green, 0; blue, 0 }  ][fill={rgb, 255:red, 0; green, 0; blue, 0 }  ][line width=0.75]      (0, 0) circle [x radius= 3.35, y radius= 3.35]   ;
\draw    (200,240) -- (200,270) ;
\draw [shift={(200,270)}, rotate = 90] [color={rgb, 255:red, 0; green, 0; blue, 0 }  ][fill={rgb, 255:red, 0; green, 0; blue, 0 }  ][line width=0.75]      (0, 0) circle [x radius= 3.35, y radius= 3.35]   ;
\draw    (170,240) -- (170,270) ;
\draw [shift={(170,270)}, rotate = 90] [color={rgb, 255:red, 0; green, 0; blue, 0 }  ][fill={rgb, 255:red, 0; green, 0; blue, 0 }  ][line width=0.75]      (0, 0) circle [x radius= 3.35, y radius= 3.35]   ;
\draw    (140,240) -- (140,270) ;
\draw [shift={(140,270)}, rotate = 90] [color={rgb, 255:red, 0; green, 0; blue, 0 }  ][fill={rgb, 255:red, 0; green, 0; blue, 0 }  ][line width=0.75]      (0, 0) circle [x radius= 3.35, y radius= 3.35]   ;
\draw    (170,180) -- (170,210) ;
\draw [shift={(170,210)}, rotate = 90] [color={rgb, 255:red, 0; green, 0; blue, 0 }  ][fill={rgb, 255:red, 0; green, 0; blue, 0 }  ][line width=0.75]      (0, 0) circle [x radius= 3.35, y radius= 3.35]   ;
\draw    (170,150) -- (170,180) ;
\draw [shift={(170,180)}, rotate = 90] [color={rgb, 255:red, 0; green, 0; blue, 0 }  ][fill={rgb, 255:red, 0; green, 0; blue, 0 }  ][line width=0.75]      (0, 0) circle [x radius= 3.35, y radius= 3.35]   ;
\draw    (170,120) -- (170,150) ;
\draw [shift={(170,150)}, rotate = 90] [color={rgb, 255:red, 0; green, 0; blue, 0 }  ][fill={rgb, 255:red, 0; green, 0; blue, 0 }  ][line width=0.75]      (0, 0) circle [x radius= 3.35, y radius= 3.35]   ;
\draw    (170,90) -- (170,120) ;
\draw [shift={(170,120)}, rotate = 90] [color={rgb, 255:red, 0; green, 0; blue, 0 }  ][fill={rgb, 255:red, 0; green, 0; blue, 0 }  ][line width=0.75]      (0, 0) circle [x radius= 3.35, y radius= 3.35]   ;
\draw    (170,60) -- (170,90) ;
\draw [shift={(170,90)}, rotate = 90] [color={rgb, 255:red, 0; green, 0; blue, 0 }  ][fill={rgb, 255:red, 0; green, 0; blue, 0 }  ][line width=0.75]      (0, 0) circle [x radius= 3.35, y radius= 3.35]   ;
\draw    (170,40) -- (170,60) ;
\draw [shift={(170,60)}, rotate = 90] [color={rgb, 255:red, 0; green, 0; blue, 0 }  ][fill={rgb, 255:red, 0; green, 0; blue, 0 }  ][line width=0.75]      (0, 0) circle [x radius= 3.35, y radius= 3.35]   ;
\draw  [dash pattern={on 0.84pt off 2.51pt}]  (170,40) -- (170,20) ;
\draw    (170,150) -- (140,180) ;
\draw [shift={(140,180)}, rotate = 135] [color={rgb, 255:red, 0; green, 0; blue, 0 }  ][fill={rgb, 255:red, 0; green, 0; blue, 0 }  ][line width=0.75]      (0, 0) circle [x radius= 3.35, y radius= 3.35]   ;
\draw    (170,90) -- (140,120) ;
\draw [shift={(140,120)}, rotate = 135] [color={rgb, 255:red, 0; green, 0; blue, 0 }  ][fill={rgb, 255:red, 0; green, 0; blue, 0 }  ][line width=0.75]      (0, 0) circle [x radius= 3.35, y radius= 3.35]   ;
\draw    (170,150) -- (200,180) ;
\draw [shift={(200,180)}, rotate = 45] [color={rgb, 255:red, 0; green, 0; blue, 0 }  ][fill={rgb, 255:red, 0; green, 0; blue, 0 }  ][line width=0.75]      (0, 0) circle [x radius= 3.35, y radius= 3.35]   ;
\draw    (170,90) -- (200,120) ;
\draw [shift={(200,120)}, rotate = 45] [color={rgb, 255:red, 0; green, 0; blue, 0 }  ][fill={rgb, 255:red, 0; green, 0; blue, 0 }  ][line width=0.75]      (0, 0) circle [x radius= 3.35, y radius= 3.35]   ;
\draw    (410,210) -- (380,240) ;
\draw [shift={(380,240)}, rotate = 135] [color={rgb, 255:red, 0; green, 0; blue, 0 }  ][fill={rgb, 255:red, 0; green, 0; blue, 0 }  ][line width=0.75]      (0, 0) circle [x radius= 3.35, y radius= 3.35]   ;
\draw    (410,210) -- (410,240) ;
\draw [shift={(410,240)}, rotate = 90] [color={rgb, 255:red, 0; green, 0; blue, 0 }  ][fill={rgb, 255:red, 0; green, 0; blue, 0 }  ][line width=0.75]      (0, 0) circle [x radius= 3.35, y radius= 3.35]   ;
\draw    (410,210) -- (440,240) ;
\draw [shift={(440,240)}, rotate = 45] [color={rgb, 255:red, 0; green, 0; blue, 0 }  ][fill={rgb, 255:red, 0; green, 0; blue, 0 }  ][line width=0.75]      (0, 0) circle [x radius= 3.35, y radius= 3.35]   ;
\draw    (410,180) -- (410,210) ;
\draw [shift={(410,210)}, rotate = 90] [color={rgb, 255:red, 0; green, 0; blue, 0 }  ][fill={rgb, 255:red, 0; green, 0; blue, 0 }  ][line width=0.75]      (0, 0) circle [x radius= 3.35, y radius= 3.35]   ;
\draw    (410,150) -- (410,180) ;
\draw [shift={(410,180)}, rotate = 90] [color={rgb, 255:red, 0; green, 0; blue, 0 }  ][fill={rgb, 255:red, 0; green, 0; blue, 0 }  ][line width=0.75]      (0, 0) circle [x radius= 3.35, y radius= 3.35]   ;
\draw    (410,120) -- (410,150) ;
\draw [shift={(410,150)}, rotate = 90] [color={rgb, 255:red, 0; green, 0; blue, 0 }  ][fill={rgb, 255:red, 0; green, 0; blue, 0 }  ][line width=0.75]      (0, 0) circle [x radius= 3.35, y radius= 3.35]   ;
\draw    (410,90) -- (410,120) ;
\draw [shift={(410,120)}, rotate = 90] [color={rgb, 255:red, 0; green, 0; blue, 0 }  ][fill={rgb, 255:red, 0; green, 0; blue, 0 }  ][line width=0.75]      (0, 0) circle [x radius= 3.35, y radius= 3.35]   ;
\draw    (410,60) -- (410,90) ;
\draw [shift={(410,90)}, rotate = 90] [color={rgb, 255:red, 0; green, 0; blue, 0 }  ][fill={rgb, 255:red, 0; green, 0; blue, 0 }  ][line width=0.75]      (0, 0) circle [x radius= 3.35, y radius= 3.35]   ;
\draw    (410,40) -- (410,60) ;
\draw [shift={(410,60)}, rotate = 90] [color={rgb, 255:red, 0; green, 0; blue, 0 }  ][fill={rgb, 255:red, 0; green, 0; blue, 0 }  ][line width=0.75]      (0, 0) circle [x radius= 3.35, y radius= 3.35]   ;
\draw  [dash pattern={on 0.84pt off 2.51pt}]  (410,40) -- (410,20) ;
\draw    (630,180) -- (630,210) ;
\draw [shift={(630,210)}, rotate = 90] [color={rgb, 255:red, 0; green, 0; blue, 0 }  ][fill={rgb, 255:red, 0; green, 0; blue, 0 }  ][line width=0.75]      (0, 0) circle [x radius= 3.35, y radius= 3.35]   ;
\draw    (630,150) -- (630,180) ;
\draw [shift={(630,180)}, rotate = 90] [color={rgb, 255:red, 0; green, 0; blue, 0 }  ][fill={rgb, 255:red, 0; green, 0; blue, 0 }  ][line width=0.75]      (0, 0) circle [x radius= 3.35, y radius= 3.35]   ;
\draw    (630,120) -- (630,150) ;
\draw [shift={(630,150)}, rotate = 90] [color={rgb, 255:red, 0; green, 0; blue, 0 }  ][fill={rgb, 255:red, 0; green, 0; blue, 0 }  ][line width=0.75]      (0, 0) circle [x radius= 3.35, y radius= 3.35]   ;
\draw    (630,90) -- (630,120) ;
\draw [shift={(630,120)}, rotate = 90] [color={rgb, 255:red, 0; green, 0; blue, 0 }  ][fill={rgb, 255:red, 0; green, 0; blue, 0 }  ][line width=0.75]      (0, 0) circle [x radius= 3.35, y radius= 3.35]   ;
\draw    (630,60) -- (630,90) ;
\draw [shift={(630,90)}, rotate = 90] [color={rgb, 255:red, 0; green, 0; blue, 0 }  ][fill={rgb, 255:red, 0; green, 0; blue, 0 }  ][line width=0.75]      (0, 0) circle [x radius= 3.35, y radius= 3.35]   ;
\draw    (630,40) -- (630,60) ;
\draw [shift={(630,60)}, rotate = 90] [color={rgb, 255:red, 0; green, 0; blue, 0 }  ][fill={rgb, 255:red, 0; green, 0; blue, 0 }  ][line width=0.75]      (0, 0) circle [x radius= 3.35, y radius= 3.35]   ;
\draw  [dash pattern={on 0.84pt off 2.51pt}]  (630,40) -- (630,20) ;
\draw    (630,150) -- (600,180) ;
\draw [shift={(600,180)}, rotate = 135] [color={rgb, 255:red, 0; green, 0; blue, 0 }  ][fill={rgb, 255:red, 0; green, 0; blue, 0 }  ][line width=0.75]      (0, 0) circle [x radius= 3.35, y radius= 3.35]   ;
\draw    (630,90) -- (600,120) ;
\draw [shift={(600,120)}, rotate = 135] [color={rgb, 255:red, 0; green, 0; blue, 0 }  ][fill={rgb, 255:red, 0; green, 0; blue, 0 }  ][line width=0.75]      (0, 0) circle [x radius= 3.35, y radius= 3.35]   ;
\draw    (630,150) -- (660,180) ;
\draw [shift={(660,180)}, rotate = 45] [color={rgb, 255:red, 0; green, 0; blue, 0 }  ][fill={rgb, 255:red, 0; green, 0; blue, 0 }  ][line width=0.75]      (0, 0) circle [x radius= 3.35, y radius= 3.35]   ;
\draw    (630,90) -- (660,120) ;
\draw [shift={(660,120)}, rotate = 45] [color={rgb, 255:red, 0; green, 0; blue, 0 }  ][fill={rgb, 255:red, 0; green, 0; blue, 0 }  ][line width=0.75]      (0, 0) circle [x radius= 3.35, y radius= 3.35]   ;

\draw (27,52.4) node [anchor=north west][inner sep=0.75pt]  [color={rgb, 255:red, 225; green, 225; blue, 225 }  ,opacity=1 ]  {$2$};
\draw (27,82.4) node [anchor=north west][inner sep=0.75pt]  [color={rgb, 255:red, 225; green, 225; blue, 225 }  ,opacity=1 ]  {$1$};
\draw (27,112.4) node [anchor=north west][inner sep=0.75pt]  [color={rgb, 255:red, 225; green, 225; blue, 225 }  ,opacity=1 ]  {$0$};
\draw (17,142.4) node [anchor=north west][inner sep=0.75pt]  [color={rgb, 255:red, 225; green, 225; blue, 225 }  ,opacity=1 ]  {$-1$};
\draw (17,172.4) node [anchor=north west][inner sep=0.75pt]  [color={rgb, 255:red, 225; green, 225; blue, 225 }  ,opacity=1 ]  {$-2$};
\draw (17,202.4) node [anchor=north west][inner sep=0.75pt]  [color={rgb, 255:red, 225; green, 225; blue, 225 }  ,opacity=1 ]  {$-3$};
\draw (17,232.4) node [anchor=north west][inner sep=0.75pt]  [color={rgb, 255:red, 225; green, 225; blue, 225 }  ,opacity=1 ]  {$-4$};
\draw (17,262.4) node [anchor=north west][inner sep=0.75pt]  [color={rgb, 255:red, 225; green, 225; blue, 225 }  ,opacity=1 ]  {$-5$};
\draw (17,292.4) node [anchor=north west][inner sep=0.75pt]  [color={rgb, 255:red, 225; green, 225; blue, 225 }  ,opacity=1 ]  {$-6$};
\draw (136,22.4) node [anchor=north west][inner sep=0.75pt]    {$R$};
\draw (375,18.4) node [anchor=north west][inner sep=0.75pt]    {$\overline{R}$};
\draw (564,22.4) node [anchor=north west][inner sep=0.75pt]    {$R_{\geq -3}$};

\end{tikzpicture}
}

\end{center}

\noindent It is easy to see that $\mathfrak{N} = [-3,\infty)\cap \mathbb{Z}$, as $(\overline{R})_{\geq -3}$ is an infinite path having exactly one vertex of degree $n$ for all $n \in [-3,\infty)\cap \mathbb{Z}$, whereas  $(\overline{R})_{\geq -4} = \overline{R}$ has three leaves (of degree $\chi=-4$).

As already hinted before, the significance of this simplicity lies in the fact that for any $n \in \mathfrak{N}$, the root $R_{\geq n}$ uniquely determines the elements $s \in \mathcal{S}_{C,o}$ with $w(s) \geq n$. The \emph{initial part} of the semigroup is then defined as $\mathcal{E} := \{\, s \in \mathcal{S}_{C,o}\ | \ s \leq \delta \text{ and } w(s) \geq \min \mathfrak{N} \,\}$. The name is justified by the fact that it has the form $\mathcal{E}= \mathcal{S}_{C,o}\cap [0,d]$ for some $d \in \bZ_{\geq 0}$.

As a continuation of the above example, consider the subset $\mathcal{S}' \subset \bZ_{\geq 0}$ given as
$$\mathcal{S}' = \{\,0,\,4,\,9,\,10,\,12,\,15,\,16,\,18,\,21,\,22,\,23,\,24,\,26,\,27,\,28,\,k \geq 30 \,\}. $$
Note that the set $\mathcal{S}'$ is not a semigroup. Nonetheless one may define its weight function $w'$ (and hence its graded root $R'$) according to item \textit{(vi)} of Proposition \ref{rem:prelim}. The image below depicts the semigroup $\mathcal{S} = \langle 4,11\rangle$ in blue and the set $\mathcal{S}'$ in red. Shown underneath are their respective weight functions, and their common graded root $R \cong R'$ is represented on the right.\vspace{2mm}

\begin{center}
\tikzset{every picture/.style={line width=0.75pt}} 

\resizebox{10cm}{!}{ 
\scriptsize
\vspace{2mm}
}
\end{center}

\noindent This shows that the graded root is insufficient to distinguish between arbitrary cofinite subsets of the natural numbers. On the other hand we have $\mathfrak{N}=[-3,\infty)\cap \mathbb{Z}$ as mentioned earlier and indeed, the initial parts $\mathcal{E} = \mathcal{E}' = \{0,4\}$ are uniquely determined by $R \cong R'$.

It will turn out that, in the case of graded roots associated to semigroups of irreducible plane curve singularities, the initial part $\mathcal{E}$ contains all generators of $\mathcal{S}_{C,o}$ except for maybe $\beta_g$. This is sufficient to reconstruct the entire numerical semigroup $\mathcal{S}_{C,o}$, since it is uniquely determined by $\beta_0,\dots,\beta_{g-1}$ and $\delta(C,o)$ (which is also computable by item \textit{(viii)} of Proposition \ref{rem:prelim}).

In the previous example for instance we already know $4 \in \mathcal{S}$ and if we proved, that we have a single generator left, we could compute it from the delta invariant.

\subsection{The initial part of the semigroup} In this subsection we will prove that we can read off the initial part $\mathcal{E}$ of the semigroup $\mathcal{S}_{C, o}$ corresponding to the simple part of the graded root containing only simple leaves.

\begin{definition}\label{def:initialpart}
    Let $\bM_{\geq 2n}$ be the $\bZ[U]$-submodule of $\bH^0$ generated by $\bH^0_{\geq 2n} := \bigoplus_{k \geq n}\bH^0_{2k}$. Note that $\bH^0_{\geq 2n}$ is not a $\bZ[U]$-submodule of $\bH^0$ in general, since $U : \bH^0_{2n} \to \bH^0_{2n-2}$ might be nonzero. In fact, in the direct sum decomposition of Proposition \ref{prop:directsumdecomp}, $\mathbb{M}_{\geq 2n}$ corresponds to the submodule $T_{2 \min w_0}^{\infty} \oplus \bigoplus_{n_k \geq n}T_{m_k}^{n_k} \leq \mathbb{H}^0(C, o)$.
    
    Let us consider the set $\mathfrak{N} \subset \mathbb{Z}_{\geq {\rm min} w}$ of numbers $n$ such that the $U$-action is trivial on the quotient module $\bM_{\geq 2n} \big/ T_{2\min w_0}^\infty$. For fixed $n \geq \min w_0$, this condition is equivalent to any of the following ones:
    \begin{itemize}
        \item the quotient module $\bM_{\geq 2n} \big/ T_{2\min w_0}^\infty$ cannot contain summands of form $T_{2m_k}^{2n_k}$ with $n_k \neq m_k$;
        \item the trimmed and then truncated root $(\overline{R})_{\geq n}$ has exactly one leaf (c.f. subsection \ref{subs:idea}); 
        \item $U \cdot \bM_{\geq 2n} \subset  T_{2\min w_0}^\infty$ (this clearly shows that if $n \in \mathfrak{N}$, then $n+1 \in \mathfrak{N}$ as well).
    \end{itemize} 
    Let us denote the minimal element of $\mathfrak{N}$ by $e$. If we consider the unique (up to isomorphism) direct sum decomposition $\bH^0(C,o) \cong T_{2\min w_0}^\infty\oplus \bigoplus_k T_{2m_k}^{2n_k}$ of Proposition \ref{prop:directsumdecomp}, then 
    \begin{equation}\label{eq:eadirsumdecompbol}
        e=\min \{\, n \in \mathbb{Z}_{\geq \min w_0}\ | \ \forall n_k \geq n:\  m_k =n_k\}
    \end{equation}
    For any integer $n \in \mathfrak{N}$, we define the \emph{$n^{th}$ initial part} of the semigroup $\mathcal{S}_{C, o}$ as    
    $$\mathcal{E}_n : = \{\, s \in \mathcal{S}_{C, o}\ | \ w(s) \geq n \text{ and } s \leq \delta \,\} \text{ and set } \mathcal{E}:= \mathcal{E}_e. $$
    Altough not evident, we will show later in Proporsition \ref{prop:kezdoszelet}, that for every $n \in \mathfrak{N}$, the set $\mathcal{E}_n$ is indeed of the form $\mathcal{S}_{C, o} \cap [0, d_n]$ for some $d_n \in \mathbb{N}$.
\end{definition}

\noindent It clearly follows from the Nonpositivity Theorem (Theorem \ref{thm:prop} \textit{(4)}) that $[1,\infty) \cap \mathbb{Z} \subseteq \mathfrak{N}$. Moreover, we have the following statement.

\begin{prop}\label{prop:0inN}
     $0 \in \mathfrak{N}$ and, thus, $\mathcal{E}_0$ is defined and has $0 \in \mathcal{E}_0$.
\end{prop}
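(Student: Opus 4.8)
The plan is to reduce $0 \in \mathfrak{N}$ to a combinatorial statement about $w_0$ and then prove the latter from the semigroup structure. By the second and third characterizations in Definition \ref{def:initialpart}, it suffices to show that $(\overline{R})_{\geq 0}$ has exactly one leaf, equivalently that $U\cdot\bM_{\geq 0}\subseteq T^\infty_{2\min w_0}$. First I would record the consequence of contractibility (Theorem \ref{thm:prop}(4)): since $S_n$ is connected for every $n>0$, there is a unique vertex of each weight $\geq 1$ and the $U$-action is onto there, so in the decomposition of Proposition \ref{prop:directsumdecomp} every finite summand $T^{2n_k}_{2m_k}$ has $n_k\leq 0$. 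Hence the only summands that could obstruct $0\in\mathfrak{N}$ are those with $n_k=0$ and $m_k<0$; translated back, such a summand is exactly a connected component of $S_0$, distinct from the one carrying the infinite tower, that meets $S_{-1}$. Thus the proposition is equivalent to the \emph{single-valley property}: at most one connected component of $S_0$ meets $S_{-1}$.

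If $m(C,o)\leq 2$ then $\min w_0=0$ by Proposition \ref{rem:prelim}(vii), so $S_{-1}=\emptyset$, no component dips below $0$, and $(\overline{R})_{\geq 0}$ is the single upward ray with unique leaf $v_1$; so $0\in\mathfrak{N}$ trivially. Assume therefore $m\geq 3$. Each component of $S_0$ meeting $S_{-1}$ contains a local minimum point $\ell$ of $w_0$ with $w_0(\ell)<0$, and by Definition \ref{def:locmin} every such $\ell$ lies in $\mathcal{S}_{C,o}$. By Corollary \ref{cor:w(s)leq0} all of $\mathcal{S}_{C,o}\cap[0,c]$ already lies in $S_0$, so the single-valley property follows once I prove the \emph{key lemma}: $w_0(\ell)\leq 0$ for all $\ell\in[2,c-2]$. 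Granting this, the only lattice points of $[0,c]$ with $w_0\geq 1$ are $\ell=1$ and $\ell=c-1$, so $S_0=\{0\}\sqcup[2,c-2]\sqcup\{c\}$, whose unique non-trivial component $[2,c-2]$ is the only one meeting $S_{-1}$.

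The heart of the matter is the key lemma, and this is the step I expect to be the main obstacle, because the topological input (contractibility of $S_n$, $n>0$) only excludes barriers of height $\geq 2$ and says nothing about height-$1$ barriers; genuine semigroup arithmetic is required. By formula (\ref{eq:wfromS}), $w_0(\ell)\leq 0$ is equivalent to $|\mathcal{S}_{C,o}\cap[0,\ell)|\leq \ell/2$, and a height-$1$ interior barrier would produce an $s=p-1\in(0,c)$ lying in $\mathcal{S}_{C,o}$ with $w_0(s)=0$. Using Gorenstein symmetry (\ref{eq:Gorsym}) I would reduce to $0<s<c/2$, so that $2s<c$. From $w_0(s)=0$ one gets $|\mathcal{S}_{C,o}\cap[0,s)|=s/2$; a count via (\ref{eq:Gorsym}) then upgrades the inclusion $s+(\mathcal{S}_{C,o}\cap[0,c-s))\subseteq \mathcal{S}_{C,o}\cap[s,c)$ to an equality. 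Iterating this shows that $s$ acts as a \emph{pseudo-period}: a point of $[0,c)$ lies in $\mathcal{S}_{C,o}$ if and only if its residue modulo $s$ lies in $R_0:=\mathcal{S}_{C,o}\cap[0,s)$, a set of size $s/2$. Closure of $\mathcal{S}_{C,o}$ under addition (here $2s<c$ is used) forces $R_0$ to be a finite submonoid, hence a subgroup, of $\bZ/s\bZ$ of index $2$, namely the even residues. Therefore $\mathcal{S}_{C,o}\cap[0,c)$ consists precisely of the even numbers, so $2\in\mathcal{S}_{C,o}$ and $m(C,o)=2$, contradicting $m\geq 3$. This proves the key lemma, hence $0\in\mathfrak{N}$.

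Once $0\in\mathfrak{N}$ the set $\mathcal{E}_0$ is defined, and $0\in\mathcal{E}_0$ is immediate: $0\in\mathcal{S}_{C,o}$, $w(0)=w_0(0)=0\geq 0$, and $0\leq\delta$.
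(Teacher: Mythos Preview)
Your proof is correct and reaches the same intermediate target as the paper: for $m\geq 3$ one has $w_0(\ell)\leq 0$ on $[2,c-2]$, whence $S_0=\{0\}\sqcup[2,c-2]\sqcup\{c\}$ and $0\in\mathfrak{N}$. The route, however, is genuinely different. The paper proves a reusable ``window'' lemma (Lemma~\ref{lem:ablakos}): for any $\ell\leq\delta$, $\max\big(w|_{[\ell,\delta]}\big)=\max\big(w|_{[\ell,\min(\ell+m-1,\delta)]}\big)$. Since $w_0$ is nonpositive on the short window $[2,m+1]$ (an immediate check from $1,\dots,m-1\notin\mathcal{S}$ and $m\in\mathcal{S}$), the lemma propagates this bound to all of $[2,\delta]$, and Gorenstein symmetry finishes the job. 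Your argument instead derives a global periodicity: a hypothetical interior barrier yields $s\in\mathcal{S}\cap(0,c/2)$ with $w_0(s)=0$; the counting equality $s+\big(\mathcal{S}\cap[0,c-s)\big)=\mathcal{S}\cap[s,c)$ then makes $s$ a period of $\mathcal{S}\cap[0,c)$, forcing $\mathcal{S}\cap[0,s)$ to be the index-$2$ subgroup of $\bZ/s\bZ$ and hence $2\in\mathcal{S}$, a contradiction. Both are clean; the paper's approach has the advantage that Lemma~\ref{lem:ablakos} is invoked again later (in Lemma~\ref{lem:wbetag-1}), so it earns its keep as infrastructure, whereas your periodicity argument is self-contained and gives an appealing structural picture but does not feed into the subsequent proofs. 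One small point worth making explicit in your write-up: the reduction to $s<c/2$ is obtained by taking the \emph{smallest} $p\in[2,c-2]$ with $w_0(p)=1$, so that $c-p$ is also a barrier and hence $p\leq c/2$; this guarantees the strict inequality $2s<c$ needed in your submonoid step.
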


\noindent Before proving Proposition \ref{prop:0inN} we show the following useful technical lemma.

\begin{lemma}\label{lem:ablakos}
    Let $\mathcal{S}$ be a symmetric numerical semigroup with multiplicity $m$ and let $\ell \leq \delta$ be an integer. Then
    $$\max \big( w\big|_{[\ell,\min(\ell+m-1,\delta)]} \big) =  \max\big(w\big|_{[\ell,\delta]}\big).$$
\end{lemma}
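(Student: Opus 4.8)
The plan is to reduce the statement to the integer weight function $w_0$ and then show that the maximum of $w_0$ over the long interval $[\ell,\delta]$ is always realized inside the short window $[\ell,\ell+m-1]$. First I would record the harmless reduction to lattice points: since $r=1$ here, the extended weight satisfies $w(x)=\max\{w_0(\lfloor x\rfloor),w_0(\lceil x\rceil)\}$, so for any interval with integer endpoints $a\le b$ one has $\max_{[a,b]}w=\max_{j\in[a,b]\cap\bZ}w_0(j)$. As the endpoints $\ell$, $\min(\ell+m-1,\delta)$, $\delta$ are integers (and, clearly, $0\le\ell\le\delta$), it suffices to prove the identity for $w_0$ on integer points; moreover the case $\ell+m-1\ge\delta$ is trivial, so I may assume $\ell+m\le\delta$.

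The heart of the argument is the following claim: for every integer $p$ with $m\le p\le\delta$ one has $w_0(p-m)\ge w_0(p)$. Granting this, the lemma follows quickly: pick the smallest $p\in[\ell,\delta]\cap\bZ$ at which $w_0$ attains its maximum over $[\ell,\delta]$. If $p\ge\ell+m$, then $p-m\in[\ell,\delta]$, and the claim gives $w_0(p-m)\ge w_0(p)=\max_{[\ell,\delta]}w_0$, contradicting the minimality of $p$. Hence $p\le\ell+m-1$, so the maximum is already attained in the window $[\ell,\min(\ell+m-1,\delta)]$, which is exactly the assertion.

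Proving the claim is where I expect the main obstacle to lie, and it is the step where the symmetry of $\mathcal{S}$ genuinely enters. By \eqref{eq:w_0valtozasa}, telescoping the $\pm1$ steps gives $w_0(p)-w_0(p-m)=2A-m$ with $A:=\#\big([p-m,p)\cap\mathcal{S}\big)$, so I must show $A\le m/2$. For this I would combine two facts. The first is monotonicity of sliding windows: for $g(x):=\#\big([x,x+m)\cap\mathcal{S}\big)$ one has $g(x+1)-g(x)=[x+m\in\mathcal{S}]-[x\in\mathcal{S}]\ge0$, because $m\in\mathcal{S}$ and closure under addition force $x\in\mathcal{S}\Rightarrow x+m\in\mathcal{S}$; hence $g$ is non-decreasing. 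The second is Gorenstein symmetry (item \textit{(iv)} of Proposition \ref{rem:prelim}, with $c=2\delta$): the order-reversing involution $\iota(\ell)=2\delta-1-\ell$ carries $[p-m,p)$ bijectively onto $[2\delta-p,2\delta-p+m)$ and interchanges $\mathcal{S}$ with its complement, whence $A=m-g(2\delta-p)$.

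To glue the two facts I use $p\le\delta$, which yields $p-m\le 2\delta-p$; monotonicity then gives $A=g(p-m)\le g(2\delta-p)=m-A$, i.e.\ $A\le m/2$, proving the claim. It is worth emphasizing that the window-count monotonicity relies only on $m\in\mathcal{S}$, the reflection relies only on symmetry, and the hypothesis $p\le\delta$ is consumed precisely in the comparison $p-m\le 2\delta-p$; everything else is bookkeeping with the step formula \eqref{eq:w_0valtozasa}.
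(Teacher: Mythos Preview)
Your proof is correct and follows essentially the same route as the paper: both reduce the statement to the inequality $w_0(p-m)\ge w_0(p)$ for $p\le\delta$, rewrite this as $A:=\#([p-m,p)\cap\mathcal{S})\le m/2$, and obtain the latter by combining the monotonicity of the sliding window count $g(x)=\#([x,x+m)\cap\mathcal{S})$ (from $m\in\mathcal{S}$) with the reflection $A=m-g(2\delta-p)$ coming from Gorenstein symmetry. The only cosmetic differences are that the paper phrases the key inequality by contradiction and then extends the window $[\ell,\ell+m-1]$ to $[\ell,\delta]$ by an explicit induction in steps of length $m$, whereas you prove the inequality directly and finish with a cleaner minimality argument; neither variant introduces any new idea.
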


\begin{proof}
    Assume that $\ell+m-1 < \delta$, otherwise there is nothing to prove. Given any lattice point $\ell' \in \big[\ell+m, \min(\ell+2m-1,\delta)\big]$, we claim that $w(\ell'-m) \geq w(\ell')$. In order to prove it, we assume by contradiction that $w(\ell'-m) < w(\ell')$. By part \textit{(vi)} of Proposition \ref{rem:prelim}, this is equivalent to
    $\#\big( \mathcal{S}\cap[\ell'-m,\ell'-1] \big) > m/2$. On the other hand, we have that
    \begin{equation}\label{eq:this}
        \#\big( \mathcal{S}\cap[\ell'-m,\ell'-1]\big) \leq \#\big( \mathcal{S}\cap[\ell'-m+1,\ell']\big) \leq \dots \leq \#\big( \mathcal{S}\cap[c-\ell',c-1-\ell'+m]\big).
    \end{equation}  
    Indeed, since all of these intervals have length $m \in \mathcal{S}$, if $\ell' -m +k \in \mathcal{S}$, then so is $\ell' +k \in \mathcal{S}$ for any $k \geq 0$. However, equation (\ref{eq:this}) contradicts the symmetry (\ref{eq:Gorsym}) of $\mathcal{S}$ as
    \begin{align*}
        \frac{m}{2}< \#\big( \mathcal{S}\cap[\ell'-m,\ell'-1] \big) \leq &\, \#\big( \mathcal{S}\cap[c-\ell',c-1-\ell'+m]\big)\\
        = & \, m-\#\big( \mathcal{S}\cap[\ell'-m,\ell'-1] \big) < m-\frac{m}{2} = \frac{m}{2}.
    \end{align*}
    It follows that $\max\big( w\big|_{[\ell,\ell+m-1]}\big) =\max\big( w\big|_{[\ell,\min(\ell + 2m-1,\delta)]}\big)$. Continuing this process inductively we reach the desired equality $\max\big(w\big|_{[\ell,\ell+m-1]}\big) = \max\big( w\big|_{[\ell,\delta]}\big)$.
\end{proof}

\begin{proof}[Proof of Proposition \ref{prop:0inN}]
    If the multiplicity $m=2$, then by Proposition 2.7.3 (c) in \cite{StrucProp} we have $\mathbb{H}^0_{2n}=0$ for all $n <0$ (more precisely: $w \geq 0$) and hence $U \cdot \mathbb{H}^0_0=0$. Therefore $U \cdot \mathbb{M}_{\geq 0}= U \cdot \mathbb{M}_{\geq 1} \subset T_{0}^\infty$, so $0 \in \mathfrak{N}$. 

    If $m \geq 3$, then $w_0(\ell) = 2-\ell$ for every $\ell \in [2, m] \cap \mathbb{Z}$, whereas $w_0(m+1) = 3-m \leq0$. Therefore, $\max \big(w \big|_{[2, m+1]}\big)=0$, and hence, by Lemma \ref{lem:ablakos}, $\max \big( w\big|_{[2, \delta]}\big)=0$. This, and the Gorenstein symmetry (\ref{eq:Gorsym}), implies that in this case $S_0 = \{ 0 \} \cup [2, c-2] \cup \{c\}$, so clearly $U \cdot \mathbb{M}_{\geq 0} \subset T_{2 \min w_0}^{\infty}$.
\end{proof}

\noindent Next, we will characterize the $S_n$-spaces with $n \in \mathfrak{N}$. First we need the following simple lemma.

\begin{lemma} \label{lem:interval}\,

\begin{itemize}
    \item[a)] If the interval $[s, s+k]\ (s, k \in \mathbb{Z}_{\geq 0})$ is a connected component of $ S_n$, with $k>0$, then $[s, s+k]\cap S_{n-1} \neq \emptyset$ (hence the corresponding generator of $\mathbb{H}^0_{2n}$ is not in the kernel of the $U$-action).
    \item[b)] Similarly, if $(s, s+k) \ (s, k \in \mathbb{Z}_{\geq 0})$ is a connected component of $\mathbb{R} \setminus S_n$, with $k >2$, then $(s, s+k) \cap \big( \mathbb{R} \setminus S_{n+1}\big) \neq \emptyset$. 
\end{itemize}
    
\end{lemma}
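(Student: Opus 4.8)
The plan is to reduce both parts to the single elementary fact (\ref{eq:w_0valtozasa}), namely that $w_0$ changes by exactly $\pm 1$ between consecutive lattice points. First I would record the explicit shape of $S_n$ in the one-variable setting that comes directly from the definitions of $S_n$ and of the extended weight function $w$: a vertex $\{\ell\}$ lies in $S_n$ exactly when $w_0(\ell)\le n$, while an edge $[\ell,\ell+1]$ lies in $S_n$ exactly when $\max\big(w_0(\ell),w_0(\ell+1)\big)\le n$, i.e. when both of its endpoints do. Consequently every connected component of $S_n$ is either an isolated integer point or a closed interval $[s,s+k]$ with integer endpoints, and every bounded component of the complement is an open interval $(s,s+k)$ with integer endpoints all of whose interior integer points have weight strictly above $n$. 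Setting this combinatorial dictionary up correctly is the only real bookkeeping in the argument; once it is in place both claims are immediate.

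For part (a) I would argue by contradiction. Suppose $[s,s+k]\cap S_{n-1}=\emptyset$ with $k>0$. Since $[s,s+k]\subseteq S_n$, every integer point in it has weight $\le n$, and since it avoids $S_{n-1}$, every such point has weight $>n-1$; hence $w_0(s)=w_0(s+1)=n$. But $s$ and $s+1$ are consecutive, so (\ref{eq:w_0valtozasa}) forces $\vert w_0(s+1)-w_0(s)\vert=1\ne 0$, a contradiction. This proves $[s,s+k]\cap S_{n-1}\ne\emptyset$. The parenthetical remark then follows by reading the $U$-action $\bH^0_{2n}(C,o)=H^0(S_n,\bZ)\to H^0(S_{n-1},\bZ)=\bH^0_{2n-2}(C,o)$ as the restriction map: the generator attached to the component $[s,s+k]$ is sent to the sum of the generators of those components of $S_{n-1}$ that it contains, which is nonzero exactly when $[s,s+k]\cap S_{n-1}\ne\emptyset$.

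For part (b) I would run the mirror-image argument on the complement. If $(s,s+k)$ is a bounded component of $\bR\setminus S_n$, then its interior integer points $s+1,\dots,s+k-1$ all lie outside $S_n$, so each has weight $\ge n+1$. The hypothesis $k>2$ guarantees that at least two consecutive such points, namely $s+1$ and $s+2$, lie in the open interval (indeed $s+2<s+k$). If both had weight exactly $n+1$ this would again violate (\ref{eq:w_0valtozasa}); hence one of them, say $\ell\in\{s+1,s+2\}\subseteq(s,s+k)$, satisfies $w_0(\ell)\ge n+2>n+1$, so $\ell\in\bR\setminus S_{n+1}$, as desired.

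I do not expect a genuine obstacle: the entire analytic input is the single-step relation (\ref{eq:w_0valtozasa}), and the rest is the combinatorial translation between the weights of lattice points and the (components of the) cubical sublevel sets $S_n$. The only points demanding a little care are verifying that the endpoints of the relevant intervals are integers carrying the stated weight inequalities, and confirming that $s+1$ and $s+2$ genuinely lie in the open interval $(s,s+k)$ when $k>2$.
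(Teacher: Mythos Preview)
Your proof is correct and follows essentially the same approach as the paper's: both arguments reduce entirely to the single-step relation (\ref{eq:w_0valtozasa}). The only cosmetic difference is that the paper argues directly---pinning down $w_0(s+k)=n$ from the boundary condition $s+k+1\notin S_n$ and then concluding $w_0(s+k-1)=n-1$---whereas you phrase it as a contradiction on two consecutive points having equal weight; the content is identical.
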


\begin{proof}
    We prove statement \textit{a)}.
    Since $s+k+1 \not\in S_n$, it follows from formula (\ref{eq:w_0valtozasa}), that $w_0(s+k+1)=n+1$, $w_0(s+k)=n$. Furthermore, $s+k-1 \in S_n$ and $w_0(s+k-1)=n-1$. Hence $s+k-1 \in [s, s+k]\cap S_{n-1}$. Statement \textit{b)} follows analogously.
\end{proof}

\begin{lemma} \label{lem:Snalakja}
    If $n \in \mathfrak{N}$, then $S_n$ has the form 
    $$S_n = \{s_n, s_n +2, \ldots,  s_n+2k_n-2\}\sqcup [s_n+2k_n,c-s_n-2{k_n}] \sqcup \{c-s_n-2{k_n}+2,\ldots,c-s_{n}\},$$
where $k_n = \lfloor {\rm rank}_\bZ\bH^0_{2n}/2\rfloor$ and $s_n \leq \delta$. Note, that the interval $[s_n+2k_n,c-s_n-2{k_n}]$ might  be just a single point or even empty. In the latter case $s_n+2k_n = c-s_n-2k_n+2$.
\end{lemma}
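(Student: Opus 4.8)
The plan is to read off the exact shape of $S_n$ from the \emph{number} of its connected components together with the Gorenstein symmetry, using $n\in\mathfrak{N}$ to guarantee that at most one component is a genuine interval. First I would record two standing facts. By the symmetry (\ref{eq:Gorsym}) the reflection $\ell\mapsto c-\ell$ preserves $w_0$, hence $S_n$ is invariant under it, so it suffices to describe $S_n\cap[0,\delta]$, the right half being its mirror image. Also, since $S_n$ is a disjoint union of contractible pieces (isolated points and closed intervals), one has $\mathrm{rank}_\bZ\bH^0_{2n}(C,o)=\mathrm{rank}_\bZ H^0(S_n)=\#\{\text{connected components of }S_n\}$, so everything reduces to identifying the components.

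The engine of the proof is the claim that \emph{for every $j\geq n$ the set $S_j$ has at most one connected component of positive length}. Indeed, by the last of the equivalent conditions in Definition \ref{def:initialpart}, $n\in\mathfrak{N}$ implies $j\in\mathfrak{N}$ for all $j\geq n$, so $(\overline{R})_{\geq j}$ has exactly one leaf and is therefore a ray; in particular it has at most one vertex at level $j$. By Lemma \ref{lem:interval}a), an interval component of $S_j$ of positive length meets $S_{j-1}$ and so is not a leaf of $R$, whence it survives into $\overline{R}$ and contributes a vertex of $\overline{R}$ at level $j$; conversely, single points are leaves and disappear. Thus positive-length components of $S_j$ biject with the vertices of $\overline{R}$ at level $j$, of which there is at most one. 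Applying the claim at $j=n$ produces a unique (possibly empty) interval $I\subseteq S_n$; since the reflection permutes components and must fix the unique interval, $I$ is symmetric, so $I=[a,c-a]$ with $a\leq\delta$ (or $I=\{\delta\}$, or $I=\emptyset$). Every other component of $S_n$ is then a single point, necessarily a local minimum with $w_0=n$ by (\ref{eq:w_0valtozasa}), occurring in mirror pairs about $\delta$.

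Next I would pin down the positions of the isolated points, showing they are spaced exactly two apart and abut $I$. Let $s_n$ be the least element of $S_n$; then $s_n\leq\delta$ and $w_0(s_n-1)=n+1$. Given two consecutive isolated points, or an isolated point immediately followed by the left endpoint of $I$, suppose $w_0$ reached $n+2$ or more between them. Then at level $n+1\ (\geq n)$ these points would lie in two distinct positive-length components of $S_{n+1}$ (each $[p-1,p+1]\subseteq S_{n+1}$, and the bump separates them), contradicting the claim at $j=n+1$. Hence $w_0$ follows the pattern $n,n+1,n$ between consecutive isolated points, so they are spaced exactly two apart, and the gap to $I$ is also exactly two. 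Writing $k_n$ for the number of left isolated points, they are $s_n,s_n+2,\dots,s_n+2k_n-2$, the interval begins at $s_n+2k_n$, and by symmetry $I=[s_n+2k_n,\,c-s_n-2k_n]$ with the mirror points on the right. Counting components ($2k_n$ points, plus $I$ unless it is empty) against $\mathrm{rank}_\bZ\bH^0_{2n}$ gives $k_n=\lfloor\mathrm{rank}_\bZ\bH^0_{2n}/2\rfloor$; the degenerate cases in which $I$ is a single point or empty match the stated formula through the relation $s_n+2k_n=c-s_n-2k_n+2$.

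The main obstacle is the engine claim: translating the algebraic condition $U\cdot\bM_{\geq 2n}\subseteq T^\infty_{2\min w_0}$ into the geometric statement that at most one component of $S_j$ is an interval. The care lies in correctly matching \emph{positive-length interval components} with \emph{non-leaf vertices of $R$} (via Lemma \ref{lem:interval}), hence with vertices of the trimmed root $\overline{R}$, and in exploiting that $\mathfrak{N}$ is upward closed so that the single-ray conclusion is available simultaneously at every level $j\geq n$. Once this correspondence is set up cleanly, the reflection symmetry and the bump-height argument are routine.
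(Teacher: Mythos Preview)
Your proof is correct and follows essentially the same strategy as the paper's: both establish that $S_n$ has at most one positive-length component (you via the trimmed-root characterization of $\mathfrak{N}$, the paper via the $U$-action directly---these are declared equivalent in Definition~\ref{def:initialpart}), then invoke Gorenstein symmetry to center that interval, and finally use $n+1\in\mathfrak{N}$ to force the isolated points to be spaced exactly two apart. The only cosmetic difference is that the paper packages the spacing step through Lemma~\ref{lem:interval}\,b) (gaps of length $>2$ in $S_n$ persist to $S_{n+1}$), whereas you argue the contrapositive directly by producing two positive-length components of $S_{n+1}$; the content is the same.
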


\begin{proof}
 Clearly, the space $S_n$ is a disjoint union of closed intervals. If $S_n$ contained more than one nontrivial interval, Lemma \ref{lem:interval} would imply that these intersected $S_{n-1}$ in multiple components.
    Thus, the triviality of the $U$-action on $\bM_{\geq 2n} \big/ T_{2\min w_0}^\infty$ implies that $S_n$ is the disjoint union of some points and at most a single interval, i.e. by Gorenstein symmetry (cf. (\ref{eq:Gorsym}))
    $$S_n = \{s^n_0,\dots,s^n_{k_n-1}\}\sqcup [s_{k_n}^n,c-s_{k_n}^n] \sqcup \{c-s^n_{k_n-1},\dots,c-s^n_{0}\},$$
where $k_n = \lfloor \text{rank}_\bZ\bH^0_{2n}/2 \rfloor$. Notice that for any $s \in S_n \cap \mathbb{Z}_{>0}$, by formula (\ref{eq:w_0valtozasa}) we have $[s-1, s+1] \subset S_{n+1}$.  Hence, item \textit{b)} of Lemma \ref{lem:interval} and the triviality of the $U$-action on $\bM_{\geq 2n+2} \big/ T_{2\min w_0}^\infty$ shows that
    $$|s_{i-1}^n-s_i^n| = 2 \text{ for all } i = 1, \dots, k_{n}.$$
Indeed, otherwise the space $S_{n+1}$ (with $n+1 \in \mathfrak{N}$) would contain more than one non-trivial interval. In summary,
    $$S_n \cap [0,\delta] = \{s^n_{0}, \,s^n_{0}+2,\, \dots,\, s^n_{0} + 2(k_n-1) \} \sqcup [s^n_{0}+2k_n, \delta],$$
    so we take $s_0^n$ as the desired $s_n$.
\end{proof}

Note that the number $\text{rank}_{\bZ}\bH^0_{2n}(C,o)$ can indeed be both even and odd, see Example \ref{ex:6,15,31} and the graded root shown before subsection \ref{subs:elate} for instance.

We can now prove the following characterization:

\begin{prop}\label{prop:ninNhaSn}
    $n \in \mathfrak{N}$ if and only if for all $n' \geq n$, the space $S_{n'}$ has the form  form $$S_{n'} = \{s_{n'}, s_{n'} +2, \ldots,  s_{n'}+2k_{n'}-2\}\sqcup [s_{n'}+2k_{n'},c-s_{n'}-2{k_{n'}}] \sqcup \{c-s_{n'}-2{k_{n'}}+2,\ldots,c-s_{n'}\},$$
where $k_{n'} = \lfloor {\rm rank}_\bZ\bH^0_{2n'}/2 \rfloor$ and $s_{n'} \leq \delta$. Similarly as above, the interval $[s_{n'}+2k_{n'},c-s_{n'}-2{k_{n'}}]$ may be a single point or empty. In the latter case $s_{n'}+2k_{n'} = c-s_{n'}-2k_{n'}+2$.
\end{prop}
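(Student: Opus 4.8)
The plan is to prove both implications through the dictionary between the direct sum decomposition of $\bH^0(C,o)$ from Proposition \ref{prop:directsumdecomp} and the connected components of the spaces $S_{n'}$. The forward implication will be immediate: by the third characterization of $\mathfrak{N}$ in Definition \ref{def:initialpart} the set $\mathfrak{N}$ is upward closed, so $n \in \mathfrak{N}$ forces $n' \in \mathfrak{N}$ for every $n' \geq n$, and Lemma \ref{lem:Snalakja} applied to each such $n'$ then yields exactly the asserted description of $S_{n'}$.

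For the converse I would argue by contraposition, extracting from the failure $n \notin \mathfrak{N}$ a single level $n' \geq n$ at which $S_{n'}$ violates the stated form. The bookkeeping device I would set up first is the identity
\[
\#\{\text{positive-length components of } S_{n'}\} \;=\; \operatorname{rank}\big(U : \bH^0_{2n'} \to \bH^0_{2n'-2}\big),
\]
which follows by combining equation (\ref{eq:locmin}) --- the point components of $S_{n'}$ are the local minima, and these are precisely a basis of $\ker(U : \bH^0_{2n'} \to \bH^0_{2n'-2})$ --- with the fact that the total number of components equals $\operatorname{rank}_\bZ \bH^0_{2n'}$. The positive-length components are then a basis of the image of $U$.

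Now assuming $n \notin \mathfrak{N}$, the first characterization in Definition \ref{def:initialpart} provides a summand $T_{2m_k}^{2n_k}$ of the quotient $\bM_{\geq 2n}/T^\infty_{2\min w_0}$ with $m_k < n_k$ (and automatically $n_k \geq n$). I would evaluate the rank above at $n' = n_k$ via the decomposition of Proposition \ref{prop:directsumdecomp}: a summand $T_{2m}^{2n}$ contributes $1$ to $\operatorname{rank}(U : \bH^0_{2n_k} \to \bH^0_{2n_k - 2})$ exactly when $m < n_k \leq n$. The distinguished summand satisfies $m_k < n_k \leq n_k$, contributing $1$, while $n_k > m_k \geq \min w_0$ guarantees that $T^\infty_{2\min w_0}$ contributes a further $1$; hence the rank is at least $2$. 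Thus $S_{n_k}$ has at least two positive-length components, which is incompatible with the stated form, whose only possible positive-length piece is the central interval. Since $n_k \geq n$, this is the desired violation.

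The only place where any genuine input beyond linear algebra is needed is the component-counting identity, and I expect it to be the main point to pin down carefully: one must ensure that every positive-length component of $S_{n'}$ survives into $S_{n'-1}$ and therefore lies outside $\ker U$, which is exactly Lemma \ref{lem:interval}(a), while the point components are the local minima counted by (\ref{eq:locmin}). Once this identity is established, both directions reduce to the upward-closedness of $\mathfrak{N}$, Lemma \ref{lem:Snalakja}, and an elementary rank computation in $\bH^0(C,o)$, with no further analysis of the cubical complexes $S_{n'}$ required.
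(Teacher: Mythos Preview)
Your proof is correct and essentially matches the paper's approach: the forward direction is identical (upward closedness of $\mathfrak{N}$ plus Lemma \ref{lem:Snalakja}), and for the converse both arguments rest on the same fact that the positive-length components of $S_{n'}$ account for the rank of $U$ at level $n'$ (via (\ref{eq:locmin}) and Lemma \ref{lem:interval}(a)). The paper argues this direction directly---the stated form forces $U \cdot \bH^0_{2n'} \subseteq T^\infty_{2\min w_0}$---while you argue by contraposition, using the direct-sum decomposition to locate a level $n_k$ where $\operatorname{rank} U \geq 2$; these are two phrasings of the same computation.
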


\begin{proof}
    The necessity of the condition is clear from Lemma \ref{lem:Snalakja}. The sufficiency comes from the fact that, through the correspondence (\ref{eq:locmin}) between local minimum points of the weight function and kernel of the $U$-action, $U \cdot \mathbb{H}^0_{2n'} \leq T_{2\min w_0}^{\infty}$ for all $n' \geq n$, hence $U \cdot \mathbb{M}_{\geq 2n} \leq T_{2\min w_0}^\infty$. 
\end{proof}

\begin{example} \label{ex:6,15,31} Consider the irreducible plane curve singularity $(C,o) \subset (\bC^2,o)$ given by the Puiseux parametrization $t \mapsto (t^6,t^{15}+t^{16})$. Its semigroup of values $\mathcal{S}_{C,o}$ is minimally generated by the set $\{6,15,31\}$ and has conductor $c=72$.

The diagram below depicts the semigroup $\mathcal{S}_{C,o}$, the space $S_n$ for $-15 \leq n \leq 2$ and the graded root $R(C,o)$. Clearly, $\mathfrak{N} = [-12,\infty)\cap \mathbb{Z}$ and indeed, $S_n = \{s_n\} \sqcup [s_n+2,c-s_n-2] \sqcup \{c-s_n\}$ for all $n\in \mathfrak{N}$ as claimed in Proposition \ref{prop:ninNhaSn}. \vspace{3mm}
    \begin{center}
        
\tiny

\tikzset{every picture/.style={line width=0.75pt}} 
\tiny

\tikzset{every picture/.style={line width=0.75pt}} 
\resizebox{12cm}{!}{ 

}
\vspace{1mm}

    \end{center}
\end{example}

\begin{corollary} \label{cor:nweightedS}
    Suppose that $n \in \mathfrak{N}$. According to Lemma \ref{lem:Snalakja}, the space $S_n$ has the form $$S_n = \{s_n, s_n +2, \ldots,  s_n+2k_n-2\}\sqcup [s_n+2k_n,c-s_n-2{k_n}] \sqcup \{c-s_n-2{k_n}+2,\ldots,c-s_{n}\},$$
    where $k_{n} = \lfloor {\rm rank}_\bZ\bH^0_{2n}/2 \rfloor$. Then we have the following statements.
    \begin{itemize}
        \item[a)] If $n>\min w_0$, then the elements of $\mathcal{S}_{C, o} \cap [0, \delta]$ of weight $n \in \mathfrak{N}$ are precisely the numbers $s_n + 2j$ with $0 \leq j <k_n$. (If $k_n=0$, then there are no semigroup elements of weight $n$.)
        \item[b)] If $n=e=\min w_0$, then the elements of $\mathcal{S}_{C, o} \cap [0, \delta]$ of weight $e \in \mathfrak{N}$ are precisely the numbers $s_e + 2j$,  where $0 \leq j < k_n$ if $ {\rm rank}_\bZ\bH^0_{2e}$ even, and $0 \leq j \leq k_n$ if $ {\rm rank}_\bZ\bH^0_{2e}$ odd 
    \end{itemize}
\end{corollary}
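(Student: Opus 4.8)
The plan is to translate the question about weight-$n$ semigroup elements into a question about the \emph{right endpoints} of the cubical complex $S_n$, and then to read these off directly from the explicit three-piece description of $S_n$ furnished by Lemma \ref{lem:Snalakja}. The bridge is formula (\ref{eq:w_0valtozasa}): for an integer $\ell \geq 0$ one has $\ell \in \mathcal{S}_{C,o}$ exactly when $w_0(\ell+1) = w_0(\ell)+1$. First I would record the resulting equivalence: an integer $\ell \geq 0$ satisfies $\ell \in \mathcal{S}_{C,o}$ and $w_0(\ell) = n$ if and only if $\ell \in S_n$ but $\ell+1 \notin S_n$. The forward direction is immediate; for the converse, $\ell \in S_n$ and $\ell+1 \notin S_n$ force $w_0(\ell) \leq n < w_0(\ell+1)$, and since consecutive weights differ by exactly $1$ this pins down $w_0(\ell)=n$ and $w_0(\ell+1)=w_0(\ell)+1$, i.e. $\ell \in \mathcal{S}_{C,o}$. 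Thus the weight-$n$ semigroup elements in $[0,\delta]$ are precisely the points $\ell \leq \delta$ lying in $S_n$ while $\ell+1$ does not (in particular each such $\ell$ is automatically a local minimum point in the sense of Definition \ref{def:locmin}).

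Next I would locate these right endpoints inside the three pieces of $S_n$. Each left isolated point $s_n + 2j$ with $0 \leq j \leq k_n-1$ is an isolated vertex of $S_n$, so $s_n+2j+1 \notin S_n$ and it is a right endpoint lying in $[0,\delta]$. The right isolated points, together with the right end $c - s_n - 2k_n$ of the middle interval, all lie strictly above $\delta$ as soon as the middle interval contains at least two points (by Gorenstein symmetry the middle interval is symmetric about $\delta$), so they never contribute to $[0,\delta]$. The only remaining candidate is $\delta$ itself, and $\delta$ is a right endpoint lying in $[0,\delta]$ precisely when the middle interval degenerates to the single point $\{\delta\}$, i.e. when $s_n + 2k_n = c - s_n - 2k_n = \delta$.

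The crux is therefore to decide when the middle interval is a single point, and this is exactly where the split into cases a) and b) originates. Since $w_0$ changes by exactly $\pm 1$ at each step, it cannot be constant on two consecutive lattice points; hence a middle interval with at least two points must attain a value strictly below $n$, whereas a single-point middle interval carries weight exactly $n$. In the latter situation every point of $S_n \cap [0,\delta]$ (the left isolated points and the point $\delta$) has weight exactly $n$, so $\min w_0 = n$ by the symmetry (\ref{eq:Gorsym}). This shows the degenerate case can occur only when $n = \min w_0$. Consequently, when $n > \min w_0$ the point $\delta$ is never a right endpoint in $[0,\delta]$ (the middle interval is empty or has $\delta$ in its interior), leaving exactly the $k_n$ left isolated points $s_n+2j$ with $0 \leq j < k_n$, which is statement a). When $n = e = \min w_0$, the middle interval is empty precisely when ${\rm rank}_\bZ\bH^0_{2e}$ is even, again giving the $k_e$ left isolated points, and equals the single point $\{\delta\}=\{s_e+2k_e\}$ precisely when the rank is odd, in which case $\delta$ contributes the extra element $s_e + 2k_e$, yielding $s_e+2j$ with $0 \leq j \leq k_e$; this is statement b). I expect the one genuinely delicate point to be the verification that a degenerate middle interval forces $n = \min w_0$ (through the impossibility of a constant weight stretch), while everything else is bookkeeping against Lemma \ref{lem:Snalakja}.
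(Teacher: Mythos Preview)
Your proposal is correct and follows essentially the same route as the paper: both identify the weight-$n$ semigroup elements in $[0,\delta]$ with the ``right endpoints'' $\ell\in S_n$, $\ell+1\notin S_n$ via formula~(\ref{eq:w_0valtozasa}), then read these off from the explicit decomposition of $S_n$ in Lemma~\ref{lem:Snalakja}, and finally argue that the degenerate middle interval (single point $\{\delta\}$ or empty) can only occur when $n=\min w_0$. The paper phrases this last step as ``$S_n$ contains no interval, hence $S_{n-1}=\emptyset$'' (implicitly via Lemma~\ref{lem:interval}\,a)), whereas you argue directly that a nondegenerate middle interval must contain a point of weight strictly below $n$; these are the same observation in different clothing.
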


\begin{proof}
    a) By (\ref{eq:w_0valtozasa}), $w_0(s_n + 2j)=n$ and $w_0(s_n + 2j+1)=n+1$ for all $0 \leq j < k_n$, hence $s_n + 2j \in \mathcal{S} \cap [0, \delta]$. On the other hand, for any $s \in \mathcal{S} \cap [0, \delta]$ we must have $s \in S_n, \ s+1 \notin S_n$, hence, they must be of the prescribed form. Indeed, $s_n + 2k_n=c-s_n-2k_n=\delta$ can only happen if $n = e=\min w_0$, since in this case $S_n$ does not contain any interval and, thus, $S_{n-1} = \emptyset$.

    b) If $n=e=\min w_0$, then by Lemma \ref{lem:interval} \textit{a)} $S_e$ consists of discrete points of weight $e$, thus, each a semigroup element. The ones in the interval $[0, \delta]$ are described above.
\end{proof}

\begin{remark} \label{rem:delta}
    Notice that the case of $ {\rm rank}_\bZ\bH^0_{2\min w_0}$ odd is equivalent to $w(\delta) = \min w_0$ (by Gorenstein symmetry (\ref{eq:Gorsym})), and, if $\min w_0=e \in \mathfrak{N}$, it is also characterized by $s_e+2k_e=\delta$.
\end{remark}

\begin{prop} \label{prop:kezdoszelet}
    For any $n \in \mathfrak{N}$, $\mathcal{E}_n$ is of the form $\mathcal{S}_{C, o} \cap [0, s_n +2k_n]$.
\end{prop}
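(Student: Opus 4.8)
The plan is to translate the weight condition defining $\mathcal{E}_n$ into a membership condition in the already-understood space $S_n$, and then read off the answer from the explicit shape of $S_n$ given by Lemma~\ref{lem:Snalakja}. First I would record that for an integer $s$ one has $w(s)=w_0(s)$, so that $\mathcal{E}_n=\{\, s\in\mathcal{S}_{C,o}\mid 0\le s\le \delta,\ w_0(s)\ge n\,\}$. The key observation is that, since $s\in\mathcal{S}_{C,o}$, formula~(\ref{eq:w_0valtozasa}) gives $w_0(s+1)=w_0(s)+1$; hence $w_0(s)\ge n$ is equivalent to $w_0(s+1)\ge n+1$, i.e. to $s+1\notin S_n$. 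Thus
$$ \mathcal{E}_n=\{\, s\in\mathcal{S}_{C,o}\mid 0\le s\le\delta \ \text{ and } \ s+1\notin S_n\,\}, $$
which reduces the whole statement to a combinatorial analysis of the complement of $S_n$.

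Next I would feed in the explicit description of $S_n$ for $n\in\mathfrak{N}$ from Lemma~\ref{lem:Snalakja} (equivalently Proposition~\ref{prop:ninNhaSn}): inside $[0,\delta]$ the space $S_n$ consists of the isolated points $s_n,s_n+2,\dots,s_n+2k_n-2$ together with the interval $[s_n+2k_n,\delta]$. Consequently the complement of $S_n$ within $[0,\delta]$ is exactly $[0,s_n)\sqcup\{\,s_n+1,s_n+3,\dots,s_n+2k_n-1\,\}$. Matching this against the condition $s+1\notin S_n$ shows that the semigroup elements $s\le\delta$ belonging to $\mathcal{E}_n$ are precisely those with $s<s_n$ together with those of the form $s_n+2j$ with $0\le j<k_n$. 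By Corollary~\ref{cor:nweightedS} the latter are exactly the semigroup elements of weight $n$ in $[0,\delta]$, while every $s<s_n$ automatically satisfies $w_0(s)>n$ since $[0,s_n)\cap S_n=\emptyset$. Hence $\mathcal{E}_n=\big(\mathcal{S}_{C,o}\cap[0,s_n)\big)\cup\{\,s_n,s_n+2,\dots,s_n+2k_n-2\,\}$.

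It then remains to identify this set with $\mathcal{S}_{C,o}\cap[0,s_n+2k_n]$. The content here is that the numbers lying strictly between the listed semigroup elements are gaps: the odd-offset points $s_n+2j-1$ are not in $\mathcal{S}_{C,o}$ (the weight increases there), and the left endpoint $s_n+2k_n$ of the central interval is also a gap whenever that interval is non-degenerate, because $w_0(s_n+2k_n)=n$ while $w_0(s_n+2k_n+1)=n-1$. Since moreover no semigroup element in $(s_n+2k_n,\delta]$ can have weight $\ge n$ (such an element would force its successor out of $S_n$, contradicting that the interval lies in $S_n$), nothing beyond $s_n+2k_n$ is added, and the two descriptions coincide.

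The delicate point — and the step I expect to be the main obstacle — is the degenerate case $n=e=\min w_0$, where $S_n$ consists only of isolated minima and the central interval of Lemma~\ref{lem:Snalakja} shrinks to a single point or becomes empty. Here one can no longer argue that the endpoint is a gap, and the behaviour at the centre of symmetry $\delta$ must be analysed directly according to the parity of ${\rm rank}_{\bZ}\bH^0_{2e}$. I would invoke Corollary~\ref{cor:nweightedS}(b) together with Remark~\ref{rem:delta}: when this rank is odd one has $s_e+2k_e=\delta\in\mathcal{S}_{C,o}$ with $w_0(\delta)=e$, so $\delta$ is correctly included; when it is even one has $w_0(\delta)>e$ and $s_e+2k_e=\delta+1$, so that the truncation at $\delta$ built into the definition of $\mathcal{E}_n$ must be respected when comparing with $\mathcal{S}_{C,o}\cap[0,s_e+2k_e]$. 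Verifying that, after accounting for this truncation, $\mathcal{E}_e$ is still the predicted initial segment of the semigroup is the one place where the symmetry~(\ref{eq:Gorsym}) of $w_0$ must be used carefully, and where the statement is most sensitive to the exact convention for the degenerate endpoint.
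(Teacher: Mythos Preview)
Your argument is correct. The paper's proof takes a more direct two-inclusion route that avoids your reformulation via $s+1\notin S_n$: for the inclusion $\mathcal{E}_n\subset\mathcal{S}_{C,o}\cap[0,s_n+2k_n]$ it simply cites Corollary~\ref{cor:nweightedS} together with the monotonicity of $s_{n'}+2k_{n'}$ in $n'$, and for the reverse inclusion it argues the contrapositive in one line --- if $s\in\mathcal{S}_{C,o}$ has $w_0(s)<n$ then $s\in S_n$ but cannot be one of the isolated points (those have weight exactly $n$), so by Lemma~\ref{lem:Snalakja} it lies in the central interval and in fact $s>s_n+2k_n$ since $w_0(s_n+2k_n)=n$. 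Your reformulation via the successor condition makes the role of the semigroup hypothesis more transparent and gives an explicit element-by-element description of $\mathcal{E}_n$, at the price of more bookkeeping (complement analysis, checking that the odd-offset points and the left endpoint of the interval are gaps). Both approaches rest on the same structural input, namely the shape of $S_n$ from Lemma~\ref{lem:Snalakja}. You are also right to flag the degenerate case $n=e=\min w_0$ with even rank as the sensitive one; the paper's two-sentence proof does not treat this boundary case separately, and the precise identification there does depend on the convention for $s_e+2k_e$ versus $\delta$ exactly as you describe.
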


\begin{proof}
    On one hand, since $s_{n-1} + 2k_{n-1} < s_n + 2k_n$ for all $n \in \mathfrak{N}$, Corollary \ref{cor:nweightedS} implies that $\mathcal{E}_n \subset \mathcal{S}_{C, o} \cap [0, s_n +2k_n]$. On the other hand, if $s \in \mathcal{S}_{C, o}$ has weight $w_0(s) < n$, then $s \in S_n$, even more, by Lemma \ref{lem:Snalakja}, $s \in [s_n + 2k_n, c-s_n-2k_n]$, hence $s > s_n +2k_n$, because $w_0(s_n+2k_n)=n$. 
\end{proof}

\begin{prop}\label{prop:emeghat}
    For any $n \in \mathfrak{N}$, with $n \leq 0$, the set $\mathcal{E}_n$ and the graded Abelian group $\bH^0_{\geq 2n}$ uniquely determine each other if $\min w$ is known. In particular, $\mathcal{E}_n$ is determined by the graded $\bZ[U]$-module $\bH^0$.
\end{prop}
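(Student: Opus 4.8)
The plan is to prove both implications by reducing everything to the reconstruction of the weight function $w_0$ on the initial segment $[0,s_n+2k_n]$, which by Proposition \ref{prop:kezdoszelet} is the same datum as $\mathcal{E}_n=\mathcal{S}_{C,o}\cap[0,s_n+2k_n]$ (one passes between the two by the step rule (\ref{eq:w_0valtozasa})). From $\bH^0_{\geq 2n}$ one reads off the ranks $r_k:=\mathrm{rank}_\bZ\bH^0_{2k}$ for all $k\geq n$, hence $k_k=\lfloor r_k/2\rfloor$; the whole range $[n,0]$ lies in $\mathfrak{N}$ since $0\in\mathfrak{N}$ by Proposition \ref{prop:0inN} and $\mathfrak{N}$ is upward closed. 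For $k\in\mathfrak{N}$ write $\lambda_k:=s_k+2k_k$ for the left endpoint of the central interval of $S_k$ furnished by Lemma \ref{lem:Snalakja}. Throughout I will use Corollary \ref{cor:w(s)leq0}, which guarantees that every semigroup element in $[0,c]$ has weight $\le 0$, so that $\mathcal{E}_n$ consists exactly of the semigroup elements of weight in $[n,0]$ lying in $[0,\delta]$.

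The substantial direction is that $(r_k)_{k\geq n}$ together with $\min w$ determine $\mathcal{E}_n$. The heart of the matter is the recursion
\[
   s_k \;=\; \lambda_{k+1}+1 \;=\; s_{k+1}+2k_{k+1}+1 \qquad (n\le k<0),
\]
with base case $s_0=0$ (level $0$ is first reached at $\ell=0$ because $w_0(0)=0$). To establish it I would argue that for $k+1\in\mathfrak{N}$ with $k\ge\min w_0$ the central interval $[\lambda_{k+1},c-\lambda_{k+1}]$ of $S_{k+1}$ is nontrivial, that $w_0(\lambda_{k+1})=k+1$ and hence $w_0(\lambda_{k+1}+1)=k$ by (\ref{eq:w_0valtozasa}), and---crucially---that no $\ell<\lambda_{k+1}$ satisfies $w_0(\ell)\le k$. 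The last point is where the simplicity of $S_{k+1}$ is used: such an $\ell$ would lie in $S_{k+1}$ and, since its neighbours take values $w_0(\ell)\pm1\le k+1$, could be neither an isolated vertex (those have weight exactly $k+1$) nor part of the unique central interval (which lies to its right), contradicting Lemma \ref{lem:Snalakja}. Thus the first hitting time of level $k$ is exactly $\lambda_{k+1}+1$. Solving the recursion expresses every $s_k$ ($n\le k\le 0$) in terms of the $k_k$'s, and Corollary \ref{cor:nweightedS} then assembles
\[
   \mathcal{E}_n \;=\; \bigcup_{k=n}^{0}\{\, s_k+2j \;:\; 0\le j<k_k \,\}
\]
(a disjoint union, the summands having distinct weights), with one caveat handled below.

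That caveat is exactly where $\min w$ enters. When $r_n$ is odd, Lemma \ref{lem:Snalakja} leaves two possibilities indistinguishable at the level of ranks alone: either $n>\min w_0$ and the central interval is present (case a) of Corollary \ref{cor:nweightedS}), or $n=\min w_0$ and $S_n$ is a set of isolated points with a central one sitting at $\delta=s_n+2k_n\in\mathcal{S}_{C,o}$ (case b), cf.\ Remark \ref{rem:delta}). These give genuinely different $\mathcal{E}_n$---the second contains the extra element $\delta$---and they are separated precisely by knowing whether $n=\min w_0$, i.e.\ by knowing $\min w$. With this distinction made, the union above (augmented by $\{\delta\}$ in the odd bottom case) yields $\mathcal{E}_n$. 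For the reverse implication, $\mathcal{E}_n$ determines $w_0$ on $[0,\lambda_n]$, Gorenstein symmetry (\ref{eq:Gorsym}) mirrors it onto $[c-\lambda_n,c]$, and the block $[\lambda_n,c-\lambda_n]$ lies inside every $S_k$ with $k\ge n$ as a single connected core; hence each such $S_k$ is this core together with the left isolated vertices visible in $w_0|_{[0,\lambda_n]}$ and their mirror images, so $r_k=2k_k+1$ (again with the bottom level's parity fixed by $\min w$) is computable. The ``in particular'' statement then follows, since the full graded $\bZ[U]$-module determines $\min w_0$ (the bottom of the canonical tower $T^\infty_{2\min w_0}$ of Corollary \ref{cor:canemb}), determines the set $\mathfrak{N}$, and contains $\bH^0_{\geq 2n}$.

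The main obstacle I anticipate is the key lemma $s_k=\lambda_{k+1}+1$: making rigorous that, in the simple regime $k+1\in\mathfrak{N}$, the path $w_0$ cannot dip to level $k$ anywhere before entering the central valley. Everything else is bookkeeping on top of Lemma \ref{lem:Snalakja} and Corollary \ref{cor:nweightedS}; the only other delicate point is the genuine rank-level ambiguity at the bottom weight, which is the structural reason the hypothesis ``$\min w$ is known'' cannot be dropped.
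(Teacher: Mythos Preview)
Your proposal is correct and follows essentially the same approach as the paper: both derive the recursion $s_{k-1}=s_k+2k_k+1$ (with $s_0=0$) from the structure of $S_k$ in Lemma \ref{lem:Snalakja}, assemble $\mathcal{E}_n$ via Corollary \ref{cor:nweightedS}, and resolve the bottom-level parity ambiguity using $\min w$. The only cosmetic difference is in the reverse direction, where the paper packages your direct component-count into a modified weight function $\tilde{w}$ (equal to $w$ outside the central block $[s_n+2k_n,c-s_n-2k_n]$ and constantly $n$ on it) whose lattice cohomology is $\bH^0/\bH^0_{<2n}$.
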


\begin{proof} First we show that $\bH^0_{\geq 2n}$ determines $\mathcal{E}_n$. From Corollaries \ref{cor:nweightedS} and \ref{cor:w(s)leq0} we get that 
\begin{equation*} \label{eq:calE_n}
    \mathcal{E}_n = \begin{cases}
        \{\, s_{n'}+2j \ | \ n \leq n' \leq 0, 0 \leq j < \ k_{n'} \, \} \cup\{s_n+2k_n\} & \text{ if }  n=\min w_0 \text{ and }  {\rm rank}_\bZ\bH^0_{2n} \text{ odd}; \\
        \{\, s_{n'}+2j \ | \ n \leq n' \leq 0, 0 \leq j < \ k_{n'} \, \} & \text{ otherwise}.
    \end{cases} 
\end{equation*}
(Equivalently, by Remark \ref{rem:delta}, $\mathcal{E}_n = \{\, s_{n'}+2j \ | \ n \leq n' \leq 0, 0 \leq j < \ k_{n'} \, \} \cup\big(\{s_n+2k_n\}\cap \{\delta\}\big)$.) 
Also, the numbers $s_n$ are determined recursively: $s_0 = 0$ and $s_{n-1} = s_n+2k_n+1=s_n + {\rm rank}_\mathbb{Z}\mathbb{H}^0_{2n}$ (Indeed, if $S_n$ contains a nontrivial interval $[s_n +2k_n, c-s_n-2k_n]$, then $s_n +2k_n +1 \in S_{n-1}$ and it is the smallest element. Otherwise, if $S_n$ is just a union of discrete points, then, by formula (\ref{eq:w_0valtozasa}) $S_{n-1} = \emptyset$ and thus $n=e=\min w_0$ with $n-1 \notin \mathfrak{N}$). We remark, that in the case of  $n=\min w_0$ with ${\rm rank}_\bZ\bH^0_{2n}$ odd, we get back item \textit{(viii)} of Proposition \ref{rem:prelim}.

Secondly, $\mathcal{E}_n$ determines $\mathbb{H}_{\geq 2n}$ as follows: since $\mathcal{E}_n = \mathcal{S}_{C, o} \cap [0,s_n+2k_n]$, we can define the modified weight function $\tilde{w}$ as
$$\tilde{w}(x):=\begin{cases}
    \ w(x) &\text{ if } x \in [0,s_n+2k_n] \sqcup [c-s_n-2k_n,\infty) \\
    \ \hspace{2.5mm}n &\text{ if } x \in [s_n+2k_n,c-s_n-2k_n]
\end{cases}.$$
The lattice cohomology of $\tilde{w}$ is isomorphic to $\bH^0 \big/ \bH^0_{< 2n}$. Forgetting the $U$-action yields $\bH^0_{\geq 2n}$.
\end{proof}

\begin{example}\cite[Example 1.6]{LCandmult}
    The local minimum points of the weight function $w_0$ are always elements of the semigroup $\mathcal{S}_{C, o}$. The essence of Proposition \ref{prop:emeghat} is the fact, that in $\mathcal{E}$ the natural ordering of the semigroup elements reflects the ordering of the local minimum values associated to them in the sense that $s_1 < s_2$ implies $w(s_1) \geq w(s_2)$. If this was true for \emph{any} two semigroup elements less than $\delta$, then reconstructing $\mathcal{S}_{C, o}$ from $R(C, o)$ would be straightforward.

    However, the image below provides an example showing that this `convexity condition' of local minimum points does not always hold. It depicts the semigroup $\mathcal{S}=\langle 11,14 \rangle$ with conductor $c=130$ and its associated weight function. One can see that $55 < 58 \in \mathcal{S}\cap[0,\delta)$ whereas $w(55) = -27 < w(58) = -26$.\vspace{2mm}
    \begin{center}

\tikzset{every picture/.style={line width=0.75pt}} 
\resizebox{12cm}{!}{ 
\tiny

}

    \end{center}
\end{example}

\noindent We can easily describe the algorithm how to read off $\mathcal{E}_n$, for any $n \in \mathfrak{N}$, from the lattice cohomology $\mathbb{Z}[U]$-module $\mathbb{H}^0(C, o)$ of a non-smooth irreducible plane curve singularity $(C, o) \subset (\mathbb{C}^2, o)$ or from the graded root $R(C, o)$:

\begin{enumerate}
    \item Start at level $n=0$ with $s_0=0$. Let $k_0:=\lfloor{\rm rank}_{\mathbb{Z}}\big(\mathbb{H}^0_0\big)/2\rfloor \ (\geq 1$ by the contraposition of \cite[Example 4.6.1]{AgNeCurve} and Gorenstein symmetry (\ref{eq:Gorsym})). If $0=e=\min w_0$, move to step (4), otherwise $\mathcal{E}_0=\{ s_0, s_0+2, \ldots, s_0+2k_0-2\}=\mathcal{S}_{C, o} \cap [0, s_0+2k_0]$.
    \item If $0 \neq e$, consider $\mathbb{H}^0_{-2}$. Set $s_{-1}:=s_{0}+2k_0+1=s_0+{\rm rank}_\mathbb{Z}\mathbb{H}^0_{0}$ and $k_{-1}:=\lfloor {\rm rank}_{\mathbb{Z}}\big( \mathbb{H}^0_{-2}\big)\rfloor$. If $-1=e=\min w_0$, move to step (4), otherwise $\mathcal{E}_{-1}=\mathcal{E}_0 \cup \{ s_{-1}, s_{-1}+2, \ldots, s_{-1}+2k_{-1}-2\}=\mathcal{S}_{C, o} \cap [0, s_{-1}+2k_{-1}]$. Notice, that $\{ s_{-1}, s_{-1}+2, \ldots, s_{-1}+2k_{-1}-2\}$ might be empty.
    \item If $-1 \neq e$, repeat step (2) with the indices decreased by $1$: swap $0 \mapsto -1$ and $-1 \mapsto -2$ to get $\mathcal{E}_{-2}$. Decrease the index, repeat step (2), descend, repeat etc., until arrived to $e$, where, if $e \neq \min w_0$, get $\mathcal{E}= \mathcal{E}_e = \mathcal{S}_{C,o} \cap [0, s_e+2k_e]$.
    \item For $e=\min w_0$ set $\mathcal{E}= \mathcal{E}_e = \mathcal{E}_{e+1} \cup \{s_e, s_e+2, \ldots, s_e+2\lceil{\rm rank}_{\mathbb{Z}}H^0_{2e}/2\rceil-2\} =\mathcal{S}_{C,o} \cap [0, \delta]$.
\end{enumerate}

\noindent In practice, one usually uses a slightly different, but equivalent, algorithm which we will now describe in terms of the graded root $R(C,o)$.

Let $n \in \mathfrak{N}$ and consider the truncated root $R_{\geq n}$. Delete all of its edges and for each $n' \geq n$, and delete half of the vertices (rounded up, except for the level $\chi = \min w_0$, where rounded down) of weight $n'$. Obtain thus a graded set $V_n$. Enumerate its vertices as $V_n=\{\,v_0,\, \dots,\,v_k\,\}$ in such a way, that $\chi(v_{i}) \geq \chi(v_{i+1})$ for all $0 \leq i <k$, and set
$$a_i := 2\cdot i-\chi(v_i).$$
The reader is invited to check that $\mathcal{E}_n=\{\,a_0,\,\dots,\,a_k\,\}$ is indeed the desired initial part of $\mathcal{S}_{C,o}$.

As an example consider the graded root $R$ depicted below. One can see that $\mathfrak{N} = [-8,\infty)\cap \mathbb{Z}$. The edgeless graph $V_{-8}$ is shown on the left with vertices enumerated as described above.\vspace{2mm}

\begin{center}

\tikzset{every picture/.style={line width=0.75pt}} 
\resizebox{9cm}{!}{ 
\tiny
\begin{tikzpicture}[x=0.75pt,y=0.75pt,yscale=-.6,xscale=.7]

\draw [color={rgb, 255:red, 225; green, 225; blue, 225 }  ,draw opacity=1 ][fill={rgb, 255:red, 225; green, 225; blue, 225 }  ,fill opacity=1 ]   (50,200.43) -- (570,200.43) ;
\draw [color={rgb, 255:red, 225; green, 225; blue, 225 }  ,draw opacity=1 ][fill={rgb, 255:red, 225; green, 225; blue, 225 }  ,fill opacity=1 ]   (50,320.43) -- (570,320.43) ;
\draw [color={rgb, 255:red, 225; green, 225; blue, 225 }  ,draw opacity=1 ][fill={rgb, 255:red, 225; green, 225; blue, 225 }  ,fill opacity=1 ]   (50,290.43) -- (570,290.43) ;
\draw [color={rgb, 255:red, 225; green, 225; blue, 225 }  ,draw opacity=1 ][fill={rgb, 255:red, 225; green, 225; blue, 225 }  ,fill opacity=1 ][line width=0.75]    (50,170.43) -- (570,170.43) ;
\draw [color={rgb, 255:red, 225; green, 225; blue, 225 }  ,draw opacity=1 ][fill={rgb, 255:red, 225; green, 225; blue, 225 }  ,fill opacity=1 ]   (50,230.43) -- (570,230.43) ;
\draw [color={rgb, 255:red, 225; green, 225; blue, 225 }  ,draw opacity=1 ][fill={rgb, 255:red, 225; green, 225; blue, 225 }  ,fill opacity=1 ]   (50,260.43) -- (570,260.43) ;
\draw [color={rgb, 255:red, 225; green, 225; blue, 225 }  ,draw opacity=1 ]   (50,50.43) -- (570,50.43) ;
\draw [color={rgb, 255:red, 225; green, 225; blue, 225 }  ,draw opacity=1 ][fill={rgb, 255:red, 225; green, 225; blue, 225 }  ,fill opacity=1 ][line width=0.75]    (50,80.43) -- (570,80.43) ;
\draw [color={rgb, 255:red, 225; green, 225; blue, 225 }  ,draw opacity=1 ][fill={rgb, 255:red, 225; green, 225; blue, 225 }  ,fill opacity=1 ][line width=0.75]    (50,110.43) -- (570,110.43) ;
\draw [color={rgb, 255:red, 225; green, 225; blue, 225 }  ,draw opacity=1 ][fill={rgb, 255:red, 225; green, 225; blue, 225 }  ,fill opacity=1 ][line width=0.75]    (50,140.43) -- (570,140.43) ;
\draw [color={rgb, 255:red, 225; green, 225; blue, 225 }  ,draw opacity=1 ]   (50,380.43) -- (570,380.43) ;
\draw [color={rgb, 255:red, 225; green, 225; blue, 225 }  ,draw opacity=1 ][fill={rgb, 255:red, 225; green, 225; blue, 225 }  ,fill opacity=1 ]   (50,350.43) -- (570,350.43) ;
\draw    (80,350) -- (190,320.43) ;
\draw [shift={(80,350)}, rotate = 344.96] [color={rgb, 255:red, 0; green, 0; blue, 0 }  ][fill={rgb, 255:red, 0; green, 0; blue, 0 }  ][line width=0.75]      (0, 0) circle [x radius= 3.35, y radius= 3.35]   ;
\draw    (110,350) -- (190,320.43) ;
\draw [shift={(110,350)}, rotate = 339.72] [color={rgb, 255:red, 0; green, 0; blue, 0 }  ][fill={rgb, 255:red, 0; green, 0; blue, 0 }  ][line width=0.75]      (0, 0) circle [x radius= 3.35, y radius= 3.35]   ;
\draw    (140,350) -- (190,320.43) ;
\draw [shift={(140,350)}, rotate = 329.4] [color={rgb, 255:red, 0; green, 0; blue, 0 }  ][fill={rgb, 255:red, 0; green, 0; blue, 0 }  ][line width=0.75]      (0, 0) circle [x radius= 3.35, y radius= 3.35]   ;
\draw    (170,350) -- (190,320.43) ;
\draw [shift={(170,350)}, rotate = 304.08] [color={rgb, 255:red, 0; green, 0; blue, 0 }  ][fill={rgb, 255:red, 0; green, 0; blue, 0 }  ][line width=0.75]      (0, 0) circle [x radius= 3.35, y radius= 3.35]   ;
\draw    (210,350) -- (190,320.43) ;
\draw [shift={(210,350)}, rotate = 235.92] [color={rgb, 255:red, 0; green, 0; blue, 0 }  ][fill={rgb, 255:red, 0; green, 0; blue, 0 }  ][line width=0.75]      (0, 0) circle [x radius= 3.35, y radius= 3.35]   ;
\draw    (240,350) -- (190,320.43) ;
\draw [shift={(240,350)}, rotate = 210.6] [color={rgb, 255:red, 0; green, 0; blue, 0 }  ][fill={rgb, 255:red, 0; green, 0; blue, 0 }  ][line width=0.75]      (0, 0) circle [x radius= 3.35, y radius= 3.35]   ;
\draw    (270,350) -- (190,320.43) ;
\draw [shift={(270,350)}, rotate = 200.28] [color={rgb, 255:red, 0; green, 0; blue, 0 }  ][fill={rgb, 255:red, 0; green, 0; blue, 0 }  ][line width=0.75]      (0, 0) circle [x radius= 3.35, y radius= 3.35]   ;
\draw    (300,350) -- (190,320.43) ;
\draw [shift={(300,350)}, rotate = 195.04] [color={rgb, 255:red, 0; green, 0; blue, 0 }  ][fill={rgb, 255:red, 0; green, 0; blue, 0 }  ][line width=0.75]      (0, 0) circle [x radius= 3.35, y radius= 3.35]   ;
\draw    (190,320.43) -- (190,290.43) ;
\draw [shift={(190,320.43)}, rotate = 270] [color={rgb, 255:red, 0; green, 0; blue, 0 }  ][fill={rgb, 255:red, 0; green, 0; blue, 0 }  ][line width=0.75]      (0, 0) circle [x radius= 3.35, y radius= 3.35]   ;
\draw    (190,290.43) -- (190,260.43) ;
\draw [shift={(190,290.43)}, rotate = 270] [color={rgb, 255:red, 0; green, 0; blue, 0 }  ][fill={rgb, 255:red, 0; green, 0; blue, 0 }  ][line width=0.75]      (0, 0) circle [x radius= 3.35, y radius= 3.35]   ;
\draw    (160,290) -- (190,260.43) ;
\draw [shift={(160,290)}, rotate = 315.42] [color={rgb, 255:red, 0; green, 0; blue, 0 }  ][fill={rgb, 255:red, 0; green, 0; blue, 0 }  ][line width=0.75]      (0, 0) circle [x radius= 3.35, y radius= 3.35]   ;
\draw    (130,290) -- (190,260.43) ;
\draw [shift={(130,290)}, rotate = 333.77] [color={rgb, 255:red, 0; green, 0; blue, 0 }  ][fill={rgb, 255:red, 0; green, 0; blue, 0 }  ][line width=0.75]      (0, 0) circle [x radius= 3.35, y radius= 3.35]   ;
\draw    (220,290) -- (190,260.43) ;
\draw [shift={(220,290)}, rotate = 224.58] [color={rgb, 255:red, 0; green, 0; blue, 0 }  ][fill={rgb, 255:red, 0; green, 0; blue, 0 }  ][line width=0.75]      (0, 0) circle [x radius= 3.35, y radius= 3.35]   ;
\draw    (250,290) -- (190,260.43) ;
\draw [shift={(250,290)}, rotate = 206.23] [color={rgb, 255:red, 0; green, 0; blue, 0 }  ][fill={rgb, 255:red, 0; green, 0; blue, 0 }  ][line width=0.75]      (0, 0) circle [x radius= 3.35, y radius= 3.35]   ;
\draw    (190,260.43) -- (190,230.43) ;
\draw [shift={(190,260.43)}, rotate = 270] [color={rgb, 255:red, 0; green, 0; blue, 0 }  ][fill={rgb, 255:red, 0; green, 0; blue, 0 }  ][line width=0.75]      (0, 0) circle [x radius= 3.35, y radius= 3.35]   ;
\draw    (160,230) -- (190,200.43) ;
\draw [shift={(160,230)}, rotate = 315.42] [color={rgb, 255:red, 0; green, 0; blue, 0 }  ][fill={rgb, 255:red, 0; green, 0; blue, 0 }  ][line width=0.75]      (0, 0) circle [x radius= 3.35, y radius= 3.35]   ;
\draw    (220,230) -- (190,200.43) ;
\draw [shift={(220,230)}, rotate = 224.58] [color={rgb, 255:red, 0; green, 0; blue, 0 }  ][fill={rgb, 255:red, 0; green, 0; blue, 0 }  ][line width=0.75]      (0, 0) circle [x radius= 3.35, y radius= 3.35]   ;
\draw    (190,230.43) -- (190,200.43) ;
\draw [shift={(190,230.43)}, rotate = 270] [color={rgb, 255:red, 0; green, 0; blue, 0 }  ][fill={rgb, 255:red, 0; green, 0; blue, 0 }  ][line width=0.75]      (0, 0) circle [x radius= 3.35, y radius= 3.35]   ;
\draw    (190,200.43) -- (190,170.43) ;
\draw [shift={(190,200.43)}, rotate = 270] [color={rgb, 255:red, 0; green, 0; blue, 0 }  ][fill={rgb, 255:red, 0; green, 0; blue, 0 }  ][line width=0.75]      (0, 0) circle [x radius= 3.35, y radius= 3.35]   ;
\draw    (190,170.43) -- (190,140.43) ;
\draw [shift={(190,170.43)}, rotate = 270] [color={rgb, 255:red, 0; green, 0; blue, 0 }  ][fill={rgb, 255:red, 0; green, 0; blue, 0 }  ][line width=0.75]      (0, 0) circle [x radius= 3.35, y radius= 3.35]   ;
\draw    (190,140.43) -- (190,110.43) ;
\draw [shift={(190,140.43)}, rotate = 270] [color={rgb, 255:red, 0; green, 0; blue, 0 }  ][fill={rgb, 255:red, 0; green, 0; blue, 0 }  ][line width=0.75]      (0, 0) circle [x radius= 3.35, y radius= 3.35]   ;
\draw    (160,110.43) -- (190,80.43) ;
\draw [shift={(160,110.43)}, rotate = 315] [color={rgb, 255:red, 0; green, 0; blue, 0 }  ][fill={rgb, 255:red, 0; green, 0; blue, 0 }  ][line width=0.75]      (0, 0) circle [x radius= 3.35, y radius= 3.35]   ;
\draw    (190,110.43) -- (190,80.43) ;
\draw [shift={(190,110.43)}, rotate = 270] [color={rgb, 255:red, 0; green, 0; blue, 0 }  ][fill={rgb, 255:red, 0; green, 0; blue, 0 }  ][line width=0.75]      (0, 0) circle [x radius= 3.35, y radius= 3.35]   ;
\draw    (220,110.43) -- (190,80.43) ;
\draw [shift={(220,110.43)}, rotate = 225] [color={rgb, 255:red, 0; green, 0; blue, 0 }  ][fill={rgb, 255:red, 0; green, 0; blue, 0 }  ][line width=0.75]      (0, 0) circle [x radius= 3.35, y radius= 3.35]   ;
\draw    (190,80.43) -- (190,50.43) ;
\draw [shift={(190,80.43)}, rotate = 270] [color={rgb, 255:red, 0; green, 0; blue, 0 }  ][fill={rgb, 255:red, 0; green, 0; blue, 0 }  ][line width=0.75]      (0, 0) circle [x radius= 3.35, y radius= 3.35]   ;
\draw    (190,50.43) -- (190,30.43) ;
\draw [shift={(190,50.43)}, rotate = 270] [color={rgb, 255:red, 0; green, 0; blue, 0 }  ][fill={rgb, 255:red, 0; green, 0; blue, 0 }  ][line width=0.75]      (0, 0) circle [x radius= 3.35, y radius= 3.35]   ;
\draw  [dash pattern={on 0.84pt off 2.51pt}]  (190,10) -- (190,30.43) ;
\draw    (440,110) ;
\draw [shift={(440,110)}, rotate = 0] [color={rgb, 255:red, 0; green, 0; blue, 0 }  ][fill={rgb, 255:red, 0; green, 0; blue, 0 }  ][line width=0.75]      (0, 0) circle [x radius= 3.35, y radius= 3.35]   ;
\draw    (440,230) ;
\draw [shift={(440,230)}, rotate = 0] [color={rgb, 255:red, 0; green, 0; blue, 0 }  ][fill={rgb, 255:red, 0; green, 0; blue, 0 }  ][line width=0.75]      (0, 0) circle [x radius= 3.35, y radius= 3.35]   ;
\draw    (470,290) ;
\draw [shift={(470,290)}, rotate = 0] [color={rgb, 255:red, 0; green, 0; blue, 0 }  ][fill={rgb, 255:red, 0; green, 0; blue, 0 }  ][line width=0.75]      (0, 0) circle [x radius= 3.35, y radius= 3.35]   ;
\draw    (440,290) ;
\draw [shift={(440,290)}, rotate = 0] [color={rgb, 255:red, 0; green, 0; blue, 0 }  ][fill={rgb, 255:red, 0; green, 0; blue, 0 }  ][line width=0.75]      (0, 0) circle [x radius= 3.35, y radius= 3.35]   ;
\draw    (500,349.57) ;
\draw [shift={(500,349.57)}, rotate = 0] [color={rgb, 255:red, 0; green, 0; blue, 0 }  ][fill={rgb, 255:red, 0; green, 0; blue, 0 }  ][line width=0.75]      (0, 0) circle [x radius= 3.35, y radius= 3.35]   ;
\draw    (440,350) ;
\draw [shift={(440,350)}, rotate = 0] [color={rgb, 255:red, 0; green, 0; blue, 0 }  ][fill={rgb, 255:red, 0; green, 0; blue, 0 }  ][line width=0.75]      (0, 0) circle [x radius= 3.35, y radius= 3.35]   ;
\draw    (530,350) ;
\draw [shift={(530,350)}, rotate = 0] [color={rgb, 255:red, 0; green, 0; blue, 0 }  ][fill={rgb, 255:red, 0; green, 0; blue, 0 }  ][line width=0.75]      (0, 0) circle [x radius= 3.35, y radius= 3.35]   ;
\draw    (470,350) -- (470,350.43) ;
\draw [shift={(470,350)}, rotate = 90] [color={rgb, 255:red, 0; green, 0; blue, 0 }  ][fill={rgb, 255:red, 0; green, 0; blue, 0 }  ][line width=0.75]      (0, 0) circle [x radius= 3.35, y radius= 3.35]   ;

\draw (27,42.4) node [anchor=north west][inner sep=0.75pt]  [color={rgb, 255:red, 225; green, 225; blue, 225 }  ,opacity=1 ]  {$\textcolor[rgb]{0.88,0.88,0.88}{2}$};
\draw (27,72.4) node [anchor=north west][inner sep=0.75pt]  [color={rgb, 255:red, 225; green, 225; blue, 225 }  ,opacity=1 ]  {$1$};
\draw (27,102.4) node [anchor=north west][inner sep=0.75pt]  [color={rgb, 255:red, 225; green, 225; blue, 225 }  ,opacity=1 ]  {$0$};
\draw (19,132.4) node [anchor=north west][inner sep=0.75pt]  [color={rgb, 255:red, 225; green, 225; blue, 225 }  ,opacity=1 ]  {$-1$};
\draw (19,162.4) node [anchor=north west][inner sep=0.75pt]  [color={rgb, 255:red, 225; green, 225; blue, 225 }  ,opacity=1 ]  {$-2$};
\draw (19,192.4) node [anchor=north west][inner sep=0.75pt]  [color={rgb, 255:red, 225; green, 225; blue, 225 }  ,opacity=1 ]  {$-3$};
\draw (19,222.4) node [anchor=north west][inner sep=0.75pt]  [color={rgb, 255:red, 225; green, 225; blue, 225 }  ,opacity=1 ]  {$-4$};
\draw (19,252.4) node [anchor=north west][inner sep=0.75pt]  [color={rgb, 255:red, 225; green, 225; blue, 225 }  ,opacity=1 ]  {$-5$};
\draw (19,282.4) node [anchor=north west][inner sep=0.75pt]  [color={rgb, 255:red, 225; green, 225; blue, 225 }  ,opacity=1 ]  {$-6$};
\draw (19,312.4) node [anchor=north west][inner sep=0.75pt]  [color={rgb, 255:red, 225; green, 225; blue, 225 }  ,opacity=1 ]  {$-7$};
\draw (19,342.4) node [anchor=north west][inner sep=0.75pt]  [color={rgb, 255:red, 225; green, 225; blue, 225 }  ,opacity=1 ]  {$-8$};
\draw (19,372.4) node [anchor=north west][inner sep=0.75pt]  [color={rgb, 255:red, 225; green, 225; blue, 225 }  ,opacity=1 ]  {$-9$};
\draw (431,82.4) node [anchor=north west][inner sep=0.75pt]    {$v_{0}$};
\draw (431,202.4) node [anchor=north west][inner sep=0.75pt]    {$v_{1}$};
\draw (461,262.4) node [anchor=north west][inner sep=0.75pt]    {$v_{2}$};
\draw (431,262.4) node [anchor=north west][inner sep=0.75pt]    {$v_{3}$};
\draw (461,322.4) node [anchor=north west][inner sep=0.75pt]    {$v_{4}$};
\draw (491,322.4) node [anchor=north west][inner sep=0.75pt]    {$v_{5}$};
\draw (431,322.4) node [anchor=north west][inner sep=0.75pt]    {$v_{6}$};
\draw (521,322.4) node [anchor=north west][inner sep=0.75pt]    {$v_{7}$};

\end{tikzpicture}
}

\end{center}\vspace{1mm}
\noindent One now reads off that $\mathcal{E}_{-8} = \{\,0,\,6,\,10,\,12,\,16,\,18,\,20,\,22 \,\}$. The graded root in the image is in fact the graded root associated to the numerical semigroup minimally generated by the set $\mathcal{S}=\{\,6,\,10,\,31\,\}$ and one checks that $\mathcal{E}_{-8} = \mathcal{S} \cap [0,23]$ indeed (here $23=\delta$).

\subsection{`$e$ comes late enough'}\label{subs:elate}\,

In this subsection we formalize the statement that $e=\min \mathfrak{N}$ is small enough in the sense, that $\mathcal{E}$ describes a large enough part of semigroup to be then able to recover it fully, just by knowing the $\delta$ invariant.

First we show, that in the special case of $l_{g-1}=2$ (see the notations in item \textit{(v)} of Proposition \ref{rem:prelim}), the initial part contains every semigroup element until $\delta$.

\begin{prop}\label{prop:lg-1=2}
    Let $(C, o) \in (\mathbb{C}^2, o)$ be an irreducible plane curve singularity with  $l_{g-1} = 2$. Then $\mathcal{E}= [0, \delta]\cap \mathcal{S}_{C, o}$ and $\bH^0_{\geq 2e}$ is closed under the $U$-action with $\bH^0_{\geq 2e} = \bH^0$ as graded $\bZ[U]$-modules.
\end{prop}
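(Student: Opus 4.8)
The plan is to reduce both assertions to the single claim that $e=\min w_0$. Indeed, if $e=\min w_0$ then $\bH^0_{2k}=0$ for every $k<e$, so $\bH^0_{\geq 2e}=\bH^0$ as graded groups and is trivially closed under $U$; moreover $w(s)\geq\min w_0=e$ for every integer $s$, so the condition ``$w(s)\geq e$'' in Definition \ref{def:initialpart} is vacuous and $\mathcal{E}=\mathcal{E}_e=\{\,s\in\mathcal{S}_{C,o}: s\leq\delta\,\}=\mathcal{S}_{C,o}\cap[0,\delta]$. By Proposition \ref{prop:ninNhaSn}, $e=\min w_0$ is equivalent to \emph{every} nonempty $S_n$ having the standard shape (symmetric isolated points of a single parity flanking one central interval), so the whole statement amounts to establishing that shape for all $n$.

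First I would record the arithmetic meaning of $l_{g-1}=2$: the generators $\beta_0,\dots,\beta_{g-1}$ are all even while $\beta_g$ is odd, and $n_g=l_{g-1}/l_g=2$. The parity remark yields the crucial fact that every odd element of $\mathcal{S}_{C,o}$ is $\geq\beta_g$, since an odd value can only be represented using $\beta_g$ an odd number of times. The second, and genuinely decisive, input is the inequality $\delta<\beta_g$. I would obtain it from the classical conductor formula $c=\sum_{i=1}^g(n_i-1)\beta_i-\beta_0+1$: as $n_g=2$ this reads $c=\beta_g+F$ with $F:=\sum_{i=1}^{g-1}(n_i-1)\beta_i-\beta_0+1$, and a short induction on the number of generators using (\ref{eq:nibetai<}) gives $F<n_{g-1}\beta_{g-1}<\beta_g$ (the inductive step being $F_s=(n_s-1)\beta_s+F_{s-1}<(n_s-1)\beta_s+\beta_s=n_s\beta_s$, together with $F_{s-1}<n_{s-1}\beta_{s-1}<\beta_s$). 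Hence $c<2\beta_g$ and $\delta=c/2<\beta_g$. I expect this conductor estimate to be the main obstacle, in the sense that it is the one place where the plane-curve growth hypotheses really enter; everything else is formal.

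Combining the two facts, any two consecutive integers $\ell,\ell+1\in\mathcal{S}_{C,o}$ contain an odd member, which is $\geq\beta_g>\delta$, forcing $\ell\geq\beta_g-1\geq\delta$. Thus there is no consecutive pair of semigroup elements with $\ell\leq\delta-2$, and by (\ref{eq:w_0valtozasa}) this is exactly the statement $w_0(\ell+2)\leq w_0(\ell)$ for all $0\leq\ell\leq\delta-2$; that is, $w_0$ is non-increasing in steps of two on $[0,\delta]$, equivalently each of its two parity-subsequences decreases toward $\delta$.

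Finally I would convert this ``$2$-step monotonicity'', together with Gorenstein symmetry (\ref{eq:Gorsym}), into a direct description of $S_n$. For a fixed $n$, monotonicity makes the even points and the odd points of $S_n\cap[0,\delta]$ each a suffix reaching up to $\delta$; writing $\alpha$ and $\omega$ for the smallest even, resp.\ odd, element of $S_n\cap[0,\delta]$, one checks by a short case distinction on $\alpha$ versus $\omega$ that $S_n\cap[0,\delta]$ is a run of isolated points of the parity with the smaller threshold, spaced by two, followed by a genuine interval terminating at $\delta$. Mirroring across $\delta$ by (\ref{eq:Gorsym}) then glues the two intervals at $\delta$ and produces precisely the shape of Proposition \ref{prop:ninNhaSn}, with the isolated points all of one parity. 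Hence every nonempty $S_n$ is simple, so $\min w_0\in\mathfrak{N}$ and $e=\min w_0$, which completes the proof.
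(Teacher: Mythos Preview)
Your proof is correct and follows essentially the same route as the paper: both arguments hinge on the inequality $\delta<\beta_g$ (you derive it from the classical conductor formula $c=\sum_i(n_i-1)\beta_i-\beta_0+1$, the paper from the equivalent partial-conductor recursion $c_i=c_{i-1}+(l_{i-1}-l_i)(\beta_i-l_i)/l_i$), then use the absence of consecutive semigroup elements in $[0,\delta]$ to obtain $2$-step monotonicity of $w_0$ and conclude $\min w_0\in\mathfrak{N}$. The only cosmetic difference is that the paper packages the passage from $2$-step monotonicity to the shape of $S_n$ into the preceding Lemma~\ref{lem:w(l)inN} (applied with $\ell=\delta$) and then bootstraps from $w(\delta)\in\mathfrak{N}$ down to $\min w_0$, whereas you verify the shape of every $S_n$ directly via Proposition~\ref{prop:ninNhaSn}; your version also handles $g=1$ uniformly rather than as a separate case.
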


First we need the following lemma.

\begin{lemma}\label{lem:w(l)inN}
    Let $\ell \in \mathbb{Z}_{\geq0}$ be a lattice point, such that there are no consecutive elements between $0$ and $\ell$ in some numerical semigroup $\mathcal{S}$ (i.e. for any $0\leq \ell' < \ell, \ \ell' \in \mathcal{S}$ we have $\ell'+1 \notin \mathcal{S}$) and $\max \big( w \big|_{[ \ell, \delta]}\big) = w(\ell)$, where $w$ is computed from $\mathcal{S}$ via {\rm (\ref{eq:wfromS})}. Then $w(\ell) \in \mathfrak{N}$ (hence $w(\ell) \geq e$).
\end{lemma}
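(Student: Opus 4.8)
The goal is to prove $n := w(\ell) = w_0(\ell)\in\mathfrak{N}$, which then gives $w(\ell)\ge e=\min\mathfrak{N}$. In view of the characterization of $\mathfrak{N}$ (Proposition \ref{prop:ninNhaSn} together with the $U$-action condition in Definition \ref{def:initialpart}), I would reduce the statement to the purely geometric claim that for every $n'\ge n$ the sublevel set $S_{n'}$ has at most one nontrivial interval, every other connected component being a single lattice point.

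First I would record what the maximality hypothesis buys. Since $\max(w|_{[\ell,\delta]}) = w(\ell) = n$, we have $w_0\le n$ on $[\ell,\delta]$, and Gorenstein symmetry (\ref{eq:Gorsym}) upgrades this to $w_0\le n$ on $[\delta,c-\ell]$, hence on all of $[\ell,c-\ell]$. Consequently, for every $n'\ge n$ the entire interval $[\ell,c-\ell]$, which contains the symmetry center $\delta$, lies inside a single connected component of $S_{n'}$.

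The heart of the argument is ruling out a second nontrivial interval. Suppose $[x,x+k]$ with $k\ge 1$ is a connected component of $S_{n'}$ different from the central one. Being connected and disjoint from $[\ell,c-\ell]$, it lies entirely in $[0,\ell)$ or, by the symmetry of $S_{n'}$, in $(c-\ell,c]$; assume the former, so $x+k\le \ell-1$. Since $[x,x+k]$ is a component, $w_0(x+k+1)>n'\ge w_0(x+k)$, so by (\ref{eq:w_0valtozasa}) we get $w_0(x+k)=n'$ and $x+k\in\mathcal{S}_{C,o}$; as $w_0(x+k-1)\le n'$ and consecutive weights differ by one, the only possibility is $w_0(x+k-1)=n'-1$, which forces $x+k-1\in\mathcal{S}_{C,o}$. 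Thus $x+k-1$ and $x+k$ are two consecutive semigroup elements both $<\ell$, contradicting the hypothesis that $[0,\ell)$ contains no consecutive semigroup elements. The same computation covers the endpoint case $x=0$, so no such nontrivial component exists.

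Hence, for every $n'\ge n$, the only nontrivial component of $S_{n'}$ is the central one and all remaining components are single points. Each such point is a local minimum of $w_0$, so its generator lies in $\ker U$ by (\ref{eq:locmin}); moreover the central component contains all of $S_{n'-1}$, since isolated points of $S_{n'}$ have weight exactly $n'$ and so do not belong to $S_{n'-1}$. Its $U$-image is therefore the sum of all components of $S_{n'-1}$, which is precisely the degree-$(n'-1)$ generator of the spine $T_{2\min w_0}^\infty$ as described in Corollary \ref{cor:canemb}. This yields $U\cdot\bM_{\ge 2n}\subseteq T_{2\min w_0}^\infty$, i.e. the defining condition $n\in\mathfrak{N}$. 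I expect the main subtlety to be exactly this final identification: one must resist identifying the degree-$m$ part of $T_{2\min w_0}^\infty$ with a single ``central'' component, since it is in fact the sum of \emph{all} components at that level (the image of $\bH^0_{>0}$ under $U^j$); once this is understood, the single-nontrivial-interval property translates cleanly into the triviality of the $U$-action on $\bM_{\ge 2n}\big/T_{2\min w_0}^\infty$.
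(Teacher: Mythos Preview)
Your argument is correct and follows essentially the same strategy as the paper: both use the ``no consecutive semigroup elements in $[0,\ell)$'' hypothesis to constrain the shape of $S_{n'}\cap[0,\ell)$ for $n'\ge w(\ell)$, and both then read off $n\in\mathfrak{N}$ from this. The paper phrases the key observation positively as the monotonicity $w(\ell')\ge w(\ell'+2)$ for $\ell'\le\ell-2$, deduces the step-by-two implication $\ell'\in S_{n'}\Rightarrow\ell'+2\in S_{n'}$, thereby obtains the full structure of Proposition~\ref{prop:ninNhaSn} (isolated points spaced exactly $2$ apart), and invokes that proposition; you instead argue by contradiction at the right endpoint of a hypothetical second nontrivial interval to get two consecutive semigroup elements, establish only the weaker ``at most one nontrivial interval'' statement, and then verify $U\cdot\bM_{\ge 2n}\subseteq T_{2\min w_0}^\infty$ directly. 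Your final identification of the $U$-image of the central component with the sum-of-all-components class (the degree-$(n'-1)$ part of the spine) is correct and is indeed the only point requiring care, as you note.
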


\begin{proof}
    The fact that there are no consecutive semigroup elements before $\ell$ implies, that for any $0 \leq \ell' \leq \ell-2$ we have $w(\ell') \geq w(\ell'+2)$ (cf. formula  (\ref{eq:wfromS})). Therefore, for any $n \geq w(\ell)$, the following are true for the space $S_n$: firstly, $[\ell, \delta] \subset S_n$, and secondly,
\begin{equation} \label{eq:linS->l+2inS}
\text{for any } 0 \leq \ell' \leq \ell: \text{ if } \ell' \in S_n, \text{ then also } \ell'+2 \in S_n. 
\end{equation}
Thus, if we denote by $s_n$ the smallest lattice point of $S_n$, then $s_n + 2 \mathbb{N} \subset S_n$ and if $s_n + 2k_n +1$ denotes the smallest element of $S_n \setminus \big( s_n + 2 \mathbb{N}\big)$, then $[s_n +2k_n, \delta] \subset S_n$. Therefore, for any $n \geq w(\ell)$, $S_n$ is of form prescribed by Proposition \ref{prop:ninNhaSn}, and therefore $n \in \mathfrak{N}$ for every $n \geq w(\ell)$. 
\end{proof}

\begin{proof}[Proof of Proposition \ref{prop:lg-1=2}.]
Firstly, if $g=1$ (i.e. we have a single Puiseux pair) and $m= l_{g-1}=2$, then we already saw in the proof of Proposition \ref{prop:0inN}, that $0=\min w_0 \in \mathfrak{N}$ and hence $\mathbb{H}_{\geq 0}=\mathbb{H}$. 

Let us now suppose, that $g >1$. We claim, that it is sufficient to prove that $\delta < \beta_g$ as this implies that $\mathcal{E}=[0,\delta]\cap\mathcal{S}_{C,o}$. 

    Indeed, by inequality (\ref{eq:nibetai<}) there can be no consecutive elements in $\mathcal{S}_{C, o}\cap[0,\beta_{g}-1]$, so, if $\delta < \beta_g$, then we can apply the previous Lemma \ref{lem:w(l)inN} to $\ell=\delta$ and we obtain that $w(\delta) \in \mathfrak{N}$. Note however, that $\min w_0$ is obtained by some lattice point in $[0, \delta]$, hence, by property (\ref{eq:linS->l+2inS}) $w(\delta) \leq \min w_0 +1$. Considering also the fact, that $U \cdot \mathbb{H}^0_{2\min w_0}=0$, we obtain that $\min w_0 \in \mathfrak{N}$, hence $\mathcal{E}=[0, \delta] \cap \mathcal{S}_{C,o}$ and $\mathbb{H}^0_{\geq 2e}=\mathbb{H}^0_{\geq 2\min w_0}=\mathbb{H}^0$.

    In order to prove the inequality $\delta < \beta_g$, we define the partial conductors
    $$c_i := \min\big\{\, n \in \bN \ \big| \ n+ \ell_i\bN \subseteq \langle \beta_0,\dots,\beta_i \rangle \,\big\}.$$
    Note that $c_0=0$, while $c_g=c$ is just the conductor of $\mathcal{S}_{C, o}$. It follows from \textit{(v)} of Proposition \ref{rem:prelim} that
    \begin{equation}\label{eq:ci}
        c_i = c_{i-1} + \frac{(l_{i-1}-l_i)(\beta_i-l_i)}{l_i},
    \end{equation}
    see e.g. \cite[Corollary 4.3]{bdr03}. Employing equation (\ref{eq:ci}) and inequality (\ref{eq:nibetai<}), one can show inductively that $\beta_{i+1} > c_i$ for all $0 \leq i \leq g-1$.
    Finally, using $c_{g-1}<\beta_g$, $l_{g-1}=2$ and equation (\ref{eq:ci}) once again, we get that
    \begin{equation} \label{eq:delta<beta_g}
        \delta = \frac{c_g}{2} =\frac{c_{g-1}+\beta_g-1}{2} < \beta_g
    \end{equation}
    as claimed.
\end{proof}

\begin{corollary}\label{cor:beta_g-1}
    Let us suppose that $\mathcal{S}_{C, o} \neq \langle 2, 3\rangle$ is a semigroup of an irreducible plane curve singularity with $l_{g-1}=2$. Then $\beta_{g-1} \in \mathcal{E}$. 
\end{corollary}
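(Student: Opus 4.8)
The plan is to reduce everything to a single numerical inequality. By Proposition \ref{prop:lg-1=2}, the hypothesis $l_{g-1}=2$ yields $\mathcal{E}=[0,\delta]\cap\mathcal{S}_{C,o}$; since $\beta_{g-1}$ is a minimal generator and hence lies in $\mathcal{S}_{C,o}$, it suffices to prove $\beta_{g-1}\leq\delta$. Moreover, the computation (\ref{eq:delta<beta_g}) from the proof of Proposition \ref{prop:lg-1=2} records the exact formula $2\delta=c_{g-1}+\beta_g-1$ (valid whenever $l_{g-1}=2$ and $l_g=1$), so the only thing left to check is that $c_{g-1}+\beta_g-1\geq 2\beta_{g-1}$.

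For the main case $g\geq 2$ I would invoke the structural fact that the gcd chain $l_0>l_1>\dots>l_g=1$ of a plane branch is strictly decreasing, so that $n_i=l_{i-1}/l_i\geq 2$ for all $1\leq i\leq g$ (cf. item \textit{(v)} of Proposition \ref{rem:prelim}). Applying the growth estimate (\ref{eq:nibetai<}) at $i=g-1$ then gives $\beta_g>n_{g-1}\beta_{g-1}\geq 2\beta_{g-1}$, hence $\beta_g-1\geq 2\beta_{g-1}$; combined with $c_{g-1}\geq 0$ this immediately yields $2\delta=c_{g-1}+\beta_g-1\geq 2\beta_{g-1}$, i.e. $\beta_{g-1}\leq\delta$.

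The degenerate case $g=1$ is precisely where the excluded semigroup appears. Here $l_{g-1}=l_0=\beta_0=2$, so $\mathcal{S}_{C,o}=\langle 2,\beta_1\rangle$ with $\beta_1$ odd, $c_{g-1}=c_0=0$, and $\beta_{g-1}=\beta_0=2$. The inequality to check becomes $\beta_1-1\geq 4$, i.e. $\beta_1\geq 5$. Since $\beta_1>\beta_0=2$ is odd, the only possibility with $\beta_1<5$ is $\beta_1=3$, that is $\mathcal{S}_{C,o}=\langle 2,3\rangle$; ruling this out forces $\beta_1\geq 5$ and hence $\beta_{g-1}=2\leq\delta$, so $\beta_{g-1}\in\mathcal{E}$.

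The only delicate input is the bound $n_{g-1}\geq 2$, on which the clean inequality $\beta_g>2\beta_{g-1}$ rests; this is a genuine feature of plane-branch semigroups (the strict decrease of the $l_i$) rather than of arbitrary numerical semigroups, and I would justify or cite it explicitly. The $g=1$ case has to be separated out exactly because $n_0$ is not defined and the growth estimate (\ref{eq:nibetai<}) is unavailable there — which is also the structural reason the single exceptional semigroup $\langle 2,3\rangle$ must be excluded.
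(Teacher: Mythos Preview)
Your proof is correct and follows essentially the same approach as the paper. Both reduce to showing $\beta_{g-1}\leq\delta$ via Proposition~\ref{prop:lg-1=2}, and both hinge on the key inequality $\beta_g>2\beta_{g-1}$ (from $n_{g-1}\geq 2$ together with (\ref{eq:nibetai<})); the only cosmetic difference is that for $g\geq 2$ the paper concludes via the intermediate step $\beta_g<c$ (arguing $\beta_g+2\notin\mathcal{S}_{C,o}$) to get $\beta_{g-1}<\beta_g/2<c/2=\delta$, whereas you go directly through the formula $2\delta=c_{g-1}+\beta_g-1$.
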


\begin{proof}
    If $g=1$, then $\mathcal{S}_{C, o}$ is of type $\langle 2, 2k+1\rangle$ for some $k>1$. In this case $\beta_0=2 \in \mathcal{E}$ trivially holds, where $\mathcal{E} = [0, k] \cap \mathcal{S}$ by Proposition \ref{prop:lg-1=2}.

    If $g \geq 2$, then clearly $2 \notin \mathcal{S}_{C, o}$ ($2 \in \mathcal{S}_{C, o}$ would imply $g=1$), hence, since $\beta_g$ is the smallest odd element of $\mathcal{S}_{C, o}$, $\beta_g + 2 \notin \mathcal{S}_{C, o}$ and, thus,  $\beta_g < c$. Now from inequality (\ref{eq:nibetai<})  we get that $\beta_{g-1} < \beta_g/2 < c/2 = \delta$. Therefore, $\beta_{g-1} \in [0, \delta]\cap \mathcal{S}_{C,o}$, which equals $\mathcal{E}$ by Proposition \ref{prop:lg-1=2}.
\end{proof}

Next, we show that this property is also true for any `large enough' semigroup as well.

\begin{prop}\label{prop:genericcase}
    If $g \geq 2$ (i.e. $\mathcal{S}_{C, o}$ is minimally generated by at least three elements), then $\beta_{g-1} \in \mathcal{E}$ (and, since $\mathcal{E}$ is and initial part, $\{ \beta_0, \ldots, \beta_{g-1}\} \subset \mathcal{E}$).
\end{prop}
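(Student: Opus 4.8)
The plan is to reduce the statement to a single inequality for the weight function, and then to prove that inequality using the sparsity of $\mathcal{S}_{C,o}$ below $\beta_g$. Since $\mathcal{E}=\mathcal{E}_e=\{\,s\in\mathcal{S}_{C,o}\mid w(s)\ge e,\ s\le\delta\,\}$ and $\beta_{g-1}\in\mathcal{S}_{C,o}$ with $w(\beta_{g-1})=w_0(\beta_{g-1})$, and since (\ref{eq:nibetai<}) gives $\beta_{g-1}<\beta_g/n_{g-1}\le\beta_g/2$ while the conductor recursion (\ref{eq:ci}) yields $c=c_{g-1}+(l_{g-1}-1)(\beta_g-1)>\beta_g$, so that $\beta_{g-1}<\beta_g/2<c/2=\delta$, the whole proposition comes down to the inequality $w_0(\beta_{g-1})\ge e$ (the addendum $\{\beta_0,\dots,\beta_{g-1}\}\subset\mathcal{E}$ then follows because $\mathcal{E}$ is an initial segment and $\beta_0\le\dots\le\beta_{g-1}$). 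Corollary \ref{cor:beta_g-1} already disposes of $l_{g-1}=2$, so I would assume $l_{g-1}\ge 3$. This assumption is what makes the argument run: every element of $\mathcal{S}_{C,o}\cap[0,\beta_g)$ lies in $\langle\beta_0,\dots,\beta_{g-1}\rangle$ and is divisible by $l_{g-1}\ge 3$, so $[0,\beta_g)$ contains no two consecutive semigroup elements and, by (\ref{eq:w_0valtozasa}), $w_0$ is non-increasing in steps of $2$ on this \emph{simple zone}.

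First I would recast $w_0(\beta_{g-1})\ge e$ as a search for a good input to Lemma \ref{lem:w(l)inN}. Indeed, if $\ell\le\min(\beta_g,\delta)$ (so that there are no consecutive semigroup elements below $\ell$) is \emph{right-maximal}, meaning $w(\ell)=\max\big(w|_{[\ell,\delta]}\big)$, then that lemma gives $w_0(\ell)\in\mathfrak{N}$ and hence $e\le w_0(\ell)$. Thus it suffices to produce a right-maximal lattice point $\ell$ inside the simple zone with $w_0(\ell)\le w_0(\beta_{g-1})$, for then $e\le w_0(\ell)\le w_0(\beta_{g-1})$. The windowing Lemma \ref{lem:ablakos} is the key simplification here: right-maximality of $\ell$ may be tested on the finite window $[\ell,\min(\ell+m-1,\delta)]$ instead of all the way to $\delta$, so the whole matter becomes local near the bottom of $w_0$.

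Next I would construct $\ell$. Let $\ell_0$ be the leftmost global minimum of $w_0$; the step-$2$ descent in the simple zone places $\ell_0<\beta_g$, and by Gorenstein symmetry (\ref{eq:Gorsym}) the quantity $e=\max\big(w|_{[\ell_0,\delta]}\big)$ is exactly the height of the barrier whose crossing reconnects the central interval of $S_n$. Because $w_0$ starts at $w_0(0)=0$, changes by $\pm1$, and reaches $\min w_0$ at $\ell_0$ inside the simple zone, a discrete intermediate value argument shows that $w_0$ realizes the value $e$ (note $\min w_0\le e\le 0$, as $0\in\mathfrak{N}$) at a descending gap $\ell$ of $[0,\beta_g)$; the windowing lemma then certifies this $\ell$ is right-maximal. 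It remains to verify $w_0(\ell)=e\le w_0(\beta_{g-1})$. Unwinding $e-\min w_0$ and $w_0(\beta_{g-1})-\min w_0$ via (\ref{eq:wfromS}), this is a comparison between the rise of $w_0$ over a length-$m$ window straddling $\beta_g$ and the drop of $w_0$ from $\beta_{g-1}$ down to the valley $\ell_0$; both are controlled by counting semigroup elements, and the density bound $1/l_{g-1}\le 1/3$ in the simple zone together with $\beta_{g-1}<\beta_g/2$ provides the necessary slack.

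The hard part will be exactly this final construction-plus-estimate. The naive barrier peak tends to sit just past the first pair of consecutive semigroup elements near $\beta_g$, where Lemma \ref{lem:w(l)inN} does not apply; one must therefore exploit the step-$2$ monotonicity to relocate a gap of the same height $e$ genuinely inside $[0,\beta_g)$, while controlling parity so that this gap really dominates $w_0$ on its forward window. The accompanying counting inequality $e\le w_0(\beta_{g-1})$ is the technical core, and it is here that the reduction to $l_{g-1}\ge 3$ and the bound $\beta_{g-1}<\beta_g/2$ are indispensable; everything else is bookkeeping with the lemmas already established.
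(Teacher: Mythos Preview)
Your overall plan---reduce to finding a right-maximal $\ell$ in the ``simple zone'' and invoke Lemma~\ref{lem:w(l)inN}---is exactly right, but you then make the problem far harder than it is. The paper does not hunt for an auxiliary point at height $e$ or attempt any barrier/valley comparison. It simply takes $\ell=\beta_{g-1}+2$. Since $l_{g-1}\ge 3$, neither $\beta_{g-1}+1$ nor $\beta_{g-1}+2$ lies in $\mathcal{S}_{C,o}$, so $w_0(\beta_{g-1}+2)=w_0(\beta_{g-1})$; and Lemma~\ref{lem:wbetag-1} (which is just one application of the windowing Lemma~\ref{lem:ablakos} to the window $[\beta_{g-1}+2,\beta_{g-1}+m+1]$, using that all semigroup elements there are divisible by $l_{g-1}$) shows that $\beta_{g-1}+2$ is right-maximal. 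Lemma~\ref{lem:w(l)inN} then gives $w_0(\beta_{g-1})=w_0(\beta_{g-1}+2)\in\mathfrak{N}$, i.e.\ $w_0(\beta_{g-1})\ge e$, directly. No characterization of $e$ as a barrier height, no intermediate-value construction, and no counting inequality are needed.

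Your route has two concrete gaps. First, the identity $e=\max\big(w|_{[\ell_0,\delta]}\big)$ is asserted without proof; nothing in the paper establishes this, and it is not obvious from the definition of $\mathfrak{N}$ (which is about the shape of \emph{all} $S_{n'}$ for $n'\ge n$, not just connectivity at one level). Second, you yourself flag the inequality $e\le w_0(\beta_{g-1})$ as ``the technical core'' and leave it to a vague density-slack argument; you would in effect be re-deriving, in a roundabout way, exactly what Lemma~\ref{lem:wbetag-1} gives for free. The fix is to abandon the global-minimum construction entirely and test right-maximality directly at $\beta_{g-1}+2$.
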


\noindent Before proving Proposition \ref{prop:genericcase} we will need the following lemma.

\begin{lemma}\label{lem:wbetag-1}
    If $g \geq 2$ and $l_{g-1} \neq 2$, then $w(\beta_{g-1}) = w(\beta_{g-1}+2)= \max\big( w\big|_{[\beta_{g-1}+2,\delta]}\big).$
\end{lemma}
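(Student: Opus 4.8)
The plan is to prove the two equalities separately, exploiting that near $\beta_{g-1}$ the weight function is governed entirely by the subsemigroup $\langle\beta_0,\dots,\beta_{g-1}\rangle$, all of whose elements are divisible by $l_{g-1}$. First I would record two elementary facts. Since $l_g=\gcd(\beta_0,\dots,\beta_g)=1$ we have $l_{g-1}=n_g$, and as $n_g\ge 2$ while $n_g\ne 2$ by hypothesis, in fact $l_{g-1}\ge 3$. Moreover any $s\in\mathcal{S}_{C,o}$ with $s<\beta_g$ must lie in $\langle\beta_0,\dots,\beta_{g-1}\rangle$ (a representation using $\beta_g$ would force $s\ge\beta_g$), hence $l_{g-1}\mid s$. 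For the first equality I would then argue that $\beta_{g-1}+1\notin\mathcal{S}_{C,o}$: by (\ref{eq:nibetai<}) we have $\beta_{g-1}+1<2\beta_{g-1}\le n_{g-1}\beta_{g-1}<\beta_g$, so were $\beta_{g-1}+1$ a semigroup element it would be divisible by $l_{g-1}$, contradicting $l_{g-1}\mid\beta_{g-1}$ and $l_{g-1}\ge 3$. Since $\beta_{g-1}\in\mathcal{S}_{C,o}$ but $\beta_{g-1}+1\notin\mathcal{S}_{C,o}$, formula (\ref{eq:w_0valtozasa}) gives $w(\beta_{g-1}+2)=w(\beta_{g-1}+1)-1=w(\beta_{g-1})$.

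For the second equality the key tool is the window Lemma \ref{lem:ablakos}, applied with $\ell=\beta_{g-1}+2$ and $m=\beta_0$, which reduces the maximum over $[\beta_{g-1}+2,\delta]$ to the maximum over the window $[\beta_{g-1}+2,\beta_{g-1}+\beta_0+1]$. Before invoking it I would verify its hypothesis $\beta_{g-1}+2\le\delta$. Using $\beta_g\ge 2\beta_{g-1}+1$ (from $n_{g-1}\ge 2$), specializing (\ref{eq:ci}) to $i=g$ with $l_g=1$, and $l_{g-1}\ge 3$ gives $c=c_{g-1}+(l_{g-1}-1)(\beta_g-1)\ge 2(\beta_g-1)\ge 4\beta_{g-1}$, whence $\delta=c/2\ge 2\beta_{g-1}\ge\beta_{g-1}+2$. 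The same inequalities show $\beta_{g-1}+\beta_0+1\le 2\beta_{g-1}<\beta_g$ (here $\beta_0<\beta_{g-1}$ since $g\ge 2$), so the entire window lies inside $[\beta_{g-1}+2,\beta_g)$.

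It then remains to show that $w$ attains its maximum over $[\beta_{g-1}+2,\beta_g)$ at the left endpoint $\beta_{g-1}+2$. I would list the semigroup elements in this range as $\beta_{g-1}=\sigma_0<\sigma_1<\cdots$; each is a multiple of $l_{g-1}$, so $\sigma_{j+1}-\sigma_j\ge l_{g-1}\ge 3$ and the points strictly between consecutive $\sigma_j$ are gaps. By (\ref{eq:w_0valtozasa}) this forces $w(\sigma_{j+1})=w(\sigma_j)+2-(\sigma_{j+1}-\sigma_j)\le w(\sigma_j)-1$, so both the values $w(\sigma_j)$ and the local peaks $w(\sigma_j+1)$ strictly decrease in $j$. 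Since $w(\beta_{g-1}+2)=w(\sigma_0)$ and $w(\sigma_1+1)\le w(\sigma_0)$, a short case check (the monotone descent on $[\sigma_0+2,\sigma_1]$, and the peak $w(\sigma_j+1)$ dominating each $[\sigma_j,\sigma_{j+1}]$ for $j\ge 1$) yields $w(\ell)\le w(\beta_{g-1}+2)$ for all $\ell\in[\beta_{g-1}+2,\beta_g)$. Restricting to the window and combining with Lemma \ref{lem:ablakos} then gives $\max\big(w|_{[\beta_{g-1}+2,\delta]}\big)=w(\beta_{g-1}+2)$, which together with the first paragraph completes the proof.

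The main obstacle, and the reason the argument is arranged this way, is that the behaviour of $w$ beyond $\beta_g$ is genuinely different: there the semigroup elements are no longer multiples of $l_{g-1}$ and $w$ may rise again, so no direct estimate over all of $[\beta_{g-1}+2,\delta]$ is available. The essential trick is that Lemma \ref{lem:ablakos} confines the relevant maximum to a window of length $m=\beta_0$ that provably stays below $\beta_g$, where the clean arithmetic of the subsemigroup applies. Finally, note that the hypothesis $l_{g-1}\ne 2$ enters twice and crucially: it both makes $\beta_{g-1}+1$ a gap (first equality) and produces the strict net decrease $2-l_{g-1}<0$ between consecutive semigroup elements (third paragraph).
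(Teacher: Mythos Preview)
Your proof is correct and follows essentially the same approach as the paper's: both establish $w(\beta_{g-1})=w(\beta_{g-1}+2)$ from the divisibility by $l_{g-1}$ of all semigroup elements below $\beta_g$, then bound $w$ on the window $[\beta_{g-1}+2,\beta_{g-1}+m+1]$ by $w(\beta_{g-1})$, and finally invoke Lemma~\ref{lem:ablakos}. You are more explicit than the paper in verifying the hypothesis $\beta_{g-1}+2\le\delta$ and in justifying the window bound via the sequence $\sigma_j$, but the structure and key ideas are the same.
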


\begin{proof}

    By inequality (\ref{eq:nibetai<})  $\beta_g > 2\beta_{g-1}>\beta_{g-1}+m$, i.e. $\beta_g \geq \beta_{g-1} + m + 2$. It follows that $[\beta_{g-1}+2,\beta_{g-1}+m+1]\cap \mathcal{S}_{C, o} \subseteq l_{g-1}\bZ$. Since $l_{g-1} \neq 2$, $\beta_{g-1}+2\not\in \mathcal{S}_{C, o}$, with $w(\beta_{g-1}+2) = w(\beta_{g-1})$ and also $w\big|_{[\beta_{g-1}+2,\beta_{g-1}+m+1]} \leq w(\beta_{g-1})$. Finally, applying Lemma \ref{lem:ablakos} to $[\beta_{g-1}+2,\beta_{g-1}+m+1]$ finishes the proof.
\end{proof}

\begin{proof}[Proof of Proposition \ref{prop:genericcase}.]
The case $l_{g-1}=2$ was already discussed in Corollary \ref{cor:beta_g-1}. 

Let us then consider the case of $l_{g-1}\neq 2$. Notice, that there can be no consecutive elements in $\mathcal{S}_{C, o}\cap[0,\beta_{g-1}+2]$ by inequality (\ref{eq:nibetai<}). This, together with Lemma \ref{lem:wbetag-1} and \ref{lem:w(l)inN} used for $\ell = \beta_{g-1}+2$  implies that    
    $w(\beta_{g-1}) \in \mathfrak{N}$. It follows that $\beta_{g-1} \in \mathcal{E}$ by definition.
\end{proof}

Via contraposition, Proposition \ref{prop:genericcase} implies, that $\mathcal{E}$ can only be minimal (i.e. $\mathcal{E}=\{ 0\}$, see Proposition \ref{prop:0inN}), if the semigroup $\mathcal{S}_{C, o}$ is small in the sense that it has at most two generators:

\begin{corollary}\label{cor:E=0}
    If $\mathcal{E}=\{ 0\}$, then $g\leq 1$, i.e. $\mathcal{S}_{C, o}=\langle\beta_0, \beta_1\rangle$ with $\beta_0, \beta_1 \in \mathbb{N}_{\geq 1}, \ {\rm gcd}(\beta_0, \beta_1)=1$.
\end{corollary}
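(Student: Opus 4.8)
The plan is to prove this purely by contraposition, since the statement is nothing more than the logical negation of Proposition \ref{prop:genericcase} combined with a triviality about the size of the generators. First I would assume $g \geq 2$ and aim to show $\mathcal{E} \neq \{0\}$. Proposition \ref{prop:genericcase} applies verbatim under the hypothesis $g \geq 2$ and already yields $\beta_{g-1} \in \mathcal{E}$ (indeed $\{\beta_0,\dots,\beta_{g-1}\}\subset \mathcal{E}$), so no further analysis of the weight function or of the set $\mathfrak{N}$ is needed here.

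The only point left to verify is that $\beta_{g-1}$ is genuinely a nonzero element, so that $\mathcal{E}$ strictly contains $\{0\}$. This is immediate from the structure of the minimal generating set in item \textit{(v)} of Proposition \ref{rem:prelim}: we have the strict chain $0 < \beta_0 < \beta_1 < \dots < \beta_g$, with $\beta_0 = m(C,o)$. For $g \geq 2$ this gives $\beta_{g-1} \geq \beta_1 > \beta_0 \geq 1$, hence $\beta_{g-1} \geq 1 > 0$ and therefore $\mathcal{E} \supsetneq \{0\}$. Taking the contrapositive, $\mathcal{E} = \{0\}$ forces $g \leq 1$.

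To finish, I would unwind what $g \leq 1$ means for the semigroup. Since $\mathcal{S}_{C,o}$ is minimally generated by $\beta_0,\dots,\beta_g$, the case $g = 1$ gives exactly $\mathcal{S}_{C,o} = \langle \beta_0,\beta_1\rangle$, and the coprimality $\gcd(\beta_0,\beta_1) = 1$ follows from item \textit{(ii)} of Proposition \ref{rem:prelim}, which asserts that $\mathcal{S}_{C,o}$ is a numerical semigroup (so its generators must have trivial $\gcd$); the degenerate case $g = 0$ is the smooth germ $\mathcal{S}_{C,o} = \langle 1\rangle$, which is subsumed under the stated form. I do not expect any obstacle in this argument: all of the substantive work — locating $\beta_{g-1}$ inside the initial part via the `local minima' machinery and Lemmas \ref{lem:ablakos}, \ref{lem:w(l)inN} and \ref{lem:wbetag-1} — has already been carried out in Proposition \ref{prop:genericcase}, and this corollary merely records its contrapositive.
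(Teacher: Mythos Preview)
Your proposal is correct and takes essentially the same approach as the paper: the paper also derives this corollary directly by contraposition from Proposition \ref{prop:genericcase}, noting that $\beta_{g-1}\in\mathcal{E}$ forces $\mathcal{E}\supsetneq\{0\}$ whenever $g\geq 2$. Your write-up simply spells out the details (nonvanishing of $\beta_{g-1}$, the meaning of $g\leq 1$) that the paper leaves implicit.
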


We are now ready to put together the pieces of the proof of the main theorem.

\subsection{The algorithm}\label{subs:alg}\,

We will describe here the algorithm how to reconstruct the semigroup $\mathcal{S}_{C, o}$, more specifically its minimal set of generators $\{ \beta_0, \ldots, \beta_g\}$, of an irreducible plane curve singularity $(C, o) \subset (\mathbb{C}^2, o)$ from the $\mathbb{Z}[U]$-module $\mathbb{H}^0(C, o)$.

\begin{enumerate}
    \item First check whether $\mathbb{H}^0$ is trivial or not: if $\mathbb{H}^0 \cong T_{0}^{\infty}$ (or equivalently ${\rm rank}_\mathbb{Z}\mathbb{H}^0_{2n}=1$ for $n \geq 0$ and $0$ otherwise), then $(C, o)$ is smooth.
    \item If $\mathbb{H}^0$ is nontrivial, find $e=\min \mathfrak{N}$, the smallest $n \geq \min w_0$ such that the $U$-action on $\mathbb{M}_{\geq 2n} / T_{2 \min w_0}^\infty $ is trivial (for example, by obtaining the canonical direct sum decomposition of $\mathbb{H}^0$ of Proposition \ref{prop:directsumdecomp} with the method described in \cite[Remark 2.7]{LCandmult} and then reading off $e$ as in (\ref{eq:eadirsumdecompbol})).
    \item Then reconstruct the set $\mathcal{E}=[0, s_e +2k_e] \cap \mathcal{S}_{C, o}
    $ from the $\mathbb{Z}[U]$-module $\mathbb{H}^0$ by the algorithm proceeding Proposition \ref{prop:emeghat}.
    \item If $\mathcal{E}$ is minimal, i.e. $\mathcal{E} =\{0\}$, then $g=1$, or equivalently, the curve has a single Puiseux pair. Thus $\mathcal{S}_{C,o}=\langle \beta_0, \beta_1\rangle$, with $\beta_0=m$ the multiplicity and, hence, $\beta_0=2$ if $\mathbb{H}^0_{<0}=0$, and $\beta_0=2-\max\{n\,|\,  {\rm ker} (U:\bH^0_{2n}\to \bH^0_{2n-2}) \not =0,\ n<0\}$ otherwise. Now $\delta = (\beta_0-1)(\beta_1-1)/2= {\rm rank}_{\bZ}\bH^0_{\leq 0}(C,o)-1$ and from these compute $\beta_1$ as well.
    \item If $\mathcal{E}$ is not minimal, then $\{ \beta_0, \ldots, \beta_{g-1}\} \subset \mathcal{E}$ and they can be read off as $\mathcal{E}^* \setminus (\mathcal{E}^* + \mathcal{E}^*)$ (where $\mathcal{E}^* = \mathcal{E}\setminus \{0\}$), i.e. the primitive elements of $\mathcal{E}$. Also
    \begin{equation*}
        {\rm rank}_{\bZ}\bH^0_{\leq 0}(C,o)-1 = \delta = c/2 = c_{g-1} + {(l_{g-1}-1)(\beta_g-1)},
    \end{equation*}
    where the partial conductor $c_{g-1}$ can be computed inductively from the set $\{ \beta_0, \ldots, \beta_{g-1}\}$ via the formula (\ref{eq:ci}) and $l_{g-1}={\rm gcd}(\beta_0, \ldots, \beta_{g-1})$. From these compute $\beta_g$ as well.
\end{enumerate}

\begin{proof}[Proof of Theorem \ref{thm:MAIN} / previous algorithm]
For the first step we use \cite[Example 4.6.1]{AgNeCurve} (see also part \textit{(vii)} of Proposition \ref{rem:prelim}). The second step is straightforward, while the third is just the application of Propositions \ref{prop:kezdoszelet} and \ref{prop:emeghat}.

The main assertion of step (4) regarding the number of Puiseux pairs follows from Corollary \ref{cor:E=0}. Then the multiplicity and delta invariant are read off as described in parts \textit{(vii)} and \textit{(viii)} of Proposition \ref{rem:prelim}.

In order to prove step (5) we have to consider two different cases: 
\begin{itemize}
    \item if $g=1$, then $\mathcal{E} \supsetneq \{0\}$ implies that $\beta_0 \in \mathcal{E}$, since $\mathcal{E}$ is an initial part of $\mathcal{S}_{C, o}$ (see Proposition \ref{prop:kezdoszelet});
    \item if $g\geq 2$, then $\{ \beta_0, \ldots, \beta_{g-1}\} \subset \mathcal{E}$ by Proposition \ref{prop:genericcase}.
\end{itemize}
($g=0$ would mean that $(C, o)$ is smooth, which case was dealt with in step (1)). Finally, the $\delta$ invariant is read off again as described in part \textit{(viii)} of Proposition \ref{rem:prelim}.
\end{proof}

\subsection{$l_{g-1}=2$ revisited}\,

We already saw in Proposition \ref{prop:lg-1=2}, that the condition $l_{g-1}=2$ is special. Here we prove that we can, in fact, characterize it with the $\mathbb{Z}[U]$-module $\mathbb{H}^0(C, o)$ as well.

\begin{prop}
    The following conditions are equivalent for a non-smooth irreducible plane curve singularity $(C, o) \subset (\mathbb{C}^2, o)$:
\begin{enumerate}
    \item $l_{g-1}=2$;
    \item $\min w_0$ even and ${\rm rank}_{\mathbb{Z}}\big( \mathbb{H}^0_{4k+2}\big)=1$ for all $k \geq ({\min w_0})/{2}$. This clearly implies that $U \cdot \mathbb{H}^0 \leq T_{2 \min w_0}^{\infty}$ and that, if $\alpha \in \mathbb{H}^0$ is homogeneous and $U \cdot \alpha=0$, then $\alpha$ has even degree (i.e. the homogeneous parts of the kernel of the $U$-action are of even degree).
     \item In the graded root $R(C,o)$ the gradings of the local minimum points are all even and the trimmed version $\overline{R}$ only has a single leaf. 
\end{enumerate}
\end{prop}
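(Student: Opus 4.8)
The plan is to prove the cycle $(1)\Rightarrow(3)\Rightarrow(2)\Rightarrow(1)$, using as a pivot the purely semigroup-theoretic condition
$$(\ast)\qquad \mathcal{S}_{C,o}\cap[1,\delta]\ \text{contains no odd number},$$
which I claim is equivalent to $l_{g-1}=2$. Everything rests on the elementary parity identity $w_0(\ell)=2\mathfrak{h}(\ell)-\ell\equiv\ell\pmod 2$. In particular a local minimum point $\ell$ has even weight if and only if $\ell$ is even, so ``the gradings of the local minima are all even'' in (3) means exactly ``all local minimum points are even integers'', and $\min w_0$ is even iff it is attained at an even lattice point. Note also that $c=2\delta$ is even, so the Gorenstein reflection $\ell\mapsto c-\ell$ of \eqref{eq:Gorsym} preserves parity.

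First the implications out of $l_{g-1}=2$. By Proposition \ref{prop:lg-1=2} we have $\delta<\beta_g$, hence $\mathcal{S}_{C,o}\cap[0,\delta]\subseteq\langle\beta_0,\dots,\beta_{g-1}\rangle$, a set of even numbers; this is precisely $(\ast)$. It follows that every local minimum in $[0,\delta]$ is even, and by parity-preserving symmetry \emph{all} local minima are even and $\min w_0$ is even. Proposition \ref{prop:lg-1=2} further gives $e=\min w_0$, i.e.\ $\overline R$ has a single leaf, which completes (3). For (2) recall that by \eqref{eq:w_0valtozasa} the isolated points of $S_n$ are exactly the local minima of value $n$, while the endpoints of the unique interval of Lemma \ref{lem:Snalakja} are gaps of weight $n$; at an odd level $n>\min w_0$ an isolated point would be an odd local minimum, forbidden by $(\ast)$, so $S_n$ is a single interval and $\mathrm{rank}_\bZ\bH^0_{2n}=1$. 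The implication $(3)\Rightarrow(2)$ is the same computation in reverse: if all local minima are even and $e=\min w_0$, then for odd $n>\min w_0$ the level $n$ lies in $\mathfrak{N}$, $S_n$ has no isolated points (these would be odd local minima), and is therefore connected.

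The substance is $(2)\Rightarrow(1)$, for which it suffices to prove $(2)\Rightarrow(\ast)$ and $(\ast)\Leftrightarrow l_{g-1}=2$. The implication $l_{g-1}\neq2\Rightarrow\neg(\ast)$ I would prove by exhibiting an odd element $\le\delta$. If $l_{g-1}$ is odd then some generator $\beta_j$ with $j\le g-1$ is odd, and since $l_{j-1}\ge l_{g-1}\ge3$ one checks $\beta_j-1\notin\langle\beta_0,\dots,\beta_{j-1}\rangle$, so $\beta_j$ is in fact an odd local minimum, with $\beta_j\le\beta_{g-1}\le\delta$ (Proposition \ref{prop:genericcase}, and $\beta_0=m\le\delta$ directly when $g=1$). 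If $l_{g-1}$ is even and $\ge4$ then $\langle\beta_0,\dots,\beta_{g-1}\rangle$ consists of even numbers, $\beta_g$ is odd, and \eqref{eq:ci} gives $\delta=\tfrac12\bigl(c_{g-1}+(l_{g-1}-1)(\beta_g-1)\bigr)\ge\beta_g$, so $\beta_g$ is an odd element $\le\delta$. When moreover $\beta_g\not\equiv1\pmod{l_{g-1}}$ the same computation shows $\beta_g$ is itself an odd local minimum of some value $n$, hence an isolated point of $S_n$; if $n=\min w_0$ this makes $\min w_0$ odd, and if $n>\min w_0$ it disconnects $S_n$ from a global minimum — either way (2) fails, and symmetrically $\overline R$ acquires extra leaves so (3) fails.

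The one genuinely delicate case — and the main obstacle — is $l_{g-1}$ even ($\ge4$) with $\beta_g\equiv1\pmod{l_{g-1}}$, the model being $\mathcal{S}=\langle4,9\rangle$: here $\beta_g$ is an odd element $\le\delta$ but is \emph{not} a local minimum (its predecessor lies in $\langle\beta_0,\dots,\beta_{g-1}\rangle$), all local minima may be even, and $\min w_0$ is even, so the odd element is invisible to the local-minimum count. What detects it is the failure of the rank condition at an \emph{odd} level: for $\langle4,9\rangle$ the space $S_{-3}$ has several components although $\min w_0=-4$. The plan is to analyse the weight walk directly: letting $s_0$ be the smallest odd semigroup element, the identity $w_0(\ell)\equiv\ell$ forces $s_0$ to sit immediately to the right of an even local minimum $\alpha_0$, and the interplay of the ascending run through $s_0$, its Gorenstein mirror, and the interior local maxima of $w_0$ between the resulting low regions should produce a barrier of value $\ge\min w_0+2$, hence a disconnection of $S_n$ at some odd $n>\min w_0$, contradicting $\mathrm{rank}_\bZ\bH^0_{2n}=1$; the analogous statement for (3) is that this same multiplicity of low regions prevents $\overline R$ from having a single leaf. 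Pinning down the barrier parity cleanly — showing the odd element obstructs connectivity at an \emph{odd} level rather than only at even ones — is where I expect the real work to lie; the remainder is bookkeeping with \eqref{eq:w_0valtozasa} and \eqref{eq:Gorsym}.
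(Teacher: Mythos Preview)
Your overall architecture is sound and the reductions $(1)\Rightarrow(3)$ and $(3)\Rightarrow(2)$ are correct; your case $l_{g-1}$ odd is also fine (the reason $\beta_j-1\notin\mathcal S$ for $j\le g-1$ is that $\beta_j-1\in\langle\beta_0,\dots,\beta_{j-1}\rangle$ would force $l_{j-1}\mid\beta_j-1$, hence $l_j=\gcd(l_{j-1},\beta_j)=1$ and $j=g$). The gap is precisely the ``delicate case'' you flag, but you are substantially overthinking it. No barrier-hunting or analysis of interior maxima is needed; the disconnection is visible one step to the \emph{right} of $\beta_g$, not to the left. Since $\beta_g\in\mathcal S$, formula \eqref{eq:w_0valtozasa} gives $w_0(\beta_g+1)=w_0(\beta_g)+1$, so $\beta_g+1\notin S_{w_0(\beta_g)}$. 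By Gorenstein symmetry both $\beta_g$ and $c-\beta_g$ lie in $S_{w_0(\beta_g)}$, and once you know $\beta_g<\delta$ the point $\beta_g+1$ separates them, so $S_{w_0(\beta_g)}$ is disconnected at the odd level $w_0(\beta_g)\equiv\beta_g\pmod 2$. Ruling out $\beta_g=\delta$ is a one-liner: in your delicate case $\beta_g-1\in\mathcal S$ and $\beta_g\in\mathcal S$ give $w_0(\beta_g-1)<w_0(\beta_g)<w_0(\beta_g+1)$, while symmetry forces $w_0(\delta-1)=w_0(\delta+1)$. Your own $\langle4,9\rangle$ check (level $-3$) is exactly this computation.

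The paper organizes $(2)\Rightarrow(1)$ differently, splitting on $\beta_g>\delta$ versus $\beta_g\le\delta$ rather than on the parity of $l_{g-1}$. In the first case a short manipulation of \eqref{eq:ci} yields $l_{g-1}<3+(2-c_{g-1})/(\beta_g-1)$, leaving only $l_{g-1}=2$ or the $g=1$, $\beta_0=3$ exception, dispatched because multiplicity $3$ forces an odd local minimum value $-1$. In the second case the argument is exactly the two-line disconnection above. Your parity split works just as well and is arguably more transparent; the pivot $(\ast)$ is a pleasant reformulation of $(1)$, though in the end you do not really route $(2)\Rightarrow(1)$ through it --- you go straight to $\neg(1)\Rightarrow\neg(2)$, and $(\ast)$ only serves to locate the odd element $\beta_g\le\delta$ in the even-$l_{g-1}$ case.
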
 

\begin{proof} We will use the correspondence (\ref{eq:locmin}) between local minimum points of the weight function and the kernel of the $U$-action. In this language, property \textit{(2)} corresponds to the facts that $S_{2k+1}$ is connected for any $k \geq (\min w_0)/2$, and, for any local minimum point $\ell$, the local minimum value $w(\ell)$ is even. Notice that the parity of $\ell \in \mathbb{Z}_{\geq0}$ and $w(\ell) \in \mathbb{Z}$ always agree, hence this second condition is equivalent to every local minimum point being even. 

\noindent \textit{(1)} $\Rightarrow$ \textit{(2)}. If $l_{g-1}=2$, we already saw in (\ref{eq:delta<beta_g}) that $ \beta_{g} > \delta$ (more specifically, there we only considered the case of $g >1$, however, for $g=1$ we have $\beta_1 > c > \delta$ trivially). Thus until $\delta$ all the local minima of the weight function (or, equivalently, all the semigroup values) are even, thus their analytic weight $w(\ell)=2\mathfrak{h}(\ell) - \ell$ is also even, so is then the grading on the homogeneous parts of the kernel of the $U$-action. On the other hand, Proposition \ref{prop:lg-1=2} implies that $\min w_0 \in \mathfrak{N}$ and hence, by Proposition \ref{prop:ninNhaSn},  $S_{2k+1}$ is connected for any $k \geq (\min w_0)/2$.

\noindent \textit{(2)} $\Rightarrow$ \textit{(1)}. First we consider the case of $\beta_g > \delta$. Then by (\ref{eq:ci})
\begin{align*}
    2\beta_g > c=c_g=c_{g-1}+(\beta_g-1)(l_{g-1}-1) \Leftrightarrow \  &\\
    \Leftrightarrow 3\beta_g-1-c_{g-1} >\ & l_{g-1}(\beta_g-1)\Leftrightarrow \\
    \Leftrightarrow 3+\frac{2-c_{g-1}}{\beta_{g}-1} >\  &l_{g-1},
\end{align*}
so $ l_{g-1}$ must be $2$, except when $c_{g-1}\leq1$. But if $g > 1$, then $c_{g-1} >0$ and $l_{g-1}|c_{g-1}$, so $l_{g-1}\neq 2$ can only happen if $g=1$, i.e. we only have a single Puiseux pair. Then $2\leq l_{g-1}=\beta_0 < \beta_1 = \beta_g$ and $c_0=0$, so the only possible cases with $l_{g-1} >2$  are: $\beta_0=3, \beta_1 >3$ such that ${\rm gcd}(3,\beta_1)=1$. However, in these cases the multiplicity is $3$, which gives a local minimum point with weight $-1$ odd (see part \textit{(vii)} of Proposition \ref{rem:prelim}). 

We can turn to the $\beta_g \leq \delta$ case.
We prove that this case is in fact vacuous: a semigroup $\mathcal{S}_{C, o}$ of an irreducible curve singularity with $\beta_g < \delta$ cannot have lattice cohomology module $\mathbb{H}$ satisfying the conditions of property \textit{(2)}. Indeed, every semigroup element $s \in \mathcal{S}_{C, o}$, with $s-1 \notin \mathcal{S}_{C, o}$ is a local minimum point (see Definition \ref{def:locmin}) and hence must be even. As there are no consecutive semigroup elements between $0$ and $\beta_g-1$ (see part \textit{(v)} of Proposition \ref{rem:prelim}) the minimal generators $\beta_0, \beta_1, \ldots, \beta_{g-1}$ must all be even. Since ${\rm gcd}(\beta_0, \ldots, \beta_{g-1}, \beta_g)=1$, $\beta_g$ must be odd and thus, by the previous local minimum point argument, it must have $\beta_g-1 \in \mathcal{S}_{C, o}$. Now by formula (\ref{eq:w_0valtozasa}) we have $w(\beta_g-1) < w(\beta_g)< w(\beta_g+1)$  and, thus, $\beta_g \neq \delta$, since that would contradict the Gorenstein symmetry (\ref{eq:Gorsym}).  But then $S_{w(\beta_g)}$ cannot be connected (with $w(\beta_g) \equiv \beta_g \equiv 1\ \mod2$), since $\{\beta_g, c-\beta_g\} \in S_{w(\beta_g)}$ and $\beta_{g}+1 \notin S_{w(\beta_g)}$. So property \textit{(2)} cannot be satisfied if $\beta_g \leq \delta$.

The equivalence \textit{(2)} $\Leftrightarrow$ \textit{(3)} is clear through correspondence (\ref{eq:locmin}) between the kernel of the $U$-action and the local minimum points of the graded root.
\end{proof}

\begin{corollary}
    The proof in fact shows that $l_{g-1} = 2$ is also equivalent to the following: $\beta_g > \delta$ and the multiplicity is not $3$.
\end{corollary}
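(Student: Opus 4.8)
The statement is a biconditional, and essentially all of its content is already buried inside the proof of the previous proposition; the task is to isolate the purely semigroup-theoretic core of the $(1)\Leftrightarrow(2)$ argument while discarding every reference to the module property $(2)$. The plan is therefore to prove the two implications separately, using only equation (\ref{eq:ci}), the symmetry identity $c=2\delta$, and Proposition \ref{prop:lg-1=2}.

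For the forward implication $l_{g-1}=2 \Rightarrow (\beta_g>\delta \text{ and } m\neq 3)$, I would first observe that $l_{g-1}=\gcd(\beta_0,\dots,\beta_{g-1})$ divides $\beta_0=m$, so $l_{g-1}=2$ forces $m$ to be even and hence $m\neq 3$. For the inequality $\beta_g>\delta$ I would simply quote equation (\ref{eq:delta<beta_g}), which is established inside the proof of Proposition \ref{prop:lg-1=2} in the case $g>1$, and then dispatch the remaining case $g=1$ by hand: there $\mathcal{S}_{C,o}=\langle 2,\beta_1\rangle$ with $\beta_1$ odd, so $c=\beta_1-1$ and $\delta=(\beta_1-1)/2<\beta_1=\beta_g$.

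For the converse $(\beta_g>\delta \text{ and } m\neq 3)\Rightarrow l_{g-1}=2$, I would specialise equation (\ref{eq:ci}) to $i=g$ (where $l_g=1$), obtaining $c=c_{g-1}+(l_{g-1}-1)(\beta_g-1)$, and combine it with $c=2\delta<2\beta_g$. Rearranging exactly as in the proof of the proposition yields
\[
l_{g-1} < 3 + \frac{2-c_{g-1}}{\beta_g-1}.
\]
Since $l_{g-1}\geq 2$ always holds, it only remains to exclude $l_{g-1}\geq 3$. If $g\geq 2$ I would note that $l_{g-1}\mid c_{g-1}$ (because $\langle\beta_0,\dots,\beta_{g-1}\rangle\subseteq l_{g-1}\mathbb{Z}$) and that $c_{g-1}>0$, so that $c_{g-1}\geq l_{g-1}\geq 2$; then the displayed fraction is $\leq 0$ and $l_{g-1}<3$, forcing $l_{g-1}=2$. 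If $g=1$, then $c_0=0$ and $\beta_1>\beta_0=m\geq 2$ give $m<3+2/(\beta_1-1)\leq 4$, so $m\in\{2,3\}$; the hypothesis $m\neq 3$ then pins down $m=2=l_{g-1}$.

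The only delicate point — and precisely the reason the hypothesis $m\neq 3$ cannot be dropped — is the single-Puiseux-pair family $\mathcal{S}_{C,o}=\langle 3,\beta_1\rangle$: here $c=2(\beta_1-1)$ and $\delta=\beta_1-1<\beta_1$, so $\beta_g>\delta$ holds while $l_{g-1}=3$. The main obstacle is thus not a computation but the bookkeeping that confines all such exceptions to $g=1$: I would emphasize that multiplicity $3$ can occur only when $g=1$, since $3$ is prime, whence $\gcd(3,\beta_1)\in\{1,3\}$ and a nontrivial common factor would contradict minimality of the generating set, so that $g\geq 2$ automatically excludes $m=3$ and produces no further exceptional cases.
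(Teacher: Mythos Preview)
Your proof is correct and follows exactly the route implicit in the paper, which gives no separate argument for the corollary but simply flags it as the semigroup-theoretic residue of the $(1)\Leftrightarrow(2)$ proof in the preceding proposition; you have extracted precisely that content. One small point: your closing remark that $m=3$ forces $g=1$ needs the strict decrease $l_0>l_1>\cdots>l_g=1$ (equivalently $n_i\geq 2$, so that $l_1=1$ leaves no room for further generators) rather than just minimality of $\beta_1$, but this is side commentary and the biconditional itself is fully established in your first three paragraphs.
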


\section{Lattice cohomology and the Seifert form}\label{subs:seifert}
\subsection{}
In this section we show that  in general the Seifert form $S(C,o)$ and the lattice cohomology module $\bH^*(C,o)$ of a plane curve germ $(C,o) \subset (\bC^2,o)$ do not determine each other. (For the definition of the Seifert form see e.g. \cite[Chapter 10]{wall}.) Note that for irreducible plane curves both are complete embedded topological invariants, so our examples will all be reducible.

First, we present an example of two singularities having the same Seifert form but distinct lattice cohomology modules. Consider the germs $(C_{r,s},o) \subset (\bC^2,o)$ for some $r, s \in \mathbb{N}$ given by
$$f_{r,s}(x,y) = \big( (x^3-y^2)^2-x^{s+6}-4x^{(s+9)/2}y\big)\big( (x^2-y^5)^2-y^{r+10}-4xy^{(r+15)/2}\big)=0.$$
It was shown in \cite{duboismichel} that if $r$ and $s$ are odd integers larger then 10, then the germs $C_{r,s}$ and $C_{s-8,r+8}$ have isomorphic integral Seifert forms.

Consider the plane curve germs $(C_{11,11},o)$ and $(C_{3,19},o)$ for instance. According to the previous paragraph their integral Seifert forms are isomorphic, whereas their lattice cohomology modules are distinct as can be seen on the image below. It shows their respective graded roots (corresponding to $\bH^0$) and a number of circles representing $\bH^1$. Clearly, $\bH^*(C_{11,11},o) \not\cong \bH^*(C_{3,19},o)$.\vspace{2mm}
\begin{center}

\tiny
\tikzset{every picture/.style={line width=0.75pt}} 
\resizebox{12cm}{!}{
}

\end{center}\vspace{2mm}

Next we give an example of two singularities having isomorphic lattice cohomology modules but distinct Seifert forms. Let $n \geq 1$ be an integer and consider the germs $(C,o)$ and $(C',o)$ given by $(x^n+y^{n+1})(x^{n+1}+y^n)=0$ and $(x^{n-1}+y^n)(x^{n+2}+y^{n+1})=0$ respectively.

 The intersection multiplicity of the two branches of $(C,o)$ is $n^2$, whereas in the case of $(C',o)$ it is $n^2-1$. Since the intersection multiplicities of different branches are determined by the integral Seifert form according to \cite{kaenders}, this shows that $(C,o)$ and $(C',o)$ have distinct Seifert forms. On the other hand, we claim that $R(C,o) \cong R(C',o)$ for all $n$. As an example, consider the graded root $R(C,o) \cong R(C',o)$ depicted below in the case $n=4$.
\begin{center}

\tiny
\tikzset{every picture/.style={line width=0.75pt}} 
\resizebox{5cm}{!}{
\begin{tikzpicture}[x=0.75pt,y=0.75pt,yscale=-.5,xscale=.6]

\draw [color={rgb, 255:red, 225; green, 225; blue, 225 }  ,draw opacity=1 ][fill={rgb, 255:red, 225; green, 225; blue, 225 }  ,fill opacity=1 ]   (90,230) -- (370,230) ;
\draw [color={rgb, 255:red, 225; green, 225; blue, 225 }  ,draw opacity=1 ][fill={rgb, 255:red, 225; green, 225; blue, 225 }  ,fill opacity=1 ]   (90,470) -- (370,470) ;
\draw [color={rgb, 255:red, 225; green, 225; blue, 225 }  ,draw opacity=1 ][fill={rgb, 255:red, 225; green, 225; blue, 225 }  ,fill opacity=1 ]   (90,440) -- (370,440) ;
\draw [color={rgb, 255:red, 225; green, 225; blue, 225 }  ,draw opacity=1 ][fill={rgb, 255:red, 225; green, 225; blue, 225 }  ,fill opacity=1 ]   (90,350) -- (370,350) ;
\draw [color={rgb, 255:red, 225; green, 225; blue, 225 }  ,draw opacity=1 ][fill={rgb, 255:red, 225; green, 225; blue, 225 }  ,fill opacity=1 ]   (90,380) -- (370,380) ;
\draw [color={rgb, 255:red, 225; green, 225; blue, 225 }  ,draw opacity=1 ][fill={rgb, 255:red, 225; green, 225; blue, 225 }  ,fill opacity=1 ]   (90,410) -- (370,410) ;
\draw [color={rgb, 255:red, 225; green, 225; blue, 225 }  ,draw opacity=1 ][fill={rgb, 255:red, 225; green, 225; blue, 225 }  ,fill opacity=1 ]   (90,500) -- (370,500) ;
\draw [color={rgb, 255:red, 225; green, 225; blue, 225 }  ,draw opacity=1 ][fill={rgb, 255:red, 225; green, 225; blue, 225 }  ,fill opacity=1 ]   (90,320) -- (370,320) ;
\draw [color={rgb, 255:red, 225; green, 225; blue, 225 }  ,draw opacity=1 ][fill={rgb, 255:red, 225; green, 225; blue, 225 }  ,fill opacity=1 ]   (90,200) -- (370,200) ;
\draw [color={rgb, 255:red, 225; green, 225; blue, 225 }  ,draw opacity=1 ][fill={rgb, 255:red, 225; green, 225; blue, 225 }  ,fill opacity=1 ]   (90,260) -- (370,260) ;
\draw [color={rgb, 255:red, 225; green, 225; blue, 225 }  ,draw opacity=1 ][fill={rgb, 255:red, 225; green, 225; blue, 225 }  ,fill opacity=1 ]   (90,290) -- (370,290) ;
\draw [color={rgb, 255:red, 225; green, 225; blue, 225 }  ,draw opacity=1 ]   (90,80) -- (370,80) ;
\draw [color={rgb, 255:red, 225; green, 225; blue, 225 }  ,draw opacity=1 ][fill={rgb, 255:red, 225; green, 225; blue, 225 }  ,fill opacity=1 ]   (90,110) -- (370,110) ;
\draw [color={rgb, 255:red, 225; green, 225; blue, 225 }  ,draw opacity=1 ][fill={rgb, 255:red, 225; green, 225; blue, 225 }  ,fill opacity=1 ]   (90,140) -- (370,140) ;
\draw [color={rgb, 255:red, 225; green, 225; blue, 225 }  ,draw opacity=1 ][fill={rgb, 255:red, 225; green, 225; blue, 225 }  ,fill opacity=1 ]   (90,170) -- (370,170) ;
\draw [color={rgb, 255:red, 225; green, 225; blue, 225 }  ,draw opacity=1 ]   (90,50) -- (370,50) ;
\draw [color={rgb, 255:red, 0; green, 0; blue, 0 }  ,draw opacity=1 ]   (230,230) -- (180,260) ;
\draw [shift={(180,260)}, rotate = 149.04] [color={rgb, 255:red, 0; green, 0; blue, 0 }  ,draw opacity=1 ][fill={rgb, 255:red, 0; green, 0; blue, 0 }  ,fill opacity=1 ][line width=0.75]      (0, 0) circle [x radius= 3.35, y radius= 3.35]   ;
\draw [color={rgb, 255:red, 0; green, 0; blue, 0 }  ,draw opacity=1 ]   (200,440) -- (200,470) ;
\draw [shift={(200,470)}, rotate = 90] [color={rgb, 255:red, 0; green, 0; blue, 0 }  ,draw opacity=1 ][fill={rgb, 255:red, 0; green, 0; blue, 0 }  ,fill opacity=1 ][line width=0.75]      (0, 0) circle [x radius= 3.35, y radius= 3.35]   ;
\draw [color={rgb, 255:red, 0; green, 0; blue, 0 }  ,draw opacity=1 ]   (230,350) -- (200,380) ;
\draw [shift={(200,380)}, rotate = 135] [color={rgb, 255:red, 0; green, 0; blue, 0 }  ,draw opacity=1 ][fill={rgb, 255:red, 0; green, 0; blue, 0 }  ,fill opacity=1 ][line width=0.75]      (0, 0) circle [x radius= 3.35, y radius= 3.35]   ;
\draw [color={rgb, 255:red, 0; green, 0; blue, 0 }  ,draw opacity=1 ]   (160,380) -- (160,410) ;
\draw [shift={(160,410)}, rotate = 90] [color={rgb, 255:red, 0; green, 0; blue, 0 }  ,draw opacity=1 ][fill={rgb, 255:red, 0; green, 0; blue, 0 }  ,fill opacity=1 ][line width=0.75]      (0, 0) circle [x radius= 3.35, y radius= 3.35]   ;
\draw [color={rgb, 255:red, 0; green, 0; blue, 0 }  ,draw opacity=1 ]   (160,350) -- (160,380) ;
\draw [shift={(160,380)}, rotate = 90] [color={rgb, 255:red, 0; green, 0; blue, 0 }  ,draw opacity=1 ][fill={rgb, 255:red, 0; green, 0; blue, 0 }  ,fill opacity=1 ][line width=0.75]      (0, 0) circle [x radius= 3.35, y radius= 3.35]   ;
\draw [color={rgb, 255:red, 0; green, 0; blue, 0 }  ,draw opacity=1 ]   (230,320) -- (160,350) ;
\draw [shift={(160,350)}, rotate = 156.8] [color={rgb, 255:red, 0; green, 0; blue, 0 }  ,draw opacity=1 ][fill={rgb, 255:red, 0; green, 0; blue, 0 }  ,fill opacity=1 ][line width=0.75]      (0, 0) circle [x radius= 3.35, y radius= 3.35]   ;
\draw    (260,440) -- (260,470) ;
\draw [shift={(260,470)}, rotate = 90] [color={rgb, 255:red, 0; green, 0; blue, 0 }  ][fill={rgb, 255:red, 0; green, 0; blue, 0 }  ][line width=0.75]      (0, 0) circle [x radius= 3.35, y radius= 3.35]   ;
\draw    (200,410) -- (200,440) ;
\draw [shift={(200,440)}, rotate = 90] [color={rgb, 255:red, 0; green, 0; blue, 0 }  ][fill={rgb, 255:red, 0; green, 0; blue, 0 }  ][line width=0.75]      (0, 0) circle [x radius= 3.35, y radius= 3.35]   ;
\draw    (260,380) -- (260,410) ;
\draw [shift={(260,410)}, rotate = 90] [color={rgb, 255:red, 0; green, 0; blue, 0 }  ][fill={rgb, 255:red, 0; green, 0; blue, 0 }  ][line width=0.75]      (0, 0) circle [x radius= 3.35, y radius= 3.35]   ;
\draw    (200,380) -- (200,410) ;
\draw [shift={(200,410)}, rotate = 90] [color={rgb, 255:red, 0; green, 0; blue, 0 }  ][fill={rgb, 255:red, 0; green, 0; blue, 0 }  ][line width=0.75]      (0, 0) circle [x radius= 3.35, y radius= 3.35]   ;
\draw    (230,320) -- (230,350) ;
\draw [shift={(230,350)}, rotate = 90] [color={rgb, 255:red, 0; green, 0; blue, 0 }  ][fill={rgb, 255:red, 0; green, 0; blue, 0 }  ][line width=0.75]      (0, 0) circle [x radius= 3.35, y radius= 3.35]   ;
\draw    (230,290) -- (230,320) ;
\draw [shift={(230,320)}, rotate = 90] [color={rgb, 255:red, 0; green, 0; blue, 0 }  ][fill={rgb, 255:red, 0; green, 0; blue, 0 }  ][line width=0.75]      (0, 0) circle [x radius= 3.35, y radius= 3.35]   ;
\draw    (230,260) -- (230,290) ;
\draw [shift={(230,290)}, rotate = 90] [color={rgb, 255:red, 0; green, 0; blue, 0 }  ][fill={rgb, 255:red, 0; green, 0; blue, 0 }  ][line width=0.75]      (0, 0) circle [x radius= 3.35, y radius= 3.35]   ;
\draw    (230,230) -- (230,260) ;
\draw [shift={(230,260)}, rotate = 90] [color={rgb, 255:red, 0; green, 0; blue, 0 }  ][fill={rgb, 255:red, 0; green, 0; blue, 0 }  ][line width=0.75]      (0, 0) circle [x radius= 3.35, y radius= 3.35]   ;
\draw    (230,200) -- (230,230) ;
\draw [shift={(230,230)}, rotate = 90] [color={rgb, 255:red, 0; green, 0; blue, 0 }  ][fill={rgb, 255:red, 0; green, 0; blue, 0 }  ][line width=0.75]      (0, 0) circle [x radius= 3.35, y radius= 3.35]   ;
\draw    (230,170) -- (230,200) ;
\draw [shift={(230,200)}, rotate = 90] [color={rgb, 255:red, 0; green, 0; blue, 0 }  ][fill={rgb, 255:red, 0; green, 0; blue, 0 }  ][line width=0.75]      (0, 0) circle [x radius= 3.35, y radius= 3.35]   ;
\draw    (230,140) -- (230,170) ;
\draw [shift={(230,170)}, rotate = 90] [color={rgb, 255:red, 0; green, 0; blue, 0 }  ][fill={rgb, 255:red, 0; green, 0; blue, 0 }  ][line width=0.75]      (0, 0) circle [x radius= 3.35, y radius= 3.35]   ;
\draw    (230,110) -- (230,140) ;
\draw [shift={(230,140)}, rotate = 90] [color={rgb, 255:red, 0; green, 0; blue, 0 }  ][fill={rgb, 255:red, 0; green, 0; blue, 0 }  ][line width=0.75]      (0, 0) circle [x radius= 3.35, y radius= 3.35]   ;
\draw    (230,80) -- (230,110) ;
\draw [shift={(230,110)}, rotate = 90] [color={rgb, 255:red, 0; green, 0; blue, 0 }  ][fill={rgb, 255:red, 0; green, 0; blue, 0 }  ][line width=0.75]      (0, 0) circle [x radius= 3.35, y radius= 3.35]   ;
\draw    (230,50) -- (230,80) ;
\draw [shift={(230,80)}, rotate = 90] [color={rgb, 255:red, 0; green, 0; blue, 0 }  ][fill={rgb, 255:red, 0; green, 0; blue, 0 }  ][line width=0.75]      (0, 0) circle [x radius= 3.35, y radius= 3.35]   ;
\draw    (260,410) -- (260,440) ;
\draw [shift={(260,440)}, rotate = 90] [color={rgb, 255:red, 0; green, 0; blue, 0 }  ][fill={rgb, 255:red, 0; green, 0; blue, 0 }  ][line width=0.75]      (0, 0) circle [x radius= 3.35, y radius= 3.35]   ;
\draw    (230,350) -- (260,380) ;
\draw [shift={(260,380)}, rotate = 45] [color={rgb, 255:red, 0; green, 0; blue, 0 }  ][fill={rgb, 255:red, 0; green, 0; blue, 0 }  ][line width=0.75]      (0, 0) circle [x radius= 3.35, y radius= 3.35]   ;
\draw    (300,380) -- (300,410) ;
\draw [shift={(300,410)}, rotate = 90] [color={rgb, 255:red, 0; green, 0; blue, 0 }  ][fill={rgb, 255:red, 0; green, 0; blue, 0 }  ][line width=0.75]      (0, 0) circle [x radius= 3.35, y radius= 3.35]   ;
\draw    (300,350) -- (300,380) ;
\draw [shift={(300,380)}, rotate = 90] [color={rgb, 255:red, 0; green, 0; blue, 0 }  ][fill={rgb, 255:red, 0; green, 0; blue, 0 }  ][line width=0.75]      (0, 0) circle [x radius= 3.35, y radius= 3.35]   ;
\draw    (230,320) -- (300,350) ;
\draw [shift={(300,350)}, rotate = 23.2] [color={rgb, 255:red, 0; green, 0; blue, 0 }  ][fill={rgb, 255:red, 0; green, 0; blue, 0 }  ][line width=0.75]      (0, 0) circle [x radius= 3.35, y radius= 3.35]   ;
\draw    (230,80) -- (270,110) ;
\draw [shift={(270,110)}, rotate = 36.87] [color={rgb, 255:red, 0; green, 0; blue, 0 }  ][fill={rgb, 255:red, 0; green, 0; blue, 0 }  ][line width=0.75]      (0, 0) circle [x radius= 3.35, y radius= 3.35]   ;
\draw [color={rgb, 255:red, 0; green, 0; blue, 0 }  ,draw opacity=1 ]   (230,80) -- (190,110) ;
\draw [shift={(190,110)}, rotate = 143.13] [color={rgb, 255:red, 0; green, 0; blue, 0 }  ,draw opacity=1 ][fill={rgb, 255:red, 0; green, 0; blue, 0 }  ,fill opacity=1 ][line width=0.75]      (0, 0) circle [x radius= 3.35, y radius= 3.35]   ;
\draw    (180,260) -- (180,290) ;
\draw [shift={(180,290)}, rotate = 90] [color={rgb, 255:red, 0; green, 0; blue, 0 }  ][fill={rgb, 255:red, 0; green, 0; blue, 0 }  ][line width=0.75]      (0, 0) circle [x radius= 3.35, y radius= 3.35]   ;
\draw    (230,30) -- (230,50) ;
\draw [shift={(230,50)}, rotate = 90] [color={rgb, 255:red, 0; green, 0; blue, 0 }  ][fill={rgb, 255:red, 0; green, 0; blue, 0 }  ][line width=0.75]      (0, 0) circle [x radius= 3.35, y radius= 3.35]   ;
\draw  [dash pattern={on 0.84pt off 2.51pt}]  (230,30) -- (230,10) ;
\draw [color={rgb, 255:red, 0; green, 0; blue, 0 }  ,draw opacity=1 ]   (230,230) -- (280,260) ;
\draw [shift={(280,260)}, rotate = 30.96] [color={rgb, 255:red, 0; green, 0; blue, 0 }  ,draw opacity=1 ][fill={rgb, 255:red, 0; green, 0; blue, 0 }  ,fill opacity=1 ][line width=0.75]      (0, 0) circle [x radius= 3.35, y radius= 3.35]   ;
\draw    (280,260) -- (280,290) ;
\draw [shift={(280,290)}, rotate = 90] [color={rgb, 255:red, 0; green, 0; blue, 0 }  ][fill={rgb, 255:red, 0; green, 0; blue, 0 }  ][line width=0.75]      (0, 0) circle [x radius= 3.35, y radius= 3.35]   ;

\draw (67,102.4) node [anchor=north west][inner sep=0.75pt]    {$\textcolor[rgb]{0.88,0.88,0.88}{0}$};
\draw (59,132.4) node [anchor=north west][inner sep=0.75pt]    {$\textcolor[rgb]{0.88,0.88,0.88}{-1}$};
\draw (59,162.4) node [anchor=north west][inner sep=0.75pt]  [color={rgb, 255:red, 225; green, 225; blue, 225 }  ,opacity=1 ]  {$-2$};
\draw (59,192.4) node [anchor=north west][inner sep=0.75pt]  [color={rgb, 255:red, 225; green, 225; blue, 225 }  ,opacity=1 ]  {$-3$};
\draw (59,222.4) node [anchor=north west][inner sep=0.75pt]  [color={rgb, 255:red, 225; green, 225; blue, 225 }  ,opacity=1 ]  {$-4$};
\draw (59,252.4) node [anchor=north west][inner sep=0.75pt]  [color={rgb, 255:red, 225; green, 225; blue, 225 }  ,opacity=1 ]  {$-5$};
\draw (59,282.4) node [anchor=north west][inner sep=0.75pt]  [color={rgb, 255:red, 225; green, 225; blue, 225 }  ,opacity=1 ]  {$-6$};
\draw (59,312.4) node [anchor=north west][inner sep=0.75pt]  [color={rgb, 255:red, 225; green, 225; blue, 225 }  ,opacity=1 ]  {$-7$};
\draw (59,342.4) node [anchor=north west][inner sep=0.75pt]  [color={rgb, 255:red, 225; green, 225; blue, 225 }  ,opacity=1 ]  {$-8$};
\draw (59,372.4) node [anchor=north west][inner sep=0.75pt]  [color={rgb, 255:red, 225; green, 225; blue, 225 }  ,opacity=1 ]  {$-9$};
\draw (51,402.4) node [anchor=north west][inner sep=0.75pt]  [color={rgb, 255:red, 225; green, 225; blue, 225 }  ,opacity=1 ]  {$-10$};
\draw (51,492.4) node [anchor=north west][inner sep=0.75pt]  [color={rgb, 255:red, 225; green, 225; blue, 225 }  ,opacity=1 ]  {$-13$};
\draw (51,462.4) node [anchor=north west][inner sep=0.75pt]  [color={rgb, 255:red, 225; green, 225; blue, 225 }  ,opacity=1 ]  {$-12$};
\draw (51,432.4) node [anchor=north west][inner sep=0.75pt]  [color={rgb, 255:red, 225; green, 225; blue, 225 }  ,opacity=1 ]  {$-11$};
\draw (67,72.4) node [anchor=north west][inner sep=0.75pt]    {$\textcolor[rgb]{0.88,0.88,0.88}{1}$};
\draw (67,42.4) node [anchor=north west][inner sep=0.75pt]    {$\textcolor[rgb]{0.88,0.88,0.88}{2}$};

\end{tikzpicture}}

\end{center}
\noindent For a conceptual reason for why distinct plane curve singularities may have isomorphic lattice cohomology modules see \cite{NS25}.

\begin{remark}
    The latter example implies, in particular, that $\bH^*$ is not a complete topological invariant in the case of multiple branches, compare with Theorem \ref{thm:MAIN}. See \cite{FiltLC} for the filtered lattice homology of curve singularities which \emph{is} a complete invariant in the reducible case as well.
\end{remark}

\section{Lattice cohomology and the multivariate Poincaré series}\label{sec:poincaré}

\noindent The Poincaré series $P_{C,o}$ (see Remark \ref{rem:poincaré}) of a plane curve singularity $(C,o)$ is equivalent to the Alexander polynomial $\Delta_{L_C}$ of its link $L_C$ \cite{CDG03}
and hence to its embedded topological type \cite{Y84}. Thus, the lattice cohomology of a plane curve singularity is determined by its Poincaré series. In this section we give an example showing that this is no longer the case in higher codimensions.\vspace{3
mm}

 In \cite{CDG7}, Campillo, Delgado and Gusein-Zade give the following example: Let $(C,o) = (C_1,o)\cup(C_2,o) \subset (\bC^5,o)$ and $(C',o) = (C'_1,o)\cup(C'_2,o)\subset (\bC^6,o)$ be the germs given by the parametrizations $C_1 = \{(t^2,t^3,t^2,t^4,t^5) \}$, $C_2 = \{(u^2,u^3,u^2,u^4,u^6) \}$, $C'_1 =  \{(t^3,t^4,t^5,t^4,t^5,t^6 )\}$ and $C'_2=\{(u^3,u^4,u^5,u^5,u^6,u^7 ) \}$. On one hand, the Poincaré series is $P(t_1,t_2) = 1+t_1^3t_2^3$ in both cases. On the other hand, their lattice cohomologies are different, see the weight tables below.\vspace{3mm}

\begin{center}

\begin{tabular}{|ccccc}
     2 & 1 & 0 & 1 & 0  \\
    1 & 0 & -1 & 0 & 1  \\
    0 & -1 & -2 & -1 & 0 \\
    1 & 0 & -1 & 0 & 1  \\
    0 & 1 & 0 & 1 & 2  \\
    \hline
\end{tabular} \hspace{20mm} \begin{tabular}{|ccccc}
     0 & -1 & -2 & -3 & -4 \\
    -1 & -2 & -3 & -4 & -3 \\
    0 & -1 & -2 & -3 & -2 \\
    1 & 0 & -1 & -2 & -1 \\
    0 & 1 & 0 & -1 & 0 \\
    \hline
\end{tabular}\vspace{4mm}

\footnotesize

Weight functions of the curve singularities $(C,o)$ and $(C',o)$ respectively.\vspace{2mm}
\end{center}

\noindent The weight functions are only shown in a finite rectangle (between the origin and the conductors). This is enough to determine $\bH^*$, see \cite[Lemma 4.2.1]{AgNeCurve}.

The quantity $\min(w_0)$ is an invariant of the bigrading on $\bH^*$: it is the minimal integer $n$ such than $\bH^0_{2n} \not= 0$. The tables show that $\min(w_0) = -2$ for $(C,o)$, whereas $\min(w_0) = -4$ in the case of $(C',o)$. It follows that $\bH^*(C,o) \not\cong \bH^*(C',o)$, as claimed.



\begin{thebibliography}{9999999}

    \bibitem[AN21a]{AgNeSurf} Ágoston, T.; Némethi, A.: \emph{Analytic lattice cohomology of surface singularities}, arXiv:2108.12294

    \bibitem[AN21b]{AgNeHigh} Ágoston, T.; Némethi, A.: \emph{Analytic lattice cohomology of isolated singularities}, arXiv:2109.11266

    \bibitem[AN23]{AgNeCurve} Ágoston, T.; Némethi, A.: \emph{Analytic lattice cohomology of isolated curve singularities},	arXiv:2301.08981

    \bibitem[BDR03]{bdr03} Barucci, V.; D'Anna, M,; Froberg, R.: \emph{On plane algebroid curves}, Lecture Notes Pure Appl. Math. 231 (2003), 37-50

    \bibitem[B72]{B72} Bresinsky, H.: \emph{Semigroups Corresponding to Algebroid Branches in the Plane}, Proceedings of the American Mathematical Society, Vol. 32, No. 2 (Apr., 1972), pp. 381-384

    \bibitem[BK86]{BK86} Brieskorn, E.; Knörrer, H.: \emph{Plane algebraic curves}. Birkhäuser Verlag (1986).

     \bibitem[BM94]{duboismichel} Du Bois, Ph.; Michel F.: \emph{The integral Seifert form does not determine the topology of plane curve germs}. Journal of Alg. Geom. 3 (1994)

    \bibitem[CDG03]{CDG03}
     Campillo, A.; Delgado de la Mata, E.;  Gusein-Zade, S. M.: \emph{The Alexander polynomial of a plane curve singularity via the
ring of functions on it}. Duke Math J. 117 , no. 1, 125–156 (2003)


     \bibitem[CDG07]{CDG7} Campillo, A.; Delgado de la Mata, F.; Gusein-Zade, S.M.: \emph{Multi-index filtrations and motivic Poincare series}, Mh Math 150, 193–209 (2007)


    \bibitem[DM87]{delaMata87} Delgado de la Mata, F.: \emph{The semigroup of values of a curve singularity with several branches} Manuscripta Math. 59 (1987), 347--374

    \bibitem[G82]{Garcia} Garcia, A.: \emph{Semigroups associated to singular points of plane curves}. J. für die reine und angewandte Math. 336 (1982), 165--184


    \bibitem[K70]{K70} Kunz, E.: \emph{The Value-Semigroup of a One-Dimensional Gorenstein Ring}. Proceedings of the American Mathematical Society, Vol. 25, No. 4 (Aug., 1970), pp. 748-751 (4 pages)

    \bibitem[K96]{kaenders} Kaenders, K.: \emph{The Seifert form of a plane curve singularity determines its intersection multiplicities} Indagationes Mathematicae, Volume 7, Issue 2, 17 June 1996, Pages 185-197

     \bibitem[KNS24a]{LCandmult} Kubasch, A. A.; Némethi, A.; Schefler, G.: \emph{Multiplicity and lattice cohomology of plane curve singularities}, Revue Roumaine Mathematiques Pures Appliquees LXIX(2):191-234

    \bibitem[KNS24b]{StrucProp} Kubasch, A. A.; Némethi, A.; Schefler, G.: \emph{Structural properties of the lattice cohomology of curve singularities}, arXiv:2410.00551

    \bibitem[N05]{NOSz} Némethi, A.: On the Ozsváth-Szabó invariant of negative definite plumbed 3-
    manifolds, Geom. Topol. 9 (2005), 125–144.

    \bibitem[N08]{N08} Némethi, A.: \emph{Lattice cohomology of normal surface singularities}. Publ. Res. Inst. Math. Sci. 44(2) (2008), 507–543

    \bibitem[N23]{FiltLC} Némethi, A.: \emph{Filtered lattice homology of curve singularities}, arXiv:2306.13889

    \bibitem[NS25]{NS25} Némethi, A.; Schefler, G.: \emph{Lattice homology of integrally closed modules and Artin algebras}, manuscript in preparation

    \bibitem[W04]{wall} Wall, C.T.C.: \emph{Singular Points of Plane Curves}. London Math. Soc., Student Texts 63, Cambridge University Press (2004)

    \bibitem[Y84]{Y84} Yamamoto, M.: \emph{Classification of isolated algebraic singularities by their Alexander polynomials}, Topology Vol. 23, No.3: 277-287 (1984)


    \bibitem[Z21]{Z21} Zemke, I.: \emph{The equivalence of lattice and Heegaard Floer homology}. arXiv:2111.14962v4 (2021).

    
    
\end{thebibliography}
\end{document}